\documentclass{article}
\usepackage[T1]{fontenc}
\usepackage[utf8]{inputenc}
\usepackage[a4paper, margin=1in]{geometry}

\usepackage{microtype}
\usepackage{graphicx}
\usepackage{subcaption}
\usepackage{booktabs} 

\usepackage[colorlinks,urlcolor=blue,citecolor=blue,linkcolor=blue]{hyperref}

\usepackage{amsmath}
\usepackage{amssymb}
\usepackage{mathtools}
\usepackage{amsthm}

\usepackage[capitalize,noabbrev]{cleveref}

\theoremstyle{plain}
\newtheorem{theorem}{Theorem}[section]
\newtheorem{proposition}[theorem]{Proposition}

\theoremstyle{definition}

\theoremstyle{remark}

\usepackage{float}

\usepackage[authoryear]{natbib}

\usepackage{listings}

\crefname{lstlisting}{code}{codes}
\Crefname{lstlisting}{Code}{Codes}

\lstdefinestyle{pyalgo}{
  language=Python,
  basicstyle=\ttfamily\small,
  keywordstyle=\color{blue},
  commentstyle=\color{gray},
  stringstyle=\color{teal},
  frame=single,
  columns=fullflexible,
  keepspaces=true,
  showstringspaces=false,
}

\usepackage{xcolor}
\usepackage{enumitem}

\newcommand{\R}{\mathbb{R}}
\newcommand{\E}{\mathbb{E}}

\newcommand{\Z}{\mathbb{Z}}
\newcommand{\PP}{\mathbb{P}}
\newcommand{\cF}{\mathcal{F}}

\newcommand{\tr}{\operatorname{trace}}

\newcommand{\loss}{L}
\newcommand{\stepsize}{\alpha}

\newcommand{\proj}{\mathcal{P}}
\newcommand{\sign}{\operatorname{sign}}
\newcommand{\diag}{\operatorname{diag}}

\newcommand{\Lips}{{c}} 
\newcommand{\Sv}{{S}} 
\newcommand{\Svnn}{{\Sigma}} 
\newcommand{\sv}{{s}} 
\newcommand{\svnn}{{\sigma}} 

\newcommand{\GD}{\textsc{GD}}
\newcommand{\Muon}{\textsc{Muon}}

\newcommand{\frob}[1]{\left\lVert #1 \right\rVert_{\mathrm{F}}}

\newcommand{\inner}[2]{\left\langle #1,#2 \right\rangle}

\newcommand{\eps}{\varepsilon}

\title{
Insights on Muon from Simple Quadratics
}

\author{
Antoine Gonon
\and
Andreea-Alexandra Muşat
\and
Nicolas Boumal
\and
\small Institute of Mathematics, EPFL, Switzerland
}

\date{} 

\begin{document}
\maketitle
\begin{abstract}
\Muon{} updates weight matrices along (approximate) polar factors of the gradients and has shown strong empirical performance in large-scale training.
Existing attempts at explaining its performance largely focus on single-step comparisons (on quadratic proxies) and worst-case guarantees that treat the inexactness of the polar-factor as a nuisance ``to be argued away''. 
We show that already on simple strongly convex functions such as $\loss(W)=\tfrac12\|W\|_{\mathrm{F}}^2$, these perspectives are insufficient, suggesting that understanding \Muon{} requires going beyond local proxies and pessimistic worst-case bounds. 
Instead, our analysis exposes two observations that already affect behavior on simple quadratics and are not well captured by prevailing abstractions:
(i) approximation error in the polar step can qualitatively alter discrete-time dynamics and \emph{improve} reachability and finite-time performance---an effect practitioners exploit to tune \Muon{}, but that existing theory largely treats as a pure accuracy compromise; and
(ii) structural properties of the objective affect finite-budget constants beyond the prevailing conditioning-based explanations. 
Thus, any general theory covering these cases must either incorporate these ingredients explicitly or explain why they are irrelevant in the regimes of interest.
\end{abstract}

\paragraph{Code.} \href{https://github.com/agonon/muon-on-quadratics}{github.com/agonon/muon-on-quadratics}.

\section{Introduction}

\Muon{} (MomentUm Orthogonalized by Newton--Schulz)
has gained attention 
as an optimizer for large language models 
by improving training speed 
on GPT-style models 
\cite{jordan2024muon,liu2025muonscalablellmtraining,shah2025practicalefficiencymuon}. 
At a high level,
\Muon{} can be viewed as SGD with Nesterov-momentum, 
except that instead of taking 
a step directly in the direction of the momentum matrix,
it first approximately orthogonalizes it 
using a small number of
Newton--Schulz (polynomial) iterations. 
These iterations approximate the polar factor 
(equivalently, a matrix-sign map),
resulting in an update direction that 
lies approximately on the Stiefel manifold
before a standard step is taken
\cite{jordan2024muon}. Code is provided in \Cref{app:muon-pseudocode}.

This design has motivated a growing theoretical literature 
that views \Muon{}-style updates through the lens of 
spectral-norm steepest descent and linear minimization oracles, 
and studies convergence under smoothness assumptions 
with various idealizations such as exact orthogonalization, 
non-Nesterov momentum,
simplified step-size rules, and locally quadratic models
\cite{pethick2025normconstrainedlmo,
li2025noteconvergencemuon,shen2025convergencemuon,
chen2025muonspectralnorm,kovalev2025understandinggo,
riabinin2025gluon,gruntkowska2025ef21muon,sato2025muonconvergence,nagashima2026improvedconvergenceratesmuon}.
More recent work begins to incorporate \emph{inexact} 
polar steps and quantify how approximation error propagates 
in worst-case bounds
\cite{shulgin2025beyondideal,anon2025newtonschulz,lau2025polargrad}.

Despite this progress, it remains unclear 
what parts of \Muon{}'s behavior are explained by these idealized viewpoints. 
Two themes in particular recur across the literature and practitioner discussions:
(i) analyses that emphasize quadratic or locally quadratic 
behavior and per-step improvement along a spectral/Stiefel direction,
and (ii) analyses that treat projection inexactness 
primarily as a computational cost--accuracy tradeoff.
These themes naturally 
lead to several basic questions that 
are not settled by existing results.

\begin{enumerate}[
  label=\textbf{Q\arabic*:},
  leftmargin=0pt,
  labelindent=0pt,
  labelwidth=!,
  align=left,
  itemsep=0.05em
]
  \item \textbf{
Does \Muon{} actually find the global minimizer of a smooth, 
strongly convex function? 
}
Many guarantees for \Muon{}-like methods 
target stationarity measures over general smooth objectives 
\cite{pethick2025normconstrainedlmo,
li2025noteconvergencemuon,shen2025convergencemuon,
chen2025muonspectralnorm,kovalev2025understandinggo,
riabinin2025gluon,gruntkowska2025ef21muon,sato2025muonconvergence,
shulgin2025beyondideal,anon2025newtonschulz,lau2025polargrad,nagashima2026improvedconvergenceratesmuon}. 
These analyses rely 
on tools from optimization theory 
(e.g., descent lemmas)
that 
are most effective under strong convexity assumptions.
But what if we \emph{do} have strong convexity? 
Can we already guarantee basic properties such as convergence 
to the global minimizer? 
And more quantitatively, 
what advantage does \Muon{} offer over standard methods such as \GD{} 
in terms of convergence speed to a given loss level? 

\item \textbf{
Ignoring computational cost, is an exact polar 
factorization always preferable to an approximate one, 
or can approximations \emph{improve} optimization dynamics?
}
A common modeling choice treats inexactness as an error term 
that monotonically worsens 
a baseline method 
\cite{shulgin2025beyondideal,anon2025newtonschulz,lau2025polargrad},
and much numerical work aims 
to reduce approximation error for a fixed compute budget 
\cite{amsel2025polarexpress,cesista2025muonoptcoeffs,grishina2025chebyshevns,boumal2025polarpoly}. 
Yet \Muon{}'s practical implementation 
\emph{deliberately} uses low-degree 
approximations whose errors are not negligible.
Is approximation merely an implementation compromise, 
or can projection inexactness qualitatively change the discrete-time 
dynamics in a way that can \emph{help} reach small-loss regimes? 

\item 
\textbf{Is it appropriate to assess 
\Muon{} by comparing it to competing algorithms 
one iteration at a time?}
Several analyses compare the per-step decrease obtained 
by the polar/Stiefel direction to that of the Euclidean 
gradient direction under line-search optimal step sizes or quadratic proxies 
\cite{davis2025whenspectral,su2025isotropiccurvature}. 
Does local, ``greedy'' superiority translate into a global convergence?
\end{enumerate}

\textbf{Contributions.} We provide a few answers:
\begin{enumerate}[
  label=\textbf{A\arabic*:},
  leftmargin=0pt,
  labelindent=0pt,
  labelwidth=!,
  align=left,
  itemsep=0.05em
]
  \item \textbf{
For \Muon{} to minimize even a strongly convex loss, its step sizes must vanish---this is not the default (\Cref{sec:setup}).}  
We exhibit strongly convex losses 
where, under 
constant step sizes and exact projection, 
the dynamics become confined to discrete lattices 
(``grid confinement''): 
for almost all initializations, 
the iterates never enter sufficiently small loss balls. 
Moreover, even in favorable (reachable) cases, the best possible iteration count to reach loss $\eps$ is $O(1/\sqrt{\eps})$,
in contrast with \GD{}'s $O(\log(1/\eps))$ on the same objective. Thus, \Muon{} has no advantage in 
the asymptotic
$\eps$-dependence. 
Beyond asymptotic rates, we analyze whether finite-time speedups can arise through more favorable big-O constants in ill-conditioned settings, a prevailing hypothesis behind \Muon{}'s practical success \cite{jordan2024muon}. 
While such effects appear in worst-case constructions, we show that in typical quadratic instances, the conditioning alone does not explain finite-budget performance. Rather, finer spectral properties such as spectrum shape also matter. 

\item \textbf{Inexact projection is not just a nuisance (\Cref{sec:noisy}).} 
On a simple quadratic, introducing stochastic perturbations to the polar step restores eventual reachability of small-loss neighborhoods and can reduce the iteration count required to reach a given loss level. That count is not a monotone function of the perturbation magnitude: an intermediate noise level yields the fastest convergence on this toy model.
This demonstrates that, even in a simplified error model, inexact projection can play a constructive algorithmic role rather than acting solely as a computational compromise. For the Newton--Schulz approximation used in practice, experiments on quadratics show that approximation error can either accelerate or slow convergence depending on the spectral structure of the loss, highlighting that approximation error plays a role beyond a simple cost-accuracy tradeoff. 

\item 
\textbf{One-step superiority can be globally misleading (\Cref{sec:line-search}).}
We entertain a greedy policy which, at each step, picks the best update between a gradient step or a Stiefel/polar step, both with their optimal step size.
We exhibit general quadratics such that it is always the Stiefel/polar step that is picked, yet the \GD{} trajectory overall is much faster.
This shows that per-iteration comparisons are not a reliable proxy for end-to-end convergence speed.
\end{enumerate}

\textbf{Take-home message.}
Even on simple strongly convex quadratics, \Muon{} exhibits
non-trivial discrete-time behavior that already acts as a filter as to 
which explanatory narratives can be universally meaningful. 
In particular, our analysis challenges two common viewpoints, stating instead:
(i) local one-step superiority arguments are not reliable predictors of
end-to-end speed, and
(ii) worst-case guarantees that degrade monotonically with polar approximation
error 
do not qualitatively capture the role of inexact projection. 

At the same time, this work brings forward two ingredients that are 
quite overlooked in existing theory but 
that must be confronted by any general explanation of \Muon{}'s behavior. 
First, approximation error in the polar step can play a constructive
role by accelerating convergence. 
Second, while \Muon{} does not improve worst-case 
asymptotic $\eps$-dependence on 
convex objectives, our quadratic experiments show that 
any advantage must come from problem-dependent big-O constants rather than asymptotic rates. 
A prevailing hypothesis behind \Muon{}'s practical success is a form of conditioning insensitivity: 
since \GD{}'s worst-case guarantees deteriorate with the condition number $\kappa$, 
one might expect \Muon{} to win systematically on poorly conditioned problems. 
However, this worst-case dependence of \GD{} on $\kappa$ is not representative of typical quadratic behavior, 
where performance depends on finer spectral properties beyond $\kappa$ alone. 
In our controlled setting, conditioning alone is therefore not predictive: 
at fixed $\kappa$, changing the spectrum shape can flip whether \Muon{} beats \GD{}. 
Moreover, \Muon{}'s loss decrease is comparatively stable across the spectrum families we test, 
so the variation in who wins is largely driven by \GD{}'s sensitivity to finer spectral structure.

\section{Related Work}
\label{sec:related-work} 

A few \emph{recurring methodological lenses} have shaped 
the current understanding of \Muon{}-like methods. 
Existing theoretical work 
either 
(i) analyzes \Muon{}-like updates through smoothness-based descent arguments and norm-constrained subproblems, or
(ii) motivates \Muon{}-like directions via per-iteration improvement on quadratic proxies.
In parallel, numerical work on polar approximations largely treats inexactness as a cost--accuracy knob.
We review these threads through the three aspects we focus on: 
strong convexity, one-step comparisons, and the role of inexact projection.

\textbf{\Muon{} as approximate orthogonalization of momentum.} 
Muon 
(code in \Cref{app:muon-pseudocode})
combines (i) maintaining a Nesterov-style momentum matrix and (ii) applying a small number of GPU-friendly Newton--Schulz / polynomial steps that approximate its polar factor (matrix sign), yielding an update direction that lies approximately on the Stiefel manifold, before a standard step is taken \cite{jordan2024muon}. 
We use the implementation of the nanoGPT speedrun benchmark as a reference point since it keeps track of the latest practical choices that matter for state-of-the-art performance \cite{nanogpt24speedrun}. 
In particular, we will see that a theoretical challenge posed by this implementation is that it uses non-vanishing step sizes (see \Cref{sec:setup}), even though this implies  inherent obstructions to convergence already on simple strongly convex functions. 
Subsequent empirical studies explore how \Muon{} compares to AdamW and SGD-momentum across architectures and scales, and how implementation choices such as normalization, weight decay, learning-rate schedules, and the number/degree of polar steps affect outcomes \cite{liu2025muonscalablellmtraining,shah2025practicalefficiencymuon}.
These empirical works make it clear that \Muon{} 
is defined not only by a direction (polar/Stiefel) 
but by a well-tuned combination of 
step-size, momentum, and projection inexactness, 
which motivates asking which components are essential for qualitative behavior.

\textbf{Smoothness-based analyses via norm-constrained subproblems and descent lemmas.}
A large body of theoretical work studies \Muon{} updates by interpreting them as solutions of spectral-norm constrained subproblems 
that would be solved at each gradient step
(LMOs, steepest descent in non-Euclidean norms, or related trust-region steps). The same works then apply standard smoothness tools (descent lemmas based on Lipschitz gradients) to derive worst-case guarantees over general smooth objectives 
\cite{pethick2025normconstrainedlmo,
li2025noteconvergencemuon,shen2025convergencemuon,
chen2025muonspectralnorm,kovalev2025understandinggo,
riabinin2025gluon,gruntkowska2025ef21muon,sato2025muonconvergence,nagashima2026improvedconvergenceratesmuon}. 
A representative object is the linear minimization oracle for a norm $\|\cdot\|$,
\[
\textrm{LMO}(D) \in \textrm{argmin}_{\|V\|\le 1}\ \inner{D}{V},
\]
for which the spectral-norm LMO is given by the negative polar factor $-UV^\top$ when $D=U\Sv V^\top$ is an SVD \cite{pethick2025normconstrainedlmo}.
This is precisely the matrix-sign / polar-factor direction that \Muon{} approximates.

These analyses typically yield \emph{stationarity-type} conclusions under standard assumptions (Lipschitz gradients, access to unbiased stochastic gradients with bounded variance), and often consider idealized update rules (exact projection, no momentum or non-Nesterov momentum, and simplified step-size rules such as vanishing ones)
\cite{pethick2025normconstrainedlmo,
li2025noteconvergencemuon,shen2025convergencemuon,
chen2025muonspectralnorm,kovalev2025understandinggo,
riabinin2025gluon,gruntkowska2025ef21muon,sato2025muonconvergence,nagashima2026improvedconvergenceratesmuon}.
Because the underlying toolkit is built around smoothness and descent, it is natural to ask what these guarantees imply in the most favorable setting 
these tools have been designed for---\emph{strong convexity}. 
This leads directly to \textbf{Q1}: if we \emph{do} have a smooth, strongly convex objective, does \Muon{} necessarily converge to the global minimizer in discrete time under standard (non-vanishing, as in the nanoGPT speedrun contest \cite{nanogpt24speedrun}) step sizes?
What is the best possible asymptotic dependence of the time-to-$\eps$ loss on $\eps$? 

\textbf{Quadratic proxies and one-step improvement arguments.}
A separate thread uses quadratic or locally quadratic models to justify when spectral/Stiefel directions should offer better \emph{one-step} decrease than Euclidean gradients, often emphasizing conditioning and alignment effects
\cite{davis2025whenspectral,su2025isotropiccurvature}.
This perspective is compelling because it connects \Muon{}-style directions to an interpretable per-iteration metric.
At the same time, it naturally raises \textbf{Q3}: is it appropriate to assess \Muon{} by comparing it to competing algorithms one iteration at a time?
In particular, does greedy one-step superiority reliably translate into faster end-to-end convergence along the full trajectory? 

\textbf{Inexact projection.}
Muon's projection is an approximate polar decomposition / matrix-sign computation implemented with a small number of Newton--Schulz-like or polynomial iterations.
A rapidly growing literature aims at improving these approximations for a fixed compute budget, including polynomial design, stability/precision engineering 
\cite{amsel2025polarexpress,grishina2025chebyshevns,cesista2025muonoptcoeffs,lau2025polargrad,boumal2025polarpoly,kang2025psdpoly}.
These approaches mainly treat approximation quality as a cost--accuracy knob. 

On the theory side, recent works begin to model \emph{inexact} LMOs / inexact polar steps and track how an error parameter propagates through worst-case bounds
\cite{shulgin2025beyondideal,anon2025newtonschulz,lau2025polargrad}.
This is an important step toward realism, but the prevailing abstraction still treats inexactness as a nuisance term that monotonically worsens guarantees relative to the exact-projection baseline.

Muon's practice, however, deliberately operates in a regime where the polar step is low-degree and the error is not negligible \cite{jordan2024muon,liu2025muonscalablellmtraining,shah2025practicalefficiencymuon}.
Moreover, several works explicitly tune approximation mechanisms to improve optimization speed rather than to minimize approximation error per se
\cite{ahn2025dion,khaled2025muonbp,he2025lowrankmuon}.
This naturally motivates \textbf{Q2}: if we set aside computational cost, is an exact polar factor always preferable, or can inexact projection qualitatively change discrete-time dynamics in a way that can \emph{help} optimization?

\section{Exact Projection with Fixed Step Sizes: Grid Confinement, Rates, and What They (Don't) Explain}
\label{sec:setup}

This section isolates a single design choice that is often idealized in theory:
\emph{exact} Stiefel/polar projection (equivalently, an exact spectral-norm LMO / exact matrix sign),
combined with fixed step sizes.
Our goal is not to advocate this idealization, but to understand precisely what it implies---and what it fails to imply---already on smooth strongly convex objectives. 
We ask:

\begin{quote}
\textbf{Q1}: \emph{With exact projection and fixed step sizes, does a \Muon{}-style update necessarily reach the global minimizer of a smooth, strongly convex objective? If it can reach a loss level $\eps$, what is the best possible dependence on $\eps$? Besides 
the dependence on $\eps$, what could explain speedups in practice---perhaps through conditioning-dependent constants?}
\end{quote}

For reference, the (usually unique) polar factor of a matrix $M \in \mathbb{R}^{d_1 \times d_2}$ with SVD
\begin{align}
  M = U \Svnn V^\top \qquad \textrm{ is } \qquad \proj(M) = UV^\top,
  \label{eq:polardef}
\end{align}
where $d = \min(d_1, d_2)$, matrices $U \in \mathbb{R}^{d_1 \times d}, V \in \mathbb{R}^{d_2 \times d}$ have orthonormal columns, and $\Svnn = \mathrm{diag}(\svnn_1, \ldots, \svnn_d)$ holds the singular values $\svnn_1 \geq \cdots \geq \svnn_d \geq 0$.

\subsection{Global convergence to minimizer?}
\label{sec:setup-minimizer}
We start with convergence to the global minimizer.  
We exhibit a simple counterexample where, for generic initializations, the iterates become confined to discrete lattices and miss sufficiently small loss balls, even on simple strongly convex quadratics. 
We then explain how this is reflected in 
existing stationarity-type bounds through an irreducible residual term. 
The only remedy 
to have a generic convergence result 
is to let the effective step size vanish. 

Consider the isotropic quadratic loss
\begin{equation}
  \loss(W) = \frac12 \frob{W}^2,
  \quad W\in\R^{d\times d},
  \label{eq:isotropic-quadratic-setup}
\end{equation}
which is smooth and strongly convex, with $\nabla \loss(W)=W$.
The exact Stiefel/polar update (no momentum) 
with constant step size $\stepsize>0$ 
is
\begin{equation}
  W_{t+1} = W_t - \stepsize\, \proj(W_t).
  \label{eq:exact-stiefel-update-setup}
\end{equation}
If $W_0 = U\Sv_0V^\top$ with $U, V$ orthogonal and $\Sv_0$ diagonal with entries $\sv_{0,1},\ldots,\sv_{0,d}$ (not necessarily nonnegative), then $\proj(W_0) = U \sign(\Sv_0) V^\top$, where $\sign(\Sv_0)$ is diagonal with entries $\sign(\sv_{0,i})$.
Thus, $W_1 = W_0 - \stepsize\,\proj(W_0) = U(\Sv_0 - \stepsize\,\sign(\Sv_0))V^\top$: the matrices $U, V$ remain unchanged.
By induction, $W_t = U \Sv_t V^\top$ for all $t$, with $\Sv_t = \diag(\sv_{t,1},\ldots,\sv_{t,d})$ following simple dynamics ($d$ separate scalar recursions):
\begin{equation}
  \sv_{t+1,i} = \sv_{t,i} - \stepsize\,\sign(\sv_{t,i}).
  \label{eq:singular-sign}
\end{equation}
Since $L(W_t) = \frac12 \sum_{i=1}^d \sv_{t,i}^2$,
the overall dynamics reduce to 
$d$ independent copies of 1-D sign-GD: 
\begin{equation}
  \ell(s)=\frac12 s^2,
  \qquad
  \sv_{t+1} = \sv_t - \stepsize\,\sign(\sv_t).
  \label{eq:1d-sign-setup}
\end{equation}
This trajectory remains on the lattice $\sv_0+\stepsize\Z$.
Starting from $\sv_0>0$, it decreases by $\stepsize$ until it crosses $0$, and then it falls into a $2$-cycle between the two lattice points closest to $0$.
Consequently, unless $\sv_0\in \stepsize\Z$ (an unlikely condition under generic initialization), 
the iterates never reach arbitrarily small neighborhoods of the minimizer. 
On the matrix quadratic \eqref{eq:isotropic-quadratic-setup}, the same grid confinement occurs simultaneously across all singular values via \eqref{eq:singular-sign}, so the obstruction persists, even though the loss is smooth and strongly convex. 

\textbf{Momentum does not break grid confinement.} 
There are multiple natural ways to combine momentum with Stiefel/polar steps; we summarize the main variants and their relation to \Muon{} in \Cref{app:momentum}.
\Muon{}'s official implementation  
computes a momentum term $m_{t,i}$ \emph{before} projection,
and takes a step along the projected momentum:
\begin{equation}
  \sv_{t+1,i} = \sv_{t,i} - \stepsize\,\sign(m_{t,i}),
  \label{eq:momentum-sign-grid}
\end{equation}
so $\sv_{t,i}\in \sv_{0,i}+\stepsize\Z$ for all $t$.
Thus, the same lattice obstruction persists. 
This answers the first part of \textbf{Q1} negatively for exact projection with fixed step sizes, with or without \Muon{}-style momentum.

\textbf{Implications for worst-case smooth theory: 
where grid confinement appears in stationarity results.} 
Despite the negative reachability result above, 
there are many existing stationarity-type guarantees for \Muon{} 
that establish convergence to stationary points. 
We now explain how grid confinement manifests 
in these results, and how these results implicitly acknowledge
the need for vanishing step sizes to 
converge to stationarity. 

For a constant step size $\stepsize$, 
simple algebraic manipulations of standard descent-lemma arguments yield
\begin{equation}
  \left[ \min_{0\le t\le T-1}\|\nabla \loss(W_t)\|_\ast \right]
  \ \le\
  \frac{\loss(W_0)-\loss_\ast}{T\stepsize}
  \;+\;
  \frac{\Lips}{2}\,d\,\stepsize.
  \label{eq:stationarity-constant}
\end{equation}
It is valid for any $\Lips$-smooth loss $\loss \colon \R^{d_1\times d_2}\to\R$, 
assuming $T$ steps from initialization $W_0$, 
with exact projection and fixed step size $\stepsize>0$ (see \Cref{app:stationarity}). 
Here, $\|\cdot\|_\ast$ is the nuclear norm, $d=\min(d_1,d_2)$, and $\loss_\ast=\inf_W \loss(W)$. 

Variants of this type of bound appear as an intermediary step 
in many (if not all) existing stationarity analyses for \Muon{} 
\cite{pethick2025normconstrainedlmo,
li2025noteconvergencemuon,shen2025convergencemuon,
chen2025muonspectralnorm,kovalev2025understandinggo,
riabinin2025gluon,gruntkowska2025ef21muon,sato2025muonconvergence,
shulgin2025beyondideal,anon2025newtonschulz,lau2025polargrad,nagashima2026improvedconvergenceratesmuon}. 
As $T\to\infty$, the first term vanishes but the second remains: the best stationarity level one can guarantee is $O(\stepsize)$ unless $\stepsize\to 0$.
This is the analytic ``shadow'' of grid confinement: fixed step sizes induce a non-vanishing residual term.  
On losses also satisfying a P\L{} inequality, 
this translates into a residual suboptimality gap $L(W_t)-L_\ast$ of order $O(\stepsize^2)$, consistent with what we 
saw 
on $\loss(W)=\frac12\|W\|_{\mathrm{F}}^2$ where the floor loss is $\Theta(\stepsize^2)$. 
Because of that, 
existing analyses typically assume vanishing step sizes 
in their final statements to eliminate this irreducible term and to obtain a convergence result.

Beyond worst-case smooth theory, 
\citet{ma2026preconditioningbenefitsspectralorthogonalization} 
study specific quadratic and quartic losses. 
By restricting to specific objectives, 
they obtain tighter convergence rates. 
Their analysis relies on (approximate) decoupling along singular directions, as in \eqref{eq:singular-sign}. 
That setting must confront the same discrete-time issue highlighted above, and indeed the analysis adopts vanishing step-size schedules to obtain convergence to the minimizer.
Our point here is not to compare assumptions, but to emphasize the structural fact:
\emph{with exact projection, non-vanishing step sizes 
are fundamentally incompatible with generic minimizer reachability}.

\subsection{Best possible $\eps$-dependence under fixed steps: no better than $O(1/\sqrt{\eps})$}
\label{sec:setup-eps}

Suppose we ignore the reachability obstruction (e.g., by assuming a lucky initialization that aligns with the lattice, or by asking only for the fastest possible approach to an $\eps$-scale neighborhood before cycling).
We now ask the second part of \textbf{Q1}:
\emph{does \Muon{} have an asymptotically better dependence on $\eps$ than \GD{} on smooth strongly convex objectives, under exact projection and fixed step sizes?}

Answer: \textbf{No}, it is at best $O(1/\sqrt{\eps})$ on a generic strongly convex quadratic, which is worse than \GD{}'s $O(\log(1/\eps))$. 

To see that it cannot be better than $O(1/\sqrt{\eps})$, consider again the isotropic quadratic \eqref{eq:isotropic-quadratic-setup} and its decoupled singular-value dynamics \eqref{eq:singular-sign}. 
Each step on $\ell(\sv)=\frac12 \sv^2$ 
decreases the magnitude of each singular value 
$\sv_t$ 
by exactly $\stepsize$ until the first sign flip 
where it enters a $2$-cycle. 
Cycling occurs after $t=\Theta(|\sv_0|/\stepsize)$ steps. 
The 2-cycle oscillates between values of magnitude
at most $\stepsize$, 
hence the loss at that point is at most $\stepsize^2/2$. 
For the loss to be at most $\eps$, we need $\stepsize = O(\sqrt{\eps})$, resulting in a time-to-$\eps$ loss of $\Theta(1/\sqrt{\eps})$ steps. 
This occurs simultaneously across all singular values via \eqref{eq:singular-sign}, so the same scaling applies to the 
full loss $\loss(W)=\frac12\|W\|_{\mathrm{F}}^2$. 

\textbf{Contrast with \GD{}.}
On the same quadratic, \GD{}
converges in one step with step size $1$. 
More generally, $O(1/\eps)$ steps are enough on any smooth \emph{convex} objective, given that the step size is small enough (e.g., $1/L$ for $L$-smooth functions), and $O(\log(1/\eps))$ steps suffice under \emph{strong} convexity. 

\subsection{If not $\eps$-dependence, then what? Conditioning and spectrum shape at a fixed budget}
\label{sec:setup-conditioning}

Sections~\ref{sec:setup-minimizer} and~\ref{sec:setup-eps} rule out a general explanation of \Muon{}'s success based purely on asymptotic $\eps$-dependence under exact projection and fixed step sizes. 
A remaining hypothesis is \emph{finite-budget advantage}:
for a fixed iteration budget $T$, \Muon{}-style updates might enjoy better problem-dependent constants than \GD{} in the big-O rates.

A common practitioner narrative emphasizes \emph{ill-conditioning}~\cite{jordan2024muon}:
since \GD{}'s worst-case guarantees deteriorate with the condition number $\kappa$,
while Stiefel/polar updates can achieve rates independent of $\kappa$ 
on certain quadratics and quartics \cite{ma2026preconditioningbenefitsspectralorthogonalization}, 
one might expect \Muon{} to systematically outperform \GD{} in poorly conditioned cases.

However, several logical gaps remain.
First, $\kappa$-insensitivity of \Muon{} in specific worst-case analyses 
does not imply insensitivity to finer spectral structure at fixed $\kappa$.
Second, although the worst-case dependence of \GD{} on $\kappa$ is well understood, 
average-case analyses indicate that the performance of first-order methods on quadratics generally depends
on the \emph{full spectrum} of the Hessian rather than on $\kappa$ alone~\cite{pedregosa20avgcasequadratic,cunha22avgcaseuniversality}.
What is thus not clear a priori is how sensitive each method is to spectrum shape at fixed $\kappa$, 
and whether these variations are large enough to alter the \emph{relative ordering} between \Muon{} and \GD{}. 
We therefore turn to the following question:
\begin{quote}
\emph{Can we construct equally ill-conditioned problems for which the ranking between \Muon{} and \GD{} flips?}
\end{quote}
A positive answer would imply that conditioning alone is not predictive of the relative performance of \Muon{} and \GD{} on generic quadratics.

\textbf{Experimental setup.}
We consider quadratics of the form
\begin{equation}
  \loss(W)
  \;=\;
  \tfrac12\langle W, A W\rangle + \langle B, W\rangle + c.
  \label{eq:quad-xp}
\end{equation}
All matrices are square of size $n\times n$, with $n=100$. 
We set $A = Q S Q^\top$, with $Q$ a random orthogonal matrix obtained as the $Q$ factor
of a QR decomposition of a matrix with i.i.d.\ $\mathcal N(0,1)$ entries, 
and $S=\mathrm{diag}(s_1,\dots,s_n)$ set according to $7$ different spectrum shapes 
(uniform, Gaussian, spiked, etc.; see \Cref{fig:eig-distributions-main}).
All spectrum families share the same endpoints $(s_{\min},s_{\max})$ and hence the
same condition number $\kappa=s_{\max}/s_{\min}$. 
For each quadratic instance, we sample $100$ random initializations
$[W_0]_{ij}\overset{\text{i.i.d.}}{\sim}\mathcal N(0,1/n)$ and run each optimizer for $T=500$ iterations.
Learning rates are selected by choosing the run that achieves the smallest loss
up to time $T$. Full details are given in
\Cref{app:controlled-spectrum}.

\begin{figure*}[t]
  \centering
  \includegraphics[width=0.98\linewidth]{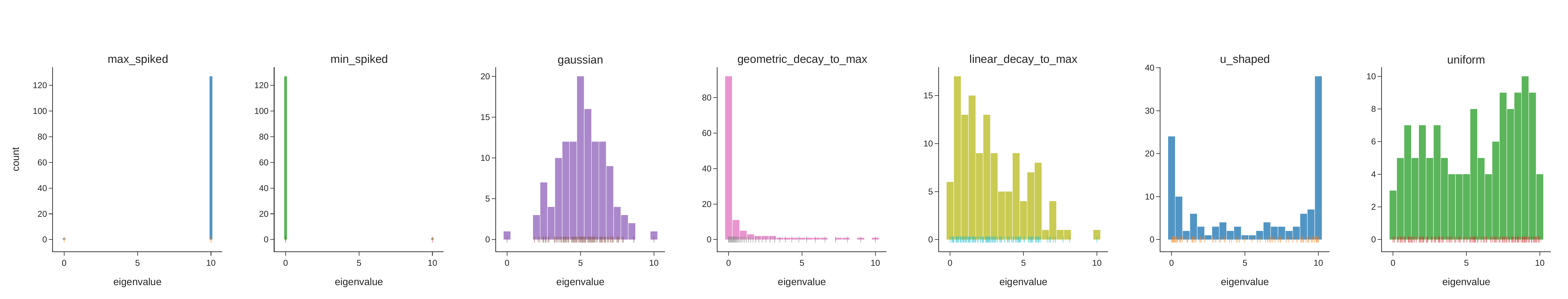}
  \caption{Eigenvalue distributions for the controlled-spectrum families. All families share the same endpoints $(s_{\min},s_{\max})=(10^{-3},10)$ and thus the same condition number $\kappa=s_{\max}/s_{\min} = 10^4$, but have different spectrum shapes.}
  \label{fig:eig-distributions-main}
\end{figure*}

\textbf{Main result: at fixed conditioning, 
\Muon{} can either outperform or underperform \GD{} depending on spectrum shape.}
We consider quadratic instances with a large Hessian condition number, $\kappa=10^4$.
Across all runs, the gradient condition numbers observed along \GD{} trajectories
lie in a comparable range, between $10^4$ and $10^8$
(see \Cref{app:grad-conditioning}).
\Cref{tab:exact-muon-vs-gd-win} reports, for each spectrum family, the fraction of
initializations for which \emph{exact-projection \Muon{}} (without momentum)
achieves a smaller loss than \GD{} at 
$t\in\{T/10,T/2,T\}$ over $100$ random initializations.
The results are binary: for each spectrum shape, \Muon{} either
consistently outperforms \GD{} or consistently underperforms it.
Since all instances share the same Hessian condition number and include
\GD{} runs with similar gradient conditioning (\Cref{app:grad-conditioning}), 
these results show that
\emph{conditioning alone does not determine which method is faster}
even in this quadratic setting. 
Despite identical $\kappa$, varying only the spectrum shape is sufficient to flip the ranking between \Muon{} and \GD{}.
Any general explanation of \Muon{}'s advantage must therefore account for finer spectral structure,
or explain why it becomes irrelevant in the regimes of interest.

\begin{table}[t]
\centering
\small
\setlength{\tabcolsep}{3pt}
\renewcommand{\arraystretch}{1.15}
\caption{Win rates for exact-projection \Muon{} vs.\ \GD{} (comparing the best loss achieved by each algorithm across all learning rates, up to time $t$); a ``1'' means \Muon{} always wins. All spectra share the same endpoints $(10^{-3}, 10^1)$ and condition number $\kappa=10^4$.}
\label{tab:exact-muon-vs-gd-win}
\begin{tabular}{lccc}
\toprule
kind & $t=T/10$ & $t=T/2$ & $t=T$ \\
\midrule
\texttt{max\_spiked} & 0 & 0 & 0 \\
\texttt{min\_spiked} & 1 & 1 & 1 \\
\texttt{uniform} & 0 & 0 & 0 \\
\texttt{gaussian} & 0 & 0 & 0 \\
\texttt{linear\_decay\_to\_max} & 0 & 0 & 0 \\
\texttt{u\_shaped} & 0 & 0 & 1 \\
\texttt{geometric\_decay\_to\_max} & 1 & 1 & 1 \\
\bottomrule
\end{tabular}
\end{table}

\textbf{Magnitude of loss reduction at fixed $\kappa$.}
To quantify this effect more directly, 
we measure how many orders of magnitude each method reduces the loss from the same initialization, averaged over runs. 
\Cref{fig:improvement-bars} shows that there is at least one order of magnitude difference in loss reduction between \Muon{} and \GD{} 
in all cases. 
Moreover, 
\Muon{} consistently reduces the loss by a similar number of orders of magnitude across all spectrum shapes.
In contrast, \GD{}'s reduction varies substantially with the spectrum shape.
Thus, at fixed $\kappa$, the flip in ranking is primarily 
driven by the fact that \GD{}'s progress varies substantially with the spectral distribution,
while \Muon{} remains comparatively stable across the spectra we test.

\textbf{Relation to spectra in neural networks.} 
We note that the spectrum shapes that favor \Muon{} here, those with a 
bulk of small eigenvalues and a few large ones (\texttt{min\_spiked} 
and 
\texttt{geometric\_decay\_to\_max}), resemble Hessian spectra reported in neural networks 
\cite{sagun2017singularityhessian,sagun2018empiricalhessianminspiked,ghorbani2019hessianeigvaldensity,li2019hessianbasedanalysissgd,gur-ari2019gradient,papyan2020hessiancharacterization,song2025doessgdreallyhappenintinysubspaces,tang2025powerlawhessianspectrum},
though we cannot draw definitive conclusions beyond this qualitative similarity. 

\begin{figure}[t]
  \centering
  \includegraphics[width=0.95\linewidth]{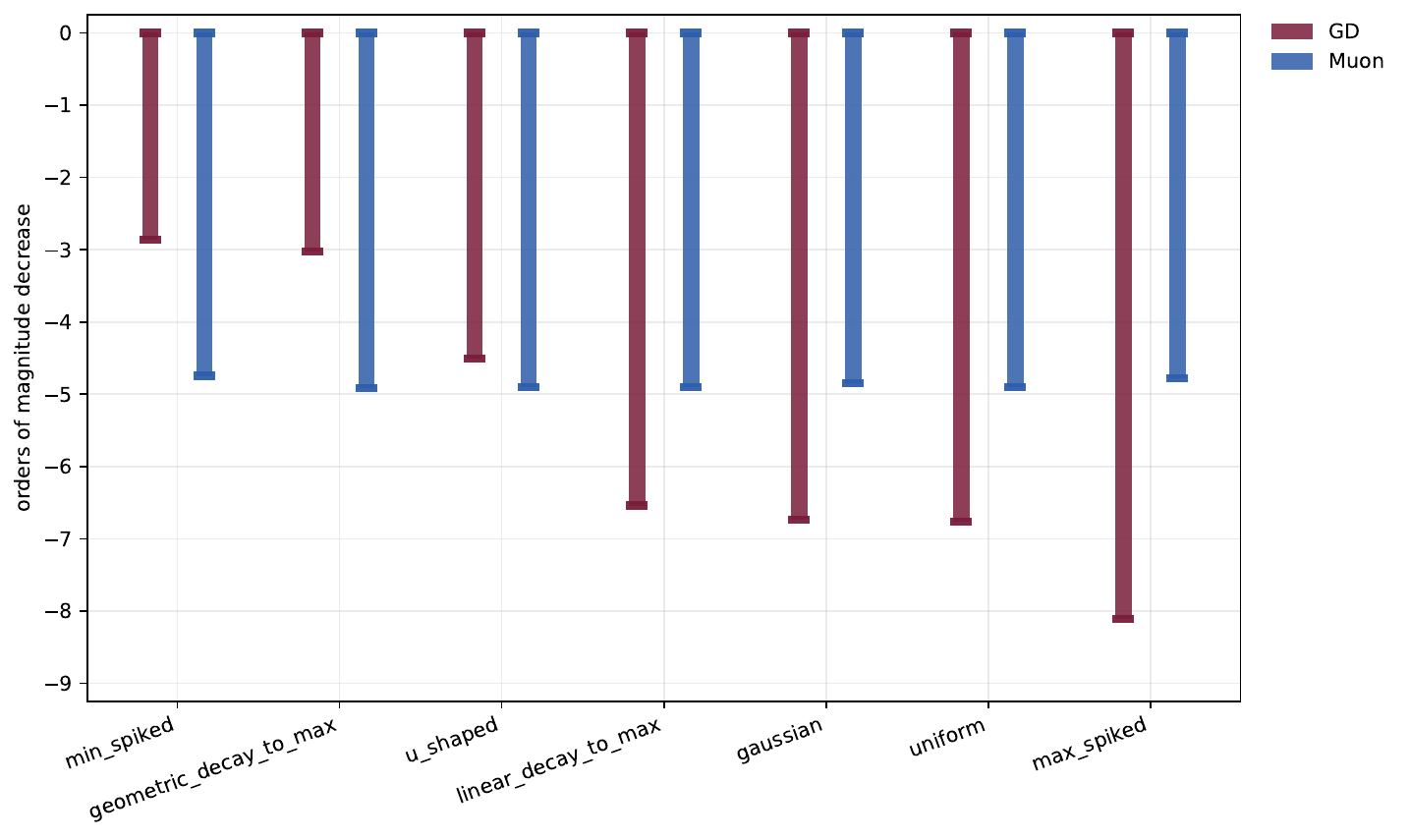}
\caption{
Orders of magnitude of loss decrease after $T=500$ iterations on the controlled-spectrum quadratic family (\Cref{sec:setup-conditioning}).
For each spectrum shape (all sharing the same endpoints and condition number $\kappa=10^4$),
bars are aligned at the common initial loss so that their lengths represent the logarithmic decrease achieved.
The vertical axis is indexed by integers $-1,-2,\ldots$, corresponding to factors of $10^{-1},10^{-2},\ldots$ reduction.
\Muon{} achieves a comparable reduction across spectrum families,
whereas \GD{} exhibits variation,
which shows that the flip in ranking is primarily driven by the fact that the performance of \GD{} depends on the spectral distribution, 
while \Muon{} remains comparatively stable across the spectra we test. 
For the \texttt{max\_spiked} family, \GD{}'s loss is numerically very close to $0$.
For plot readability, we clip the corresponding bar at roughly $-8$ orders of magnitude.
}

  \label{fig:improvement-bars}
\end{figure}

\begin{figure*}[t]
  \centering
  \begin{subfigure}[t]{0.44\linewidth}
    \centering
    \includegraphics[width=\linewidth]{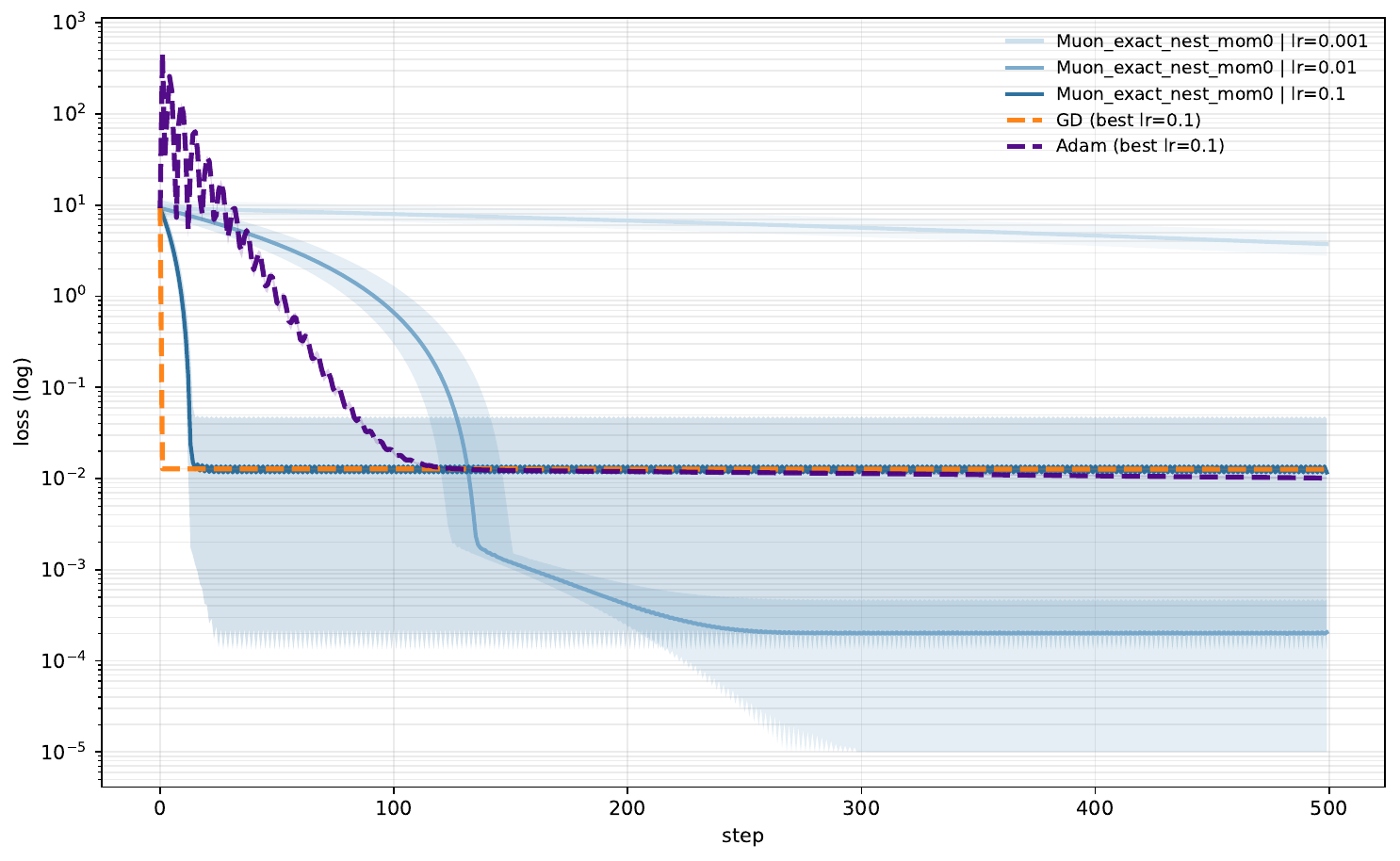}
    \caption{\texttt{min\_spiked} spectrum (favorable to \Muon{}).}
    \label{fig:avg-traj-minspike}
  \end{subfigure}\hfill
  \begin{subfigure}[t]{0.44\linewidth}
    \centering
    \includegraphics[width=\linewidth]{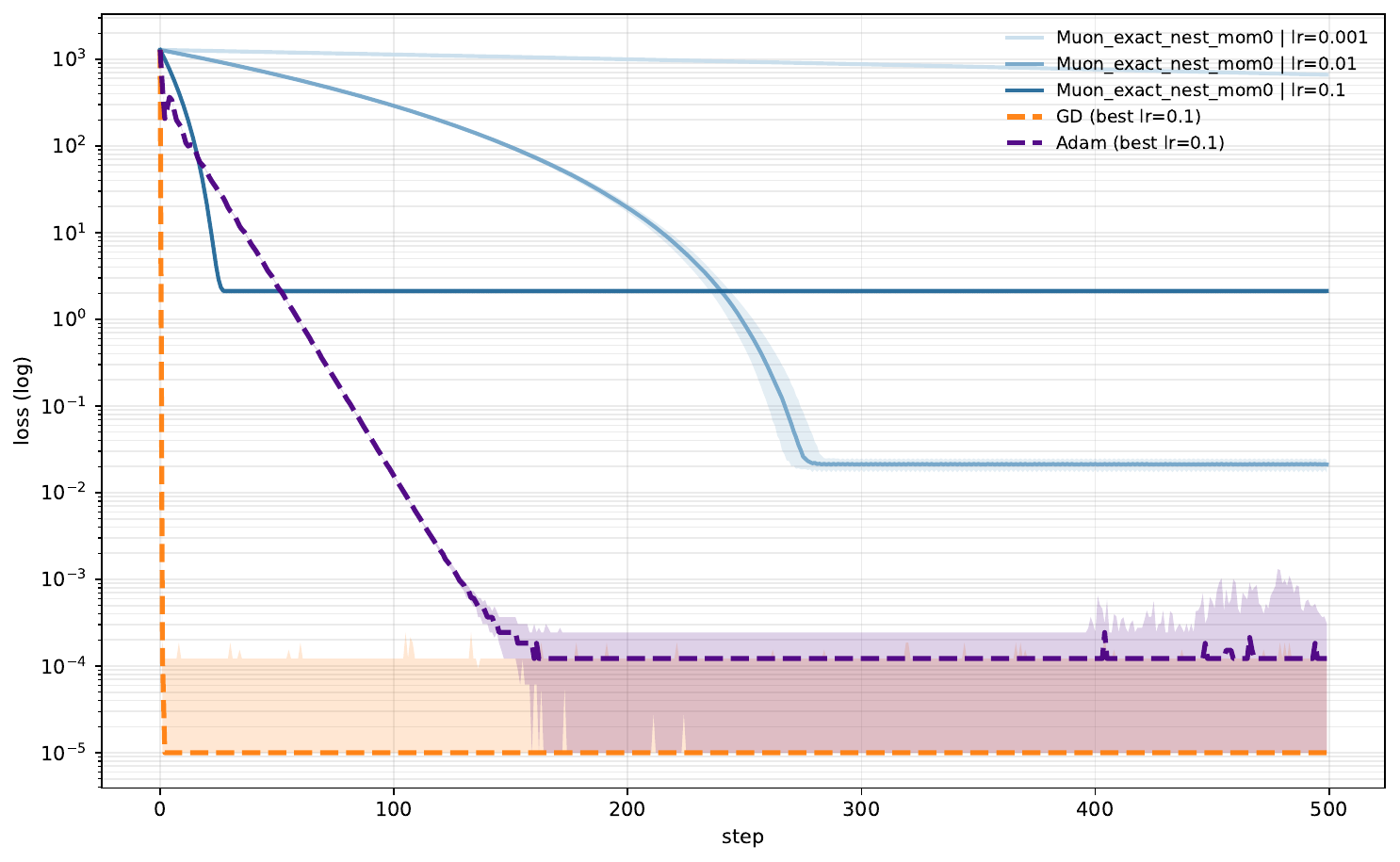}
    \caption{\texttt{max\_spiked} spectrum (unfavorable to \Muon{}).}
    \label{fig:avg-traj-maxspike}
  \end{subfigure}
  \caption{
  Averaged loss curves with shaded bands covering $95\%$ of trajectories
  for the quadratic objective
$\loss(W)=\tfrac12\langle W, A W\rangle + \langle B, W\rangle + c$,
averaged over $100$ random initializations
$[W_0]_{ij}\overset{\text{i.i.d.}}{\sim}\mathcal N(0,1/n)$.
Across panels, the matrix $A$ has the same condition number but different spectrum shapes (see \Cref{fig:eig-distributions-main}). 
On the \texttt{min\_spiked} spectrum, the loss achieved by \GD{} is numerically very close to zero.
We clip it at $10^{-5}$ for plot readability.
}
  \label{fig:avg-traj-spiked}
\end{figure*}

\textbf{Ablations: vanishing step sizes, other \Muon{} variants, and sample trajectories.} 
(i) \Cref{app:controlled-spectrum-magnitude} reports the average loss ratios at different steps $t\in\{T/10,T/2,T\}$, 
complementing the final time $t=T$ results in \Cref{fig:improvement-bars}. 
We also include in \Cref{app:controlled-spectrum-bars} the same plot as \Cref{fig:improvement-bars} but 
including the initial loss level for each spectrum family, 
instead of just the reduction achieved from it. 
(ii) \Cref{app:controlled-spectrum-trajectories} shows representative \emph{sample trajectories} per spectrum family 
to complement the averaged ones in \Cref{fig:avg-traj-spiked}. 
(iii) The same appendix includes ablations with vanishing learning-rate schedules and
different \Muon{} variants (with or without Nesterov-style momentum and
with exact or Newton--Schulz approximate projection). 
While vanishing step sizes affect late-stage dynamics, 
and momentum or approximate projection mainly influence how noisy the loss curves are,
they do not change the qualitative conclusion on these quadratics:  
conditioning alone does not explain \Muon{}'s speed relative to \GD{}. 
The same ranking across spectra is observed across these variants.

\section{Approximate Projection Can Change the Dynamics (and Sometimes the Speed)}
\label{sec:noisy}

The previous section analyzed an idealization common in theory:
\emph{exact} polar projection with a fixed step size.
In practice, however, \Muon{} does \emph{not} compute a polar factor exactly: it applies a low-degree Newton--Schulz-type polynomial (plus implementation choices such as rescaling and momentum).
A widespread modeling stance treats this inexactness as a nuisance term that monotonically worsens guarantees relative to the exact baseline.
But, is it?

\begin{quote}
\textbf{Q2}: 
\emph{Already on simple strongly convex objectives, can approximate projection qualitatively change reachability and change iteration counts?}
\end{quote}

\subsection{A minimal model: perturbed sign dynamics}

On the isotropic quadratic $\loss(W)=\tfrac12\|W\|_{\mathrm{F}}^2$, exact Stiefel updates reduce to coordinate-wise sign dynamics on singular values (\Cref{sec:setup-minimizer}). 
An approximate polar factor replaces each singular value $\sv_{t,i}$ by 
its sign plus a distortion:
\begin{equation}
  \sv_{t+1,i}
  \;=\;
  \sv_{t,i} - \stepsize\bigl(\sign(\sv_{t,i}) + \eta_{t,i}\bigr).  
  \label{eq:approx-sign-generic}
\end{equation}
In practice, \Muon{} approximates the $\sign$ function 
by one that maps most of $[0,1]$ to $[1-\delta,1+\delta]$, with $\delta\approx0.3$. This means $|\eta_{t,i}| \lesssim 0.3$ in practice. These distortions $\eta_{t,i}$ are neither independent across $i$ nor across $t$. 
Indeed, all singular values are first jointly mapped to $[0,1]$ before applying the polynomial approximation (\Cref{alg:muon-aux}). 
This global 
rescaling step 
creates dependencies across $i$.
Moreover, the optimizer correlates the iterates over time. 

Here, we take a first step to analyze 
and isolate the \emph{role} of inexactness 
by studying a simplified perturbation model:
\begin{equation}
  \sv_{t+1}
  \;=\;
  \sv_t - \stepsize\bigl(\sign(\sv_t) + \sigma \xi_{t+1}\bigr),
  \label{eq:noisy-sign}
\end{equation}
where $(\xi_t)_t$ are i.i.d.\ standard Gaussians $\mathcal{N}(0,1)$, and $\sigma \geq 0$ controls the perturbation magnitude. 

\subsection{Noise breaks confinement on the isotropic quadratic, and typical hitting times can be non-monotone}

Define the hitting time of an $\eps$-loss for $\ell(\sv)=\tfrac12 \sv^2$ by
\[
T_\eps^{(\sigma)} := \inf\{t\ge 0:\ \ell(\sv_t)\le \eps\},
\]
where $(\sv_t)_{t\geq0}$ follows~\eqref{eq:approx-sign-generic} with some given $\sv_0$.
For $\sigma=0$, the trajectory remains on a lattice $\sv_0+\stepsize\Z$ and generically falls into a two-cycle around $0$, making sufficiently small loss levels unreachable. 
A first key observation is that noise breaks this lattice confinement: as soon as $\sigma>0$, 
reachability is restored almost surely; see~\Cref{prop:noise-breaks-grid} for a precise statement and~\Cref{sec:noise-breaks-grid} for a proof.

Beyond reachability, we consider a practical proxy: 
the distribution of $T_\eps^{(\sigma)}$ as a function of $\sigma$. 
On the 1D toy model \eqref{eq:noisy-sign}, 
the typical scale of $T_\eps^{(\sigma)}$ deteriorates both as $\sigma\to 0$ 
(recovering near-deterministic cycling) and as $\sigma\to\infty$ (diffusive overshoot dominates). 
Indeed, it must be greater than any fixed $t$ with high probability in both limits:
$\PP(T_\eps^{(\sigma)} > t)\to 1$ as $\sigma\to 0$ or $\sigma\to\infty$ for any fixed $t$;
see~\Cref{prop:small-large-noise-long-hitting} and~\Cref{sec:small-large-noise-long-hitting}.

In experiments, we report 
the typical iteration count needed to reach $\eps$-loss (median of $T_\eps^{(\sigma)}$) 
over $10^4$ independent runs. 
For reference, the baseline $\lceil \sv_0/\stepsize \rceil$ corresponds to the number of steps  
at which the noiseless trajectory first crosses $0$ and enters the cycling phase, 
and thus represents the fastest possible approach to $\eps$-loss \emph{without} noise. 

\Cref{fig:median-T} shows that the iteration count is \emph{non-monotone} in
$\sigma$: 
there is a ``sweet spot'', an intermediate noise level,  
which improves which losses are reachable in a fixed iteration count.  
Noise is therefore \emph{beneficial} on this toy model.




\begin{figure}[t]
  \centering
  \includegraphics[width=0.65\linewidth]{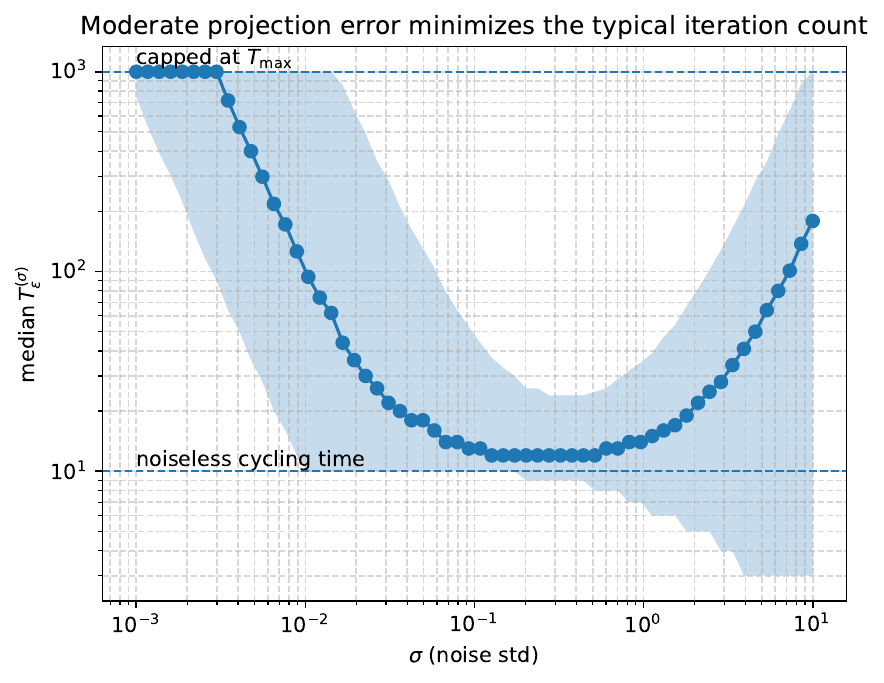}
  \caption{Median $\eps$-hitting time exhibits a similar ``sweet spot'' where moderate 
  projection error $\sigma$ in \eqref{eq:noisy-sign} accelerates convergence. 
  $T_\eps^{(\sigma)}$ capped at $T_{\max}=1000$ steps (top dashed line).
  Noiseless cycling time $|\sv_0|/\stepsize$ shown for reference (bottom dashed line). 
The shaded band shows the central $95\%$ interval of $T_\eps^{(\sigma)}$ across runs. 
Details in \Cref{app:noisy-sign-xps}. 
  }
  \label{fig:median-T}
\end{figure}

\subsection{Quadratic evidence with actual Newton--Schulz projection: approximation can change who wins}

The synthetic perturbation model above isolates a mechanism, 
but Newton--Schulz approximation is not i.i.d.\ noise.
We therefore also test with the actual Newton--Schulz approximate projection on 
the same controlled-spectrum quadratic family as in \Cref{sec:setup-conditioning}.
We compare two \Muon{} variants: \Muon{} with exact projection 
vs.\ \Muon{} with Newton--Schulz approximate projection 
(under no momentum and constant step sizes; details in \Cref{app:controlled-spectrum-ns}). 

\Cref{tab:ns-vs-exact-win} reports win rates (over random initializations) for \Muon{} 
with Newton--Schulz approximate projection vs.\ \Muon{} with exact projection at milestones $t\in\{T/10,T/2,T\}$.
The outcome is \emph{not uniform across spectra}:
in the most favorable shape for \Muon{} in \Cref{sec:setup-conditioning} (min-spiked), 
the approximate method tends to reach smaller losses by time $T$,
while on many other shapes it does not. 

\begin{table}[t]
\centering
\small
\setlength{\tabcolsep}{4pt}
\renewcommand{\arraystretch}{1.15}
\caption{Win rates for \Muon{} with Newton--Schulz approximate projection vs.\ \Muon{} with exact projection, comparing the best loss achieved across learning rates up to time $t$. Spectrum naming matches \Cref{fig:eig-distributions}; see \Cref{app:controlled-spectrum-ns} for exact definitions and hyperparameters.}
\label{tab:ns-vs-exact-win}
\begin{tabular}{lccc}
\toprule
kind & $t=T/10$ & $t=T/2$ & $t=T$ \\
\midrule
\texttt{max\_spiked} & 0 & 0 & 0.92 \\
\texttt{min\_spiked} & 0.46 & 0.57 & 0.94 \\
\texttt{uniform} & 0 & 0 & 0 \\
\texttt{gaussian} & 0 & 0 & 0 \\
\texttt{linear\_decay\_to\_max} & 0 & 0 & 0 \\
\texttt{u\_shaped} & 0 & 0 & 0 \\
\texttt{geometric\_decay\_to\_max} & 0 & 0 & 0 \\
\bottomrule
\end{tabular}
\end{table}

\Cref{app:controlled-spectrum-ns} complements \Cref{tab:ns-vs-exact-win} with 
mean best-loss ratios: when approximate projection wins, 
it does so by a significant margin in best-loss value (e.g., $30\times$ smaller), 
otherwise it remains within a factor $2$ of the exact method. 
Taken together, these experiments support that: 
\emph{approximate projection can matter for iteration complexity, but 
whether it helps or not depends on the setup}. 
This makes it difficult to justify a universal 
``approximation is worse / better'' abstraction, 
and it motivates more fine-grained theory of inexact polar steps that goes 
beyond monotone degradation with an error magnitude. 
In particular, what are the effects of 
different 
error profiles 
(e.g. bounded vs.\ heavy-tailed, 
centered vs.\ biased) \cite{amsel2025polarexpress}? 

\section{One-Step Improvement on Quadratics is Not a Reliable Proxy for End-to-End Speed}
\label{sec:line-search}
A recurring narrative (cf.\ \textbf{Q3}) is that \Muon{}-like directions are preferable because, on a quadratic or locally quadratic model, they can achieve a larger \emph{per-iteration} decrease than the Euclidean gradient direction when paired with a suitable step size (often motivated by line search on a quadratic proxy) \cite{davis2025whenspectral,su2025isotropiccurvature}.
This type of argument is appealing because it is local, interpretable, and easy to validate numerically on a single step. 
Here, we aim at clarifying what local analysis can and cannot certify about full trajectories in general. We observe the following:

\begin{quote}
\emph{Even on quadratics---the setting most favorable to 
existing one-step proxy arguments---greedy per-step superiority does not necessarily imply faster end-to-end convergence.}
\end{quote}

\textbf{Setup.}
Consider the quadratic $\loss(W) = \frac12 \langle W, A W\rangle$. 
At an iterate $W$, we compare two update directions:
the Euclidean gradient direction $D_{\rm GD}(W)\;=\;\nabla \loss(W)$, 
and the Stiefel/polar direction
$
D_{\rm St}(W)\;=\;\proj(\nabla \loss(W))$. 
To avoid privileging either method through a step-size choice, we let each direction take its own \emph{exact} line-search step size 
$\alpha^\star(D) \in \arg\min_{\alpha\in\R} \loss(W - \alpha D)$. The associated one-step 
decrease is $\Delta(D) \coloneqq \loss(W) - \loss(W - \alpha^\star(D) D)$. 
The local narrative ``Stiefel is better than \GD{}'' corresponds to regimes where $\Delta(D_{\rm St}) > \Delta(D_{\rm GD})$.

\textbf{Observation: local advantage can coexist with global disadvantage.}
There exist quadratics and initializations $W_0$ such that 
a \emph{greedy} policy that, at each iterate,
chooses between a gradient step and a Stiefel/polar step, each with its own 
\emph{optimal} line-search step size, 
yet has a worse end-to-end loss decrease than \GD{} with line search. 
\Cref{fig:counterexample-quad-main} illustrates this 
(and \Cref{app:exact-1d-line-search} reports how the greedy policy 
also loses on the gradient norm and distance to the minimizer). 

For more general objectives, the message is the same: 
extra care is needed when one-step comparisons are made via 
quadratic proxies (e.g., a Taylor expansion or a local quadratic upper bound): 
we just saw that even on true quadratics, where there is no proxy gap, 
\emph{one-step superiority does not imply end-to-end speed-up}. 
This suggests that one-step improvement arguments should come 
with additional structure that rules out the exhibited phenomena. 

\begin{figure}[t]
  \centering
  \includegraphics[width=0.65\linewidth]{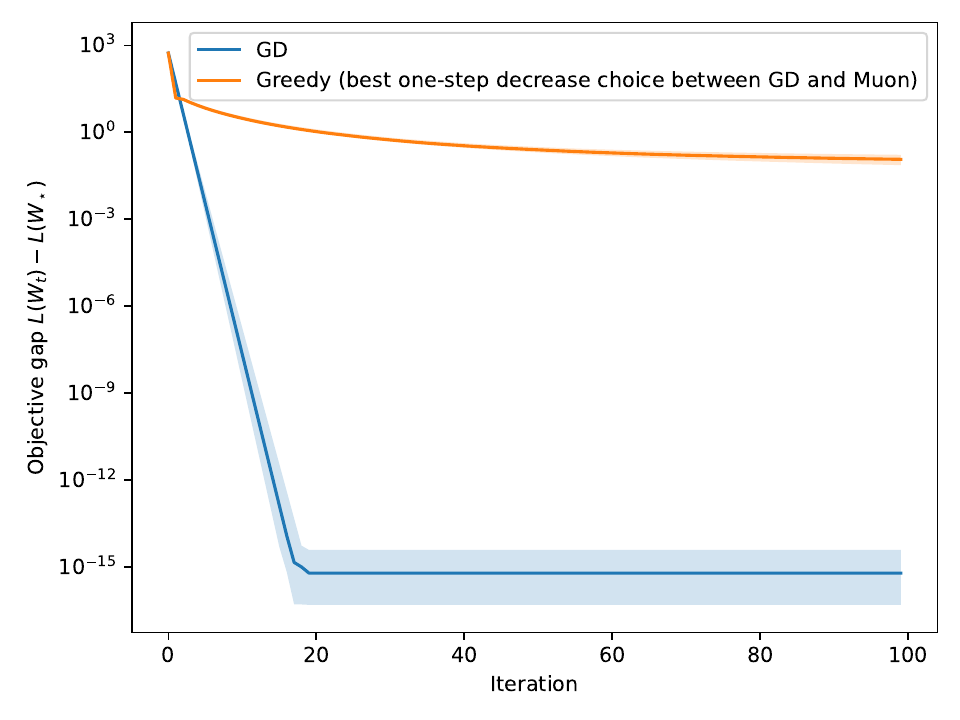}
\caption{
Objective gap $\loss(W_t)-\loss(W_\star)$ for the quadratic $\loss(W)=\tfrac12 \langle W, A W\rangle$ (all matrices are $n\times n$, $n=100$).
Here $A=QS Q^\top$ with $Q$ random orthogonal and $S=\diag(1,\ldots,1,10^3)$, so $\kappa=10^3$.
We compare \GD{} (exact line search along the Euclidean gradient) to a \emph{greedy} policy that, 
at each iteration, picks the direction (gradient or Stiefel/polar) yielding the larger exact line-search decrease.
On this instance, the greedy policy selects the Stiefel/polar step at every iteration, yet \GD{} converges much faster end-to-end.
Curves show the median across $100$ random initializations $W_0$ with i.i.d.\ entries $(W_0)_{ij}\sim\mathcal N(0,1/n)$; shaded bands contain the central $95\%$ of runs.
}
  \label{fig:counterexample-quad-main}
\end{figure}

\section{Conclusion}

This work highlights that the behavior of \Muon{} is governed by
discrete-time phenomena that are not yet fully understood,
even on simple strongly convex quadratics. 

Addressing these aspects theoretically likely requires new tools:
one-step local proxies 
are generally not sufficient to predict end-to-end speed-up,
and worst-case guarantees can yield pessimistic predictions
that degrade monotonically with approximation error.

The analysis suggests that at least two ingredients deserve finer modeling: 
(i) the spectral properties of the loss beyond mere conditioning, 
affecting big-O constants; and
(ii) the approximation error in the polar projection step,
which qualitatively alters reachability and can accelerate hitting times.
Open questions remain on how best to characterize these aspects theoretically.

\section*{Acknowledgements}

This work was supported in part by the Swiss 
State Secretariat for Education, Research and Innovation (SERI)
under contract number MB22.00027.

\bibliographystyle{plainnat}
\bibliography{references}

@misc{jordan2024muon,
  author       = {Keller Jordan},
  title        = {{M}uon: An optimizer for the hidden layers in neural networks},
  howpublished = {\url{https://kellerjordan.github.io/posts/muon/}},
  year         = {2024},
  note         = {Accessed January 2026}
}

@article{liu2025muonscalablellmtraining,
  author       = {Jingyuan Liu and
                  Jianlin Su and
                  Xingcheng Yao and
                  Zhejun Jiang and
                  Guokun Lai and
                  Yulun Du and
                  Yidao Qin and
                  Weixin Xu and
                  Enzhe Lu and
                  Junjie Yan and
                  Yanru Chen and
                  Huabin Zheng and
                  Yibo Liu and
                  Shaowei Liu and
                  Bohong Yin and
                  Weiran He and
                  Han Zhu and
                  Yuzhi Wang and
                  Jianzhou Wang and
                  Mengnan Dong and
                  Zheng Zhang and
                  Yongsheng Kang and
                  Hao Zhang and
                  Xinran Xu and
                  Yutao Zhang and
                  Yuxin Wu and
                  Xinyu Zhou and
                  Zhilin Yang},
  title        = {{M}uon is Scalable for {LLM} Training},
  journal      = {CoRR},
  volume       = {abs/2502.16982},
  year         = {2025},
  url          = {https://doi.org/10.48550/arXiv.2502.16982},
  doi          = {10.48550/ARXIV.2502.16982},
  eprinttype    = {arXiv},
  eprint       = {2502.16982},
  bibsource    = {dblp computer science bibliography, https://dblp.org}
}

@misc{nagashima2026improvedconvergenceratesmuon,
      title={Improved Convergence Rates of {M}uon Optimizer for Nonconvex Optimization}, 
      author={Shuntaro Nagashima and Hideaki Iiduka},
      year={2026},
      eprint={2601.19400},
      archivePrefix={arXiv},
      primaryClass={math.OC},
      url={https://arxiv.org/abs/2601.19400}, 
}

@inproceedings{pethick2025normconstrainedlmo,
  author       = {Thomas Pethick and
                  Wanyun Xie and
                  Kimon Antonakopoulos and
                  Zhenyu Zhu and
                  Antonio Silveti{-}Falls and
                  Volkan Cevher},
  title        = {Training Deep Learning Models with Norm-Constrained LMOs},
  booktitle    = {Forty-second International Conference on Machine Learning, {ICML}
                  2025, Vancouver, BC, Canada, July 13-19, 2025},
  publisher    = {OpenReview.net},
  year         = {2025},
  url          = {https://openreview.net/forum?id=2Oqm2IzTy9},
  bibsource    = {dblp computer science bibliography, https://dblp.org}
}

@misc{tuddenham2022orthogonalisinggradientsspeedneural,
      title={Orthogonalising gradients to speed up neural network optimisation}, 
      author={Mark Tuddenham and Adam Prügel-Bennett and Jonathan Hare},
      year={2022},
      eprint={2202.07052},
      archivePrefix={arXiv},
      primaryClass={cs.LG},
      url={https://arxiv.org/abs/2202.07052}, 
}

@misc{ma2026preconditioningbenefitsspectralorthogonalization,
      title={Preconditioning Benefits of Spectral Orthogonalization in {M}uon}, 
      author={Jianhao Ma and Yu Huang and Yuejie Chi and Yuxin Chen},
      year={2026},
      eprint={2601.13474},
      archivePrefix={arXiv},
      primaryClass={cs.LG},
      url={https://arxiv.org/abs/2601.13474}, 
}

@misc{li2025noteconvergencemuon,
      title={A Note on the Convergence of {M}uon}, 
      author={Jiaxiang Li and Mingyi Hong},
      year={2025},
      eprint={2502.02900},
      archivePrefix={arXiv},
      primaryClass={math.OC},
      url={https://arxiv.org/abs/2502.02900}, 
}

@article{riabinin2025gluon,
  author       = {Artem Riabinin and
                  Egor Shulgin and
                  Kaja Gruntkowska and
                  Peter Richt{\'{a}}rik},
  title        = {Gluon: Making {M}uon {\&} {S}cion Great Again! (Bridging Theory and
                  Practice of {LMO}-based Optimizers for {LLM}s)},
  journal      = {CoRR},
  volume       = {abs/2505.13416},
  year         = {2025},
  url          = {https://doi.org/10.48550/arXiv.2505.13416},
  doi          = {10.48550/ARXIV.2505.13416},
  eprinttype    = {arXiv},
  eprint       = {2505.13416},
  bibsource    = {dblp computer science bibliography, https://dblp.org}
}

@article{shen2025convergencemuon,
  author       = {Wei Shen and
                  Ruichuan Huang and
                  Minhui Huang and
                  Cong Shen and
                  Jiawei Zhang},
  title        = {On the Convergence Analysis of {M}uon},
  journal      = {CoRR},
  volume       = {abs/2505.23737},
  year         = {2025},
  url          = {https://doi.org/10.48550/arXiv.2505.23737},
  doi          = {10.48550/ARXIV.2505.23737},
  eprinttype    = {arXiv},
  eprint       = {2505.23737},
  bibsource    = {dblp computer science bibliography, https://dblp.org}
}

@article{chen2025muonspectralnorm,
  author       = {Lizhang Chen and
                  Jonathan Li and
                  Qiang Liu},
  title        = {{M}uon Optimizes Under Spectral Norm Constraints},
  journal      = {CoRR},
  volume       = {abs/2506.15054},
  year         = {2025},
  url          = {https://doi.org/10.48550/arXiv.2506.15054},
  doi          = {10.48550/ARXIV.2506.15054},
  eprinttype    = {arXiv},
  eprint       = {2506.15054},
  bibsource    = {dblp computer science bibliography, https://dblp.org}
}

@misc{sato2025muonconvergence,
      title={Convergence Bound and Critical Batch Size of {M}uon Optimizer}, 
      author={Naoki Sato and Hiroki Naganuma and Hideaki Iiduka},
      year={2025},
      eprint={2507.01598},
      archivePrefix={arXiv},
      primaryClass={cs.LG},
      url={https://arxiv.org/abs/2507.01598}, 
}

@article{papyan2020hessiancharacterization,
  author  = {Vardan Papyan},
  title   = {Traces of Class/Cross-Class Structure Pervade Deep Learning Spectra},
  journal = {Journal of Machine Learning Research},
  year    = {2020},
  volume  = {21},
  number  = {252},
  pages   = {1--64},
  url     = {http://jmlr.org/papers/v21/20-933.html}
}

@InProceedings{ghorbani2019hessianeigvaldensity,
  title = 	 {An Investigation into Neural Net Optimization via Hessian Eigenvalue Density},
  author =       {Ghorbani, Behrooz and Krishnan, Shankar and Xiao, Ying},
  booktitle = 	 {Proceedings of the 36th International Conference on Machine Learning},
  pages = 	 {2232--2241},
  year = 	 {2019},
  editor = 	 {Chaudhuri, Kamalika and Salakhutdinov, Ruslan},
  volume = 	 {97},
  series = 	 {Proceedings of Machine Learning Research},
  month = 	 {09--15 Jun},
  publisher =    {PMLR},
  url = 	 {https://proceedings.mlr.press/v97/ghorbani19b.html}
}

@misc{sagun2017singularityhessian,
author       = {Levent Sagun and
                  L{\'{e}}on Bottou and
                  Yann LeCun},
title={Eigenvalues of the Hessian in Deep Learning: Singularity and Beyond},
journal      = {CoRR},
year={2017},
url={https://arxiv.org/abs/1611.07476},
  eprinttype    = {arXiv},
  eprint       = {1611.07476},
}

@inproceedings{sagun2018empiricalhessianminspiked,
  author       = {Levent Sagun and
                  Utku Evci and
                  V. Ugur G{\"{u}}ney and
                  Yann N. Dauphin and
                  L{\'{e}}on Bottou},
  title        = {Empirical Analysis of the Hessian of Over-Parametrized Neural Networks},
  booktitle    = {6th International Conference on Learning Representations, {ICLR} 2018,
                  Vancouver, BC, Canada, April 30 - May 3, 2018, Workshop Track Proceedings},
  publisher    = {OpenReview.net},
  year         = {2018},
  url          = {https://openreview.net/forum?id=rJO1_M0Lf},
  bibsource    = {dblp computer science bibliography, https://dblp.org}
}

@inproceedings{
tang2025powerlawhessianspectrum,
title={Investigating the Overlooked Hessian Structure: From {CNN}s to {LLM}s},
author={Qian-Yuan Tang and Yufei Gu and Yunfeng Cai and Mingming Sun and Ping Li and zhou Xun and Zeke Xie},
booktitle={Forty-second International Conference on Machine Learning},
year={2025},
url={https://openreview.net/forum?id=o62ZzfCEwZ}
}

@book{LeGall2022,
  author    = {{Le Gall}, Jean-Fran\c{c}ois},
  title     = {Measure Theory, Probability, and Stochastic Processes},
  series    = {Graduate Texts in Mathematics},
  volume    = {295},
  publisher = {Springer Cham},
  year      = {2022},
  isbn      = {978-3-031-14205-5},
  doi       = {10.1007/978-3-031-14205-5},
  url       = {https://link.springer.com/book/10.1007/978-3-031-14205-5},
}

@article{shah2025practicalefficiencymuon,
  author       = {Ishaan Shah and
                  Anthony M. Polloreno and
                  Karl Stratos and
                  Philip Monk and
                  Adarsh Chaluvaraju and
                  Andrew Hojel and
                  Andrew Ma and
                  Anil Thomas and
                  Ashish Tanwer and
                  Darsh J. Shah and
                  Khoi Nguyen and
                  Kurt Smith and
                  Michael Callahan and
                  Michael Pust and
                  Mohit Parmar and
                  Peter Rushton and
                  Platon Mazarakis and
                  Ritvik Kapila and
                  Saurabh Srivastava and
                  Somanshu Singla and
                  Tim Romanski and
                  Yash Vanjani and
                  Ashish Vaswani},
  title        = {Practical Efficiency of {M}uon for Pretraining},
  journal      = {CoRR},
  volume       = {abs/2505.02222},
  year         = {2025},
  url          = {https://doi.org/10.48550/arXiv.2505.02222},
  doi          = {10.48550/ARXIV.2505.02222},
  eprinttype    = {arXiv},
  eprint       = {2505.02222},
  bibsource    = {dblp computer science bibliography, https://dblp.org}
}

@InProceedings{cunha22avgcaseuniversality,
  title = 	 {Only tails matter: Average-Case Universality and Robustness in the Convex Regime},
  author =       {Cunha, Leonardo and Gidel, Gauthier and Pedregosa, Fabian and Scieur, Damien and Paquette, Courtney},
  booktitle = 	 {Proceedings of the 39th International Conference on Machine Learning},
  pages = 	 {4474--4491},
  year = 	 {2022},
  volume = 	 {162},
  series = 	 {Proceedings of Machine Learning Research},
  month = 	 {17--23 Jul},
  publisher =    {PMLR},
  url = 	 {https://proceedings.mlr.press/v162/cunha22a.html}
}

@misc{li2019hessianbasedanalysissgd,
      title={Hessian based analysis of SGD for Deep Nets: Dynamics and Generalization}, 
      author={Xinyan Li and Qilong Gu and Yingxue Zhou and Tiancong Chen and Arindam Banerjee},
      year={2019},
      eprint={1907.10732},
      archivePrefix={arXiv},
      primaryClass={cs.LG},
      url={https://arxiv.org/abs/1907.10732}, 
}

@misc{gur-ari2019gradient,
title={Gradient Descent Happens in a Tiny Subspace},
author={Guy Gur-Ari and Daniel A. Roberts and Ethan Dyer},
year={2019},
url={https://openreview.net/forum?id=ByeTHsAqtX},
}

@inproceedings{song2025doessgdreallyhappenintinysubspaces,
title={Does {SGD} really happen in tiny subspaces?},
author={Minhak Song and Kwangjun Ahn and Chulhee Yun},
booktitle={The Thirteenth International Conference on Learning Representations},
year={2025},
url={https://openreview.net/forum?id=v6iLQBoIJw}
}

@InProceedings{pedregosa20avgcasequadratic,
  title = 	 {Acceleration through spectral density estimation},
  author =       {Pedregosa, Fabian and Scieur, Damien},
  booktitle = 	 {Proceedings of the 37th International Conference on Machine Learning},
  pages = 	 {7553--7562},
  year = 	 {2020},
  volume = 	 {119},
  series = 	 {Proceedings of Machine Learning Research},
  month = 	 {13--18 Jul},
  publisher =    {PMLR},
  url = 	 {https://proceedings.mlr.press/v119/pedregosa20a.html}
}

@article{kovalev2025understandinggo,
  author       = {Dmitry Kovalev},
  title        = {Understanding Gradient Orthogonalization for Deep Learning via Non-Euclidean
                  Trust-Region Optimization},
  journal      = {CoRR},
  volume       = {abs/2503.12645},
  year         = {2025},
  url          = {https://doi.org/10.48550/arXiv.2503.12645},
  doi          = {10.48550/ARXIV.2503.12645},
  eprinttype    = {arXiv},
  eprint       = {2503.12645},
  bibsource    = {dblp computer science bibliography, https://dblp.org}
}

@article{shulgin2025beyondideal,
  author       = {Egor Shulgin and
                  Sultan AlRashed and
                  Francesco Orabona and
                  Peter Richt{\'{a}}rik},
  title        = {Beyond the Ideal: Analyzing the Inexact {M}uon Update},
  journal      = {CoRR},
  volume       = {abs/2510.19933},
  year         = {2025},
  url          = {https://doi.org/10.48550/arXiv.2510.19933},
  doi          = {10.48550/ARXIV.2510.19933},
  eprinttype    = {arXiv},
  eprint       = {2510.19933},
  bibsource    = {dblp computer science bibliography, https://dblp.org}
}

@article{gruntkowska2025ef21muon,
  author       = {Kaja Gruntkowska and
                  Alexander Gaponov and
                  Zhirayr Tovmasyan and
                  Peter Richt{\'{a}}rik},
  title        = {Error Feedback for {M}uon and Friends},
  journal      = {CoRR},
  volume       = {abs/2510.00643},
  year         = {2025},
  url          = {https://doi.org/10.48550/arXiv.2510.00643},
  doi          = {10.48550/ARXIV.2510.00643},
  eprinttype    = {arXiv},
  eprint       = {2510.00643},
  bibsource    = {dblp computer science bibliography, https://dblp.org}
}

@inproceedings{anon2025newtonschulz,
title={Convergence of {M}uon with {N}ewton--{S}chulz},
author={Kim, Gyu Yeol and Oh, {Min-hwan}},
booktitle={The Fourteenth International Conference on Learning Representations},
year={2026},
url={https://openreview.net/forum?id=lJSfxtLpLm}
}

@article{davis2025whenspectral,
  author       = {Damek Davis and
                  Dmitriy Drusvyatskiy},
  title        = {When do spectral gradient updates help in deep learning?},
  journal      = {CoRR},
  volume       = {abs/2512.04299},
  year         = {2025},
  url          = {https://doi.org/10.48550/arXiv.2512.04299},
  doi          = {10.48550/ARXIV.2512.04299},
  eprinttype    = {arXiv},
  eprint       = {2512.04299},
  bibsource    = {dblp computer science bibliography, https://dblp.org}
}

@article{su2025isotropiccurvature,
  author       = {Weijie Su},
  title        = {Isotropic Curvature Model for Understanding Deep Learning Optimization:
                  Is Gradient Orthogonalization Optimal?},
  journal      = {CoRR},
  volume       = {abs/2511.00674},
  year         = {2025},
  url          = {https://doi.org/10.48550/arXiv.2511.00674},
  doi          = {10.48550/ARXIV.2511.00674},
  eprinttype    = {arXiv},
  eprint       = {2511.00674},
  bibsource    = {dblp computer science bibliography, https://dblp.org}
}

@article{amsel2025polarexpress,
  author       = {Noah Amsel and
                  David Persson and
                  Christopher Musco and
                  Robert Gower},
  title        = {The Polar Express: Optimal Matrix Sign Methods and Their Application
                  to the {M}uon Algorithm},
  journal      = {CoRR},
  volume       = {abs/2505.16932},
  year         = {2025},
  url          = {https://doi.org/10.48550/arXiv.2505.16932},
  doi          = {10.48550/ARXIV.2505.16932},
  eprinttype    = {arXiv},
  eprint       = {2505.16932},
  bibsource    = {dblp computer science bibliography, https://dblp.org}
}

@misc{kang2025psdpoly,
      title={Factorization-free Orthogonal Projection onto the Positive Semidefinite Cone with Composite Polynomial Filtering}, 
      author={Shucheng Kang and Haoyu Han and Antoine Groudiev and Heng Yang},
      year={2025},
      eprint={2507.09165},
      archivePrefix={arXiv},
      primaryClass={math.OC},
      url={https://arxiv.org/abs/2507.09165}, 
}

@misc{nanogpt24speedrun,
  author       = {Keller Jordan and Jeremy Bernstein and Brendan Rappazzo and
                  @fernbear.bsky.social and Boza Vlado and You Jiacheng and
                  Franz Cesista and Braden Koszarsky and @Grad62304977},
  title        = {modded-nanogpt: Speedrunning the NanoGPT baseline},
  year         = {2024},
  url          = {https://github.com/KellerJordan/modded-nanogpt},
  note         = {Accessed January 13th, 2026}
}

@article{grishina2025chebyshevns,
  author       = {Ekaterina Grishina and
                  Matvey Smirnov and
                  Maxim V. Rakhuba},
  title        = {Accelerating {N}ewton-{S}chulz Iteration for Orthogonalization via {C}hebyshev-type
                  Polynomials},
  journal      = {CoRR},
  volume       = {abs/2506.10935},
  year         = {2025},
  url          = {https://doi.org/10.48550/arXiv.2506.10935},
  doi          = {10.48550/ARXIV.2506.10935},
  eprinttype    = {arXiv},
  eprint       = {2506.10935},
  bibsource    = {dblp computer science bibliography, https://dblp.org}
}

@misc{cesista2025muonoptcoeffs,
  author = {Franz Louis Cesista and You Jiacheng and Keller Jordan},
  title = {{S}queezing 1-2\% Efficiency Gains Out of {M}uon by Optimizing the {N}ewton-{S}chulz {C}oefficients},
  year = {2025},
  month = {February},
  day = {21},
  url = {https://leloykun.github.io/ponder/muon-opt-coeffs/},
  note         = {Accessed January 2026}
}

@article{lau2025polargrad,
  author       = {Tim Tsz{-}Kit Lau and
                  Qi Long and
                  Weijie Su},
  title        = {PolarGrad: {A} Class of Matrix-Gradient Optimizers from a Unifying
                  Preconditioning Perspective},
  journal      = {CoRR},
  volume       = {abs/2505.21799},
  year         = {2025},
  url          = {https://doi.org/10.48550/arXiv.2505.21799},
  doi          = {10.48550/ARXIV.2505.21799},
  eprinttype    = {arXiv},
  eprint       = {2505.21799},
  bibsource    = {dblp computer science bibliography, https://dblp.org}
}

@online{boumal2025polarpoly,
  author = {Boumal, Nicolas and Gonon, Antoine},
  title = {Designing Polynomials for {Muon's} Polar Factorization: Focus
    on Quintics},
  date = {2025-06-30},
  year = {2025},
  url = {www.racetothebottom.xyz/posts/polar-poly/},
  langid = {en},
  note = {Accessed January 2026}

}

@article{ahn2025dion,
      title={Dion: Distributed Orthonormalized Updates}, 
      author={Kwangjun Ahn and Byron Xu and Natalie Abreu and Ying Fan and Gagik Magakyan and Pratyusha Sharma and Zheng Zhan and John Langford},
      year={2025},
      eprint={2504.05295},
      archivePrefix={arXiv},
      primaryClass={cs.LG},
      url={https://arxiv.org/abs/2504.05295}, 
}

@article{he2025lowrankmuon,
  author       = {Chuan He and
                  Zhanwang Deng and
                  Zhaosong Lu},
  title        = {Low-rank Orthogonalization for Large-scale Matrix Optimization with
                  Applications to Foundation Model Training},
  journal      = {CoRR},
  volume       = {abs/2509.11983},
  year         = {2025},
  url          = {https://doi.org/10.48550/arXiv.2509.11983},
  doi          = {10.48550/ARXIV.2509.11983},
  eprinttype    = {arXiv},
  eprint       = {2509.11983},
  bibsource    = {dblp computer science bibliography, https://dblp.org}
}

@article{khaled2025muonbp,
  author       = {Ahmed Khaled and
                  Kaan Ozkara and
                  Tao Yu and
                  Mingyi Hong and
                  Youngsuk Park},
  title        = {{M}uonBP: Faster {M}uon via Block-Periodic Orthogonalization},
  journal      = {CoRR},
  volume       = {abs/2510.16981},
  year         = {2025},
  url          = {https://doi.org/10.48550/arXiv.2510.16981},
  doi          = {10.48550/ARXIV.2510.16981},
  eprinttype    = {arXiv},
  eprint       = {2510.16981},
  bibsource    = {dblp computer science bibliography, https://dblp.org}
}

\newpage
\appendix
\onecolumn

\section{Muon in the modded-nanoGPT speedrun setup (pseudocode)}
\label{app:muon-pseudocode}

We consider the code used in the \href{https://github.com/KellerJordan/modded-nanogpt/blob/7d502b97dac8b8116b675dd6d670cf3ea8ec0037/train_gpt.py}{nanoGPT speedrun training script}, as it 
stays up-to-date with the latest best practices for performance. A simplified (non-distributed) version is presented in \Cref{alg:muon-speedrun}. In the speedrun setup, \Muon{} is applied primarily to 2D weight matrices
(e.g., linear layers), while other parameters (e.g., embeddings, biases, and
non-matrix tensors) are typically handled by AdamW. 

In particular, 
\Cref{alg:muon-speedrun} and \Cref{alg:muon-aux} 
include for reference the global rescaling applied before the Newton--Schulz-like polynomials (\href{https://github.com/KellerJordan/modded-nanogpt/blob/7d502b97dac8b8116b675dd6d670cf3ea8ec0037/train_gpt.py#L162}{line 162 in the reference code}), the ``Nesterov-style'' momentum 
variant (\href{https://github.com/KellerJordan/modded-nanogpt/blob/7d502b97dac8b8116b675dd6d670cf3ea8ec0037/train_gpt.py#L705}{line 705}), and the step size and
momentum schedules used in the speedrun (\href{https://github.com/KellerJordan/modded-nanogpt/blob/7d502b97dac8b8116b675dd6d670cf3ea8ec0037/train_gpt.py#L1519}{lines 1519} 
and 
\href{https://github.com/KellerJordan/modded-nanogpt/blob/7d502b97dac8b8116b675dd6d670cf3ea8ec0037/train_gpt.py#L355}{355}).

\begin{lstlisting}[style=pyalgo, caption={Muon update as used in the modded-nanoGPT speedrun (simplified, per-matrix view)}, label={alg:muon-speedrun}]

def muon_step(W, grad_W, v_W, t, S, T, lr0, weight_decay):
    # schedules
    lr_t = lr0 * speedrun_lr(t, S)
    mu_t = speedrun_momentum(t, T)

    # Nesterov-style momentum
    v_W = mu_t * v_W + (1 - mu_t) * grad_W #v_W is the momentum buffer kept across steps
    m   = mu_t * v_W + (1 - mu_t) * grad_W

    # approximate polar direction
    u = polar_express(m)

    # rectangular correction factor used in the speedrun
    s = sqrt(max(1, W.shape[0] / W.shape[1]))

    # parameter update
    W = W - lr_t * s * u

    # weight decay (simplified, implemented in code via gated / cautious decay)
    W = W - lr_t * weight_decay * W

    return W, v_W
\end{lstlisting}

\begin{lstlisting}[style=pyalgo, caption={Speedrun schedules and Polar Express orthogonalization}, label={alg:muon-aux}]
def speedrun_lr(step, S, cooldown_frac=0.55):
    x = step / S
    cooldown_weight = max(0.0, min(1.0, (1 - x) / cooldown_frac))
    lr_min = 0.1
    lr_max = 1.0 + 0.52*(x > 1/3) + 0.21*(x > 2/3)
    return lr_min + cooldown_weight*(lr_max - lr_min)

def speedrun_momentum(step, T, warmup=300, cooldown=50):
  warmup_frac = max(0.0, min(1.0, step / warmup))
  cooldown_frac = max(0.0, min(1.0, (T - cooldown - step) / cooldown))
  momentum_min = 0.85
  momentum_max = 0.95
  return momentum_min + min(warmup_frac, cooldown_frac) * (momentum_max - momentum_min)

polar_express_coeffs = [
    (8.156554524902461, -22.48329292557795, 15.878769915207462),
    (4.042929935166739, -2.808917465908714, 0.5000178451051316),
    (3.8916678022926607, -2.772484153217685, 0.5060648178503393),
    (3.285753657755655, -2.3681294933425376, 0.46449024233003106),
    (2.3465413258596377, -1.7097828382687081, 0.42323551169305323)
]

def polar_express(G):
    """
    Polar Express approximate orthogonalization (5 iterations, bf16).
    """
    X = G.to(bfloat16)
    if X.shape[0] > X.shape[1]:
        X = X.T

    X = X / (1.02 * norm(X) + 1e-6)

    for (a, b, c) in polar_express_coeffs:
        # compute pol(X) = a * X + b * (X*X^T) * X + c * (X*X^T)^2 * X
        A = X @ X.T
        B = b * A + c * (A @ A)
        X = a * X + B @ X

    if G.shape[0] > G.shape[1]:
        X = X.T
    return X
\end{lstlisting}

\section{Momentum variants for \Muon{}-like methods}
\label{app:momentum}

We mention three basic variants of momentum below and clarify their relation with the original \Muon{} implementation (\Cref{alg:muon-speedrun}).  
We discuss the implications of each variant for the scalar loss 
$\ell(\sv)=\tfrac12\sv^2$ studied in \eqref{eq:1d-sign-setup}: 
\begin{quote}
  Does momentum help escape the lattice confinement
  $\sv_t\in \sv_0+\eta\Z$ induced by exact polar projection and constant step size?
\end{quote}

Answer: \textbf{no}. In the scalar proxy, grid confinement persists for the two
\Muon{}-style variants where momentum is applied \emph{before} projection: 
variants (ii) and (iii) below, the latter being the default in \Muon{}.

While stated in terms of the scalar loss $\ell(\sv)=\tfrac12\sv^2$ for simplicity, with projection $\proj(\sv)=\sign(\sv)$, 
all three variants extend verbatim to the matrix case by 
interpreting $\sv$ as a matrix, and $\proj(\cdot)$ as the exact polar projection operator. 

\paragraph{(i) Momentum \emph{after} projection (Orthogonal-SGDM).}
A first option is to first project, and accumulate momentum on the projected direction
(called ``Orthogonal-SGDM'' in \cite{tuddenham2022orthogonalisinggradientsspeedneural}):
\begin{equation}
  m_t = \mu\, m_{t-1} + (1-\mu)\,\proj(\sv_t),
  \qquad
  \sv_{t+1} = \sv_t - \eta\, m_t.
  \label{eq:orth-sgdm}
\end{equation}
In 1D, this variant does \emph{not} confine every iterate $s_t$ to the lattice $\sv_0+\eta\Z$.  
As explained by \citet{jordan2024muon}, this variant is however \emph{not} the one used in \Muon{}, 
since \citet{tuddenham2022orthogonalisinggradientsspeedneural} found it can underperform 
a well-tuned SGD-momentum baseline. 

\paragraph{(ii) Momentum \emph{before} projection (Muon with standard SGD momentum).}
This is SGD with classical momentum, with the polar projection applied to the momentum buffer:
\begin{equation}
  g_t = \nabla_\sv \ell(\sv_t),
  \qquad
  m_t = \mu\, m_{t-1} + (1-\mu)\,g_t,
  \qquad
  \sv_{t+1} = \sv_t - \eta\,\proj(m_t).
  \label{eq:muon-std-mom}
\end{equation}
In the 1D case $\ell(\sv)=\tfrac12\sv^2$, $\proj(m_t)=\sign(m_t)\in\{\pm 1\}$, so the update step is always $\pm\eta$.
Hence
\begin{equation}
  \sv_{t+1}-\sv_t \in \eta\,\Z
  \quad\Longrightarrow\quad
  \sv_t\in \sv_0+\eta\Z\ \text{for all }t,
  \label{eq:grid-std-mom}
\end{equation}
and the lattice obstruction from the no-momentum case remains. 
This variant is discussed in \cite{jordan2024muon}, but not the default choice in \Muon{}. 

\paragraph{(iii) Nesterov-style momentum \emph{before} projection (Muon default).}
This is the default choice in \Muon{} (\Cref{alg:muon-speedrun}). 
It applies a Nesterov-style momentum before projection:
\begin{equation}
\begin{aligned}
  g_t &= \nabla_\sv \ell(\sv_t) \\
  m_t &= \mu\, m_{t-1} + (1-\mu)\,g_t \\
  \tilde m_t &= \mu\, m_t + (1-\mu)\,g_t \\
  \sv_{t+1} &= \sv_t - \eta\,\proj(\tilde m_t). &&
\end{aligned}
  \label{eq:muon-nesterov}
\end{equation}
Again, in the scalar proxy we have $\proj(\tilde m_t)=\sign(\tilde m_t)\in\{\pm 1\}$, so $\sv_{t+1}-\sv_t\in\{\pm\eta\}$ and therefore $\sv_t\in \sv_0+\eta\Z$ for all $t$.
Thus the same grid confinement mechanism persists. 

\section{Simple Stationarity Result}
\label{app:stationarity}

A standard descent-lemma argument (used throughout the \Muon{} stationarity literature) yields the following generic inequality, which is \Cref{eq:stationarity-constant} in the main text, with $d=\min(d_1,d_2)$:
\begin{equation}
  \left[ \min_{0\le t\le T-1}\|\nabla \loss(W_t)\|_\ast \right]
  \ \le\
  \frac{\loss(W_0)-\loss_\ast}{T\stepsize}
  \;+\;
  \frac{\Lips}{2}\,d\,\stepsize.
  \label{eq-app:stationarity-constant}
\end{equation}

Assume $\loss \colon \R^{d_1\times d_2}\to\R$ is $\Lips$-smooth w.r.t.\ the Frobenius norm, i.e.,
\[
\loss(W') \le \loss(W) + \langle \nabla\loss(W), W'-W\rangle + \frac{\Lips}{2}\|W'-W\|_{\mathrm{F}}^2
\qquad \forall\,W,W'\in\R^{d_1\times d_2}.
\]
Consider the exact Stiefel / polar / spectral-norm LMO update~\eqref{eq:polardef} with step sizes $\stepsize_t>0$:
\begin{equation}
  W_{t+1}=W_t-\stepsize_t\,U_t^{}V_t^\top,
  \qquad
  \nabla \loss(W_t)=U_t\diag(\svnn_{t,1}, \ldots, \svnn_{t,d})V_t^\top,
  \label{eq:lmo-update}
\end{equation}
where $U_t\in\R^{d_1\times d}$ and $V_t\in\R^{d_2\times d}$ have orthonormal columns and $\svnn_{t,i} \geq 0$ are the singular values of the gradient at $W_t$. 

Aiming to plug $W'=W_{t+1}$ and $W=W_t$ in the inequality above, notice $W'-W = -\stepsize_t U_t^{}V_t^\top$,
$\|U_t^{}V_t^\top\|_{\mathrm{F}}^2 = d$
and 
$\langle \nabla \loss(W_t), U_t^{}V_t^\top\rangle = \sum_i \svnn_{t,i} = \|\nabla \loss(W_t)\|_\ast$.
Then, the inequality gives
\begin{equation}
  \loss(W_{t+1}) \le \loss(W_t) - \stepsize_t \|\nabla \loss(W_t)\|_\ast + \frac{\Lips}{2}\,d\,\stepsize_t^2.
  \label{eq:descent-lemma-nuclear}
\end{equation}
Rearrange and sum for $t=0,\dots,T-1$:
\[
\sum_{t=0}^{T-1}\stepsize_t \|\nabla\loss(W_t)\|_\ast
\le \loss(W_0)-\loss(W_T) + \frac{\Lips}{2}\,d\sum_{t=0}^{T-1}\stepsize_t^2
\le \loss(W_0)-\loss_\ast + \frac{\Lips}{2}\,d\sum_{t=0}^{T-1}\stepsize_t^2,
\]
where $\loss_\ast\coloneqq \inf_W \loss(W)$.
Since $\stepsize_t>0$ for all $t$, we find
\[
\min_{0\le t\le T-1}\|\nabla \loss(W_t)\|_\ast
\ \le\
\frac{\loss(W_0)-\loss_\ast}{\sum_{t=0}^{T-1}\stepsize_t}
\;+\;
\frac{\Lips}{2}\,d\,\frac{\sum_{t=0}^{T-1}\stepsize_t^2}{\sum_{t=0}^{T-1}\stepsize_t}.
\]
Taking $\stepsize_t=\stepsize$ constant gives \eqref{eq-app:stationarity-constant}.

\section{Controlled-spectrum quadratic experiments}
\label{app:controlled-spectrum}

\subsection{Problem family and spectrum shapes}
\label{app:controlled-spectrum-setup}

All quadratic objectives in \Cref{sec:setup-conditioning} are derived from
a noiseless linear least-squares problem.
We consider matrix parameters $W\in\R^{d_{\mathrm{in}}\times d_{\mathrm{out}}}$
and define
\begin{equation}
  \loss(W)
  \;=\;
  \frac{1}{2\,n\,d_{\mathrm{out}}}\,\|XW - Y\|_{\mathrm{F}}^2,
  \label{eq:lsq}
\end{equation}
where $X\in\R^{n\times d_{\mathrm{in}}}$ and $Y\in\R^{n\times d_{\mathrm{out}}}$.
Expanding \eqref{eq:lsq} yields the expression
\[
\loss(W)
\;=\;
\tfrac12\langle W, A W\rangle
\;+\;
\langle B, W\rangle
\;+\;
c,
\qquad
A=\tfrac{1}{n\,d_{\mathrm{out}}} X^\top X,
\quad
B=-\tfrac{1}{n\,d_{\mathrm{out}}} X^\top Y,
\quad
c=\tfrac{1}{2n\,d_{\mathrm{out}}}\|Y\|_{\mathrm{F}}^2 .
\]

\paragraph{Construction of $X$ with prescribed spectrum.}
We set $n=d_{\mathrm{in}}=d_{\mathrm{out}}=100$.
To control the spectrum of $A$, we construct $X$ directly from a prescribed
eigenvalue sequence.
For a chosen spectrum family and fixed $s_{\max} > s_{\min} > 0$, we generate then sort eigenvalues
$s_1\ge \cdots \ge s_n \in [s_{\min},s_{\max}]$ while enforcing the endpoints
$s_1=s_{\max}$ and $s_n=s_{\min}$.
We then define singular values
\[
\sigma_i = \sqrt{n\,d_{\mathrm{out}}\, s_i}, \qquad i=1,\dots,n,
\]
and construct
\[
X = U\,\mathrm{diag}(\sigma)\,V^\top,
\]
where $U\in\R^{n\times n}$ and $V\in\R^{n\times n}$ are random orthogonal matrices
obtained as the $Q$ factors of QR factorizations computed by \texttt{torch.linalg.qr}
applied to matrices with i.i.d.\ $\mathcal N(0,1)$ entries. 
By construction,
\[
A = \tfrac{1}{n\,d_{\mathrm{out}}} X^\top X
= V\,\mathrm{diag}(s)\,V^\top ,
\]
so the spectrum of $A$ is exactly $\{s_i\}$.

\paragraph{Targets and optimum.}
We fix a planted ground-truth matrix $W_\star$ and set $Y = X W_\star$.
The problem is noiseless, and since $s_{\min}>0$,
the minimizer is unique and given by $W_\star$,
with $\loss(W_\star)=0$ up to numerical precision.

\paragraph{Spectrum families.}
Across all experiments we fix
$s_{\min}=10^{-3}$ and $s_{\max}=10$,
so that the condition number is $\kappa=10^4$.
We vary only the \emph{shape} of the spectrum (\Cref{fig:eig-distributions}):
\begin{itemize}[leftmargin=*, itemsep=0.25em]
\item \texttt{max\_spiked}: $s_n=s_{\min}$ and $s_1=\cdots=s_{n-1}=s_{\max}$ (a single small eigenvalue, mass at the top).
\item \texttt{min\_spiked}: $s_1=s_{\max}$ and $s_2=\cdots=s_n=s_{\min}$ (a single large eigenvalue, mass at the bottom). 
\item \texttt{uniform}: eigenvalues sampled i.i.d.\ uniformly in $[s_{\min},s_{\max}]$, then sorted and endpoints enforced.
\item \texttt{gaussian}: eigenvalues sampled i.i.d.\ from a truncated normal distribution centered at $(s_{\min}+s_{\max})/2$. For $k:=3$, $\text{mid}:=(s_{\min}+s_{\max})/2$, and $\sigma:=(s_{\max}-s_{\min})/(2k)$, we
sample $z_i\sim\mathcal{N}(0,1)$ i.i.d., clip to $[-k,k]$, set $\tilde s_i = \text{mid} + \sigma z_i$, then sort and enforce endpoints. 
\item \texttt{linear\_decay\_to\_max}: eigenvalues sampled (then sorted) via $\tilde s = s_{\max} - (s_{\max}-s_{\min})\sqrt{u}$ with $u\sim\mathrm{Unif}[0,1]$, yielding higher density near $s_{\min}$. 
\item \texttt{geometric\_decay\_to\_max}: a deterministic geometric progression starting at $s_{\max}$ with ratio $q=0.9$, clipped below at $s_{\min}$: $s_i := \max(s_{\min}, s_{\max} \cdot q^{i-1})$ for $i=1,\dots,n$.
\item \texttt{u\_shaped}: mixture with mass near both endpoints. We use a symmetric Beta$(\alpha,\alpha)$ distribution on $[0,1]$ with $\alpha=0.2$, mapped affinely to $[s_{\min},s_{\max}]$: $\tilde s_i = s_{\min} + (s_{\max}-s_{\min}) z_i$ where $z_i\sim\text{Beta}(0.2,0.2)$.
\end{itemize}

\begin{figure}[t]
  \centering
  \includegraphics[width=0.98\linewidth]{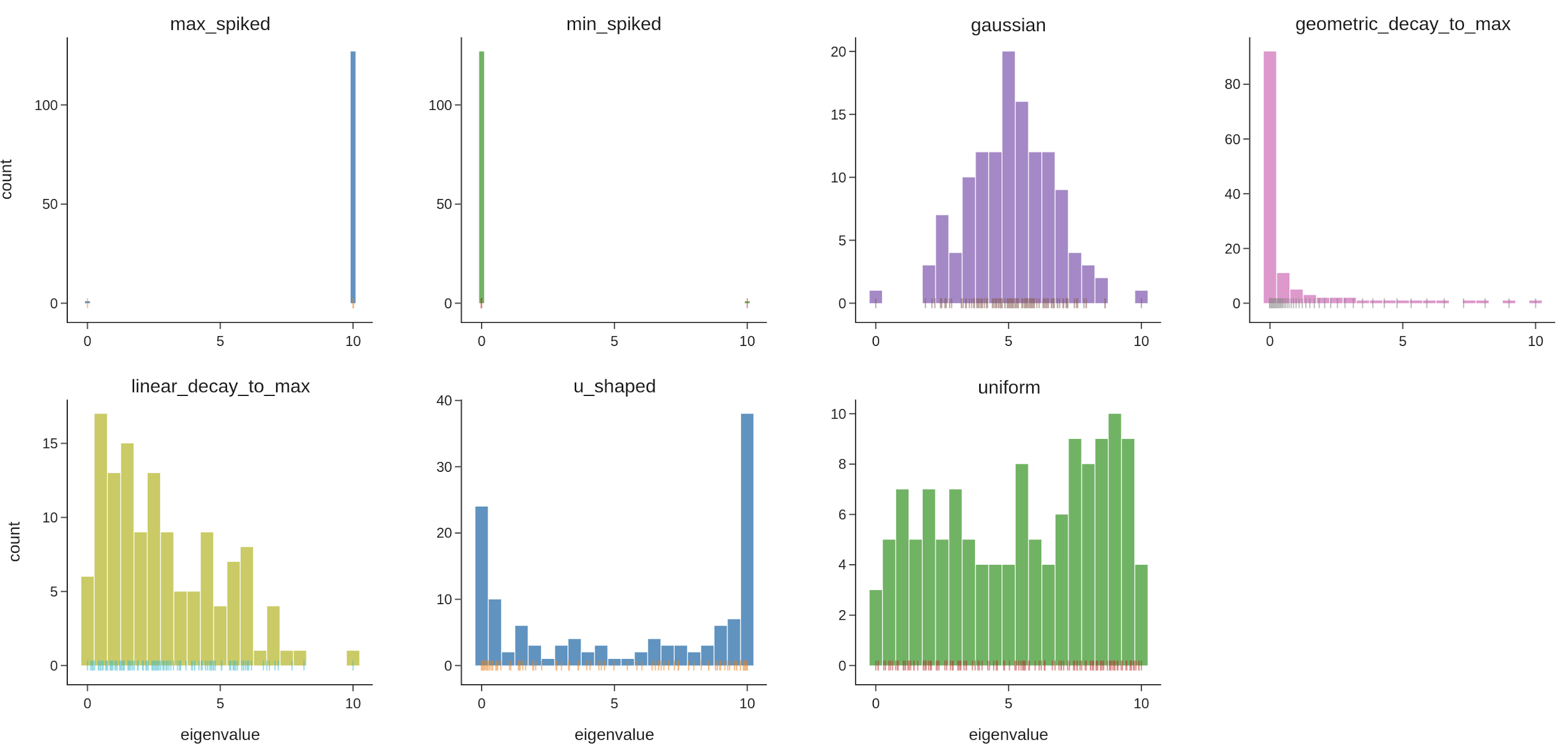}
  \caption{Eigenvalue distributions for the controlled-spectrum families used in \Cref{app:controlled-spectrum}. All families share the same endpoints $(s_{\min},s_{\max})=(10^{-3}, 10^1)$ and thus the same condition number $\kappa=s_{\max}/s_{\min} = 10^4$, but have different spectrum shapes.}
  \label{fig:eig-distributions}
\end{figure}

\subsection{Initialization, horizon, learning-rate selection, and metrics}
\label{app:controlled-spectrum-protocol}

For each fixed quadratic, we sample $100$ i.i.d.\ initializations
$[W_0]_{ij}\overset{\text{i.i.d.}}{\sim}\mathcal N(0,1/n)$. 
We run each method for $T=500$ iterations.

Learning rates are selected from $\{10^{-1},10^{-2},10^{-3}\}$.
When reporting a comparison between two methods,
we select, for each method separately, the learning rate that achieves the smallest loss \emph{up to time $T$}. 
This mirrors a finite-budget ``best-tuned constant step size'' perspective. 

We also experiment with vanishing step sizes, using the \texttt{torch.optim.lr\_scheduler.CosineAnnealingLR} schedule from PyTorch with initial learning rate in $\{10^{-1},10^{-2},10^{-3}\}$ and final learning rate $10^{-3}$ at time $T$.

We report:
(i) \textbf{win rates} (fraction of initializations where optimizer 1 attains lower loss than optimizer 2 at time $t$),
and (ii) \textbf{magnitude summaries} via mean best-loss ratios, defined as
\[
R_t := \frac{\min_{s\leq t }\loss^{\text{optimizer 2}}_s}{\min_{s\leq t}\loss^{\text{optimizer 1}}_s},
\]
reported as $\mathrm{mean}(R_t)\pm 1.96$ standard error over initializations.

\subsection{Methods compared}
\label{app:controlled-spectrum-methods}

In addition to standard \GD{} and Adam, we study several \Muon{} variants: with or without Nesterov-Style momentum, and with exact or Newton--Schulz approximate projection (turning on or off the relevant features in \Cref{alg:muon-aux}).

\subsection{Additional tables: magnitude of improvements}
\label{app:controlled-spectrum-magnitude}

\Cref{tab:exact-muon-vs-gd-ratios} reports mean loss ratios,  complementing the win-rate in \Cref{tab:exact-muon-vs-gd-win}. 
This distinguishes ``barely wins'' from ``wins by a large margin'' at different milestones $t=T/10$, $t=T/2$, and $t=T$, 
instead of just $t=T$ as in the bar visualizations in \Cref{app:controlled-spectrum-bars}. 
We observe that when \Muon{} 
either wins or loses against \GD{}, it often does so by at least an order of magnitude in loss value. 

\begin{table}[H]
\centering
\small
\setlength{\tabcolsep}{3pt}
\renewcommand{\arraystretch}{1.15}
\caption{ Mean $\pm$ 1.96 standard error (over initializations) of best-\GD{}-loss divided by best-\Muon{}-loss (exact projection, no momentum, constant step size). 
Values $>1$ favor \Muon{}. 
This can be read as ``\Muon{} is better by a factor of \texttt{value} on average''.}
\label{tab:exact-muon-vs-gd-ratios}
\begin{tabular}{lccc}
\toprule
kind & $t=T/10$ & $t=T/2$ & $t=T$ \\
\midrule
\texttt{max\_spiked} & $0.00 \pm 0.000000$ & $0.00 \pm 0.000000$ & $0.00 \pm 0.000000$ \\
\texttt{min\_spiked} & $15.0 \pm 6.0$ & $165.0 \pm 28.0$ & $949.0 \pm 620.0$ \\
\texttt{uniform} & $0.10 \pm 0.0010$ & $0.00 \pm 0.000064$ & $0.01 \pm 0.00040$ \\
\texttt{gaussian} & $0.00 \pm 0.000003$ & $0.00 \pm 0.000007$ & $0.00 \pm 0.0005$ \\
\texttt{linear\_decay\_to\_max} & $0.45 \pm 0.0030$ & $0.00 \pm 0.000020$ & $0.02 \pm 0.0007$ \\
\texttt{u\_shaped} & $0.44 \pm 0.003$ & $0.11 \pm 0.003$ & $2.48 \pm 0.018$ \\
\texttt{geometric\_decay\_to\_max} & $7.0 \pm 0.03$ & $10.0 \pm 0.3$ & $78.0 \pm 0.3$ \\
\bottomrule
\end{tabular}
\end{table}

\subsection{Bar visualizations: aligned vs.\ absolute loss levels}
\label{app:controlled-spectrum-bars}

To complement the win rates and loss ratios, we report bar plots summarizing final loss levels after $T=500$ iterations, averaged over random initializations.
We use two visualizations, see \Cref{fig:improvement-bars} and \Cref{fig:improvement-bars-not-aligned} for constant learning rates, 
and \Cref{fig:improvement-bars-vanishing} and \Cref{fig:improvement-bars-vanishing-not-aligned} for vanishing learning rates 
(same qualitative pattern as in the constant step size setting).
The \emph{aligned} version shifts all bars to share a common starting height (the average initial loss are aligned across spectrum families),
so that bar length directly represents the number of orders of magnitude by which the loss is reduced.
The \emph{not-aligned} version plots the average initial and final loss levels on the same log scale, making absolute starting and ending values visible.
These views are complementary: the aligned plot isolates \emph{relative decrease}, while the not-aligned plot reveals the corresponding \emph{absolute} scale.
For numerical readability, for the \texttt{max\_spiked} family we stopped optimizing \GD{} once the loss reached $10^{-5}$, 
even if it keeps improving beyond that point, since the loss has already decreased by at least 8 orders of magnitude and further improvements would 
distort the scale of the plot and make it harder to visually compare the other spectrum families.

\begin{figure}[H]
  \centering
  \includegraphics[width=0.8\linewidth]{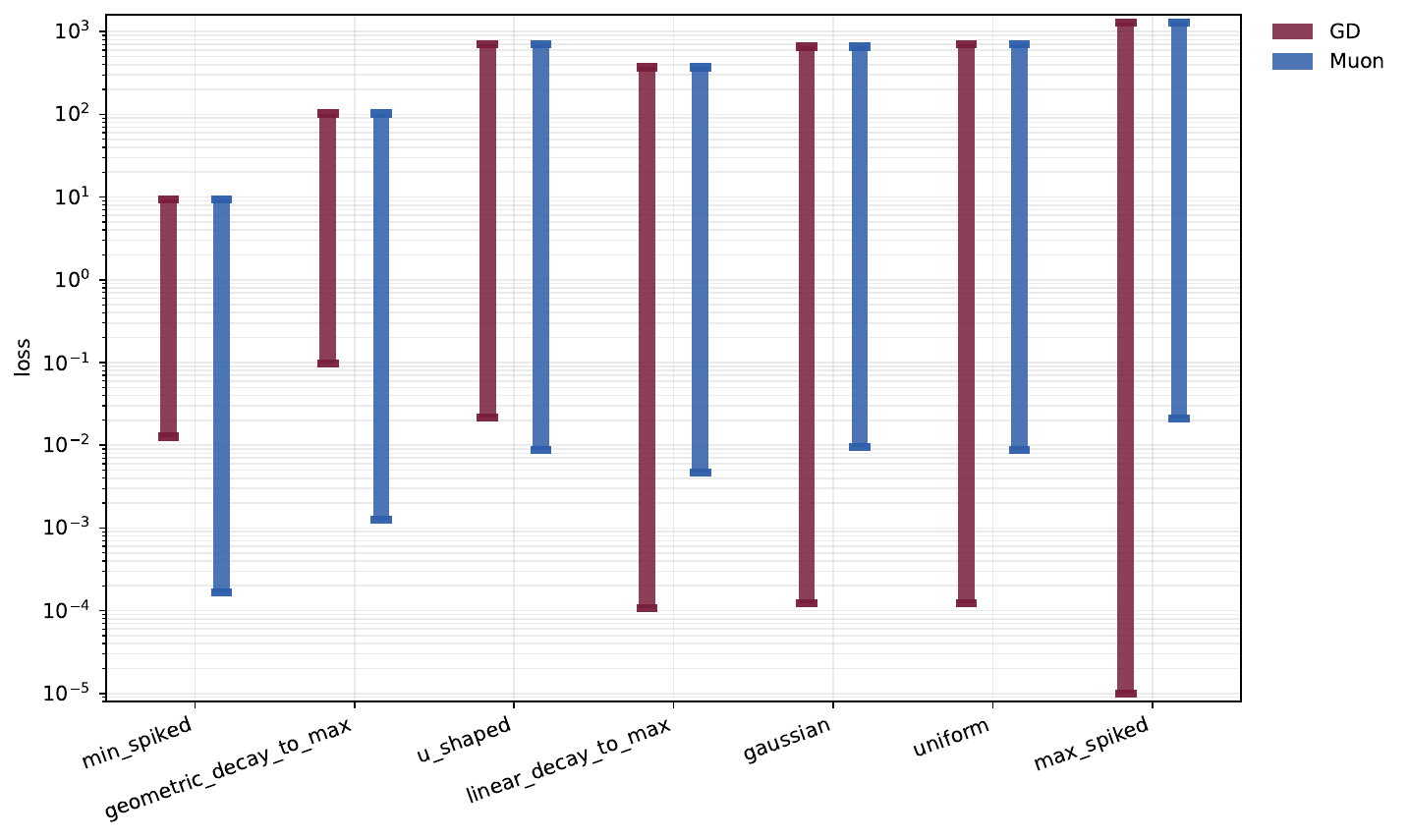}
  \caption{
  Bar plot of \emph{absolute} initial and final loss levels after $T=500$ iterations on the controlled-spectrum quadratic family (constant step size; same setting as \Cref{sec:setup-conditioning}).
  Unlike the aligned view in \Cref{fig:improvement-bars}, bars are not shifted and therefore reflect both the starting loss scale and the final loss achieved.
  For numerical readability, for the \texttt{max\_spiked} family we clip \GD{} losses below $10^{-5}$ when plotting. 
  Spectrum families are ordered from left to right by increasing loss reduction achieved by \GD{} relative to its starting loss (while \Muon{} achieves a comparable order-of-magnitude decrease across families).
  }
  \label{fig:improvement-bars-not-aligned}
\end{figure}

\begin{figure}[H]
  \centering
  \includegraphics[width=0.8\linewidth]{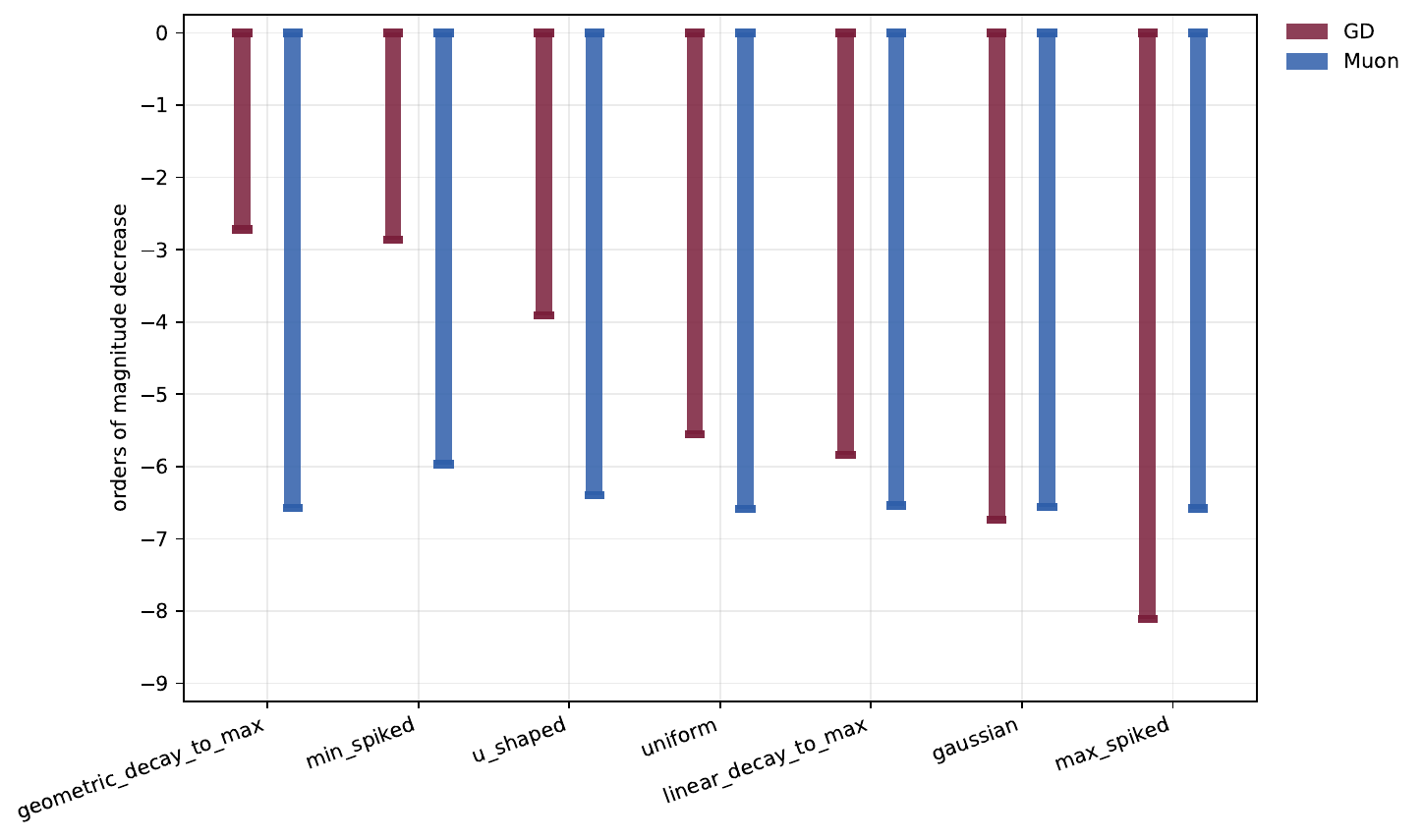}
  \caption{
  Orders of magnitude of loss decrease after $T=500$ iterations under a vanishing learning-rate schedule (CosineAnnealingLR; see \Cref{app:controlled-spectrum-protocol}).
  Bars are aligned at the common initial loss so that their lengths represent the logarithmic decrease achieved.
  The same qualitative pattern as in the constant-step setting persists across spectrum families.
  For numerical readability, for the \texttt{max\_spiked} family we clip \GD{} losses once they reach $10^{-5}$ when plotting. 
    Spectrum families are ordered from left to right by increasing loss reduction achieved by \GD{} relative to its starting loss (while \Muon{} achieves a comparable order-of-magnitude decrease across families).
  }
  \label{fig:improvement-bars-vanishing}
\end{figure}

\begin{figure}[H]
  \centering
  \includegraphics[width=0.8\linewidth]{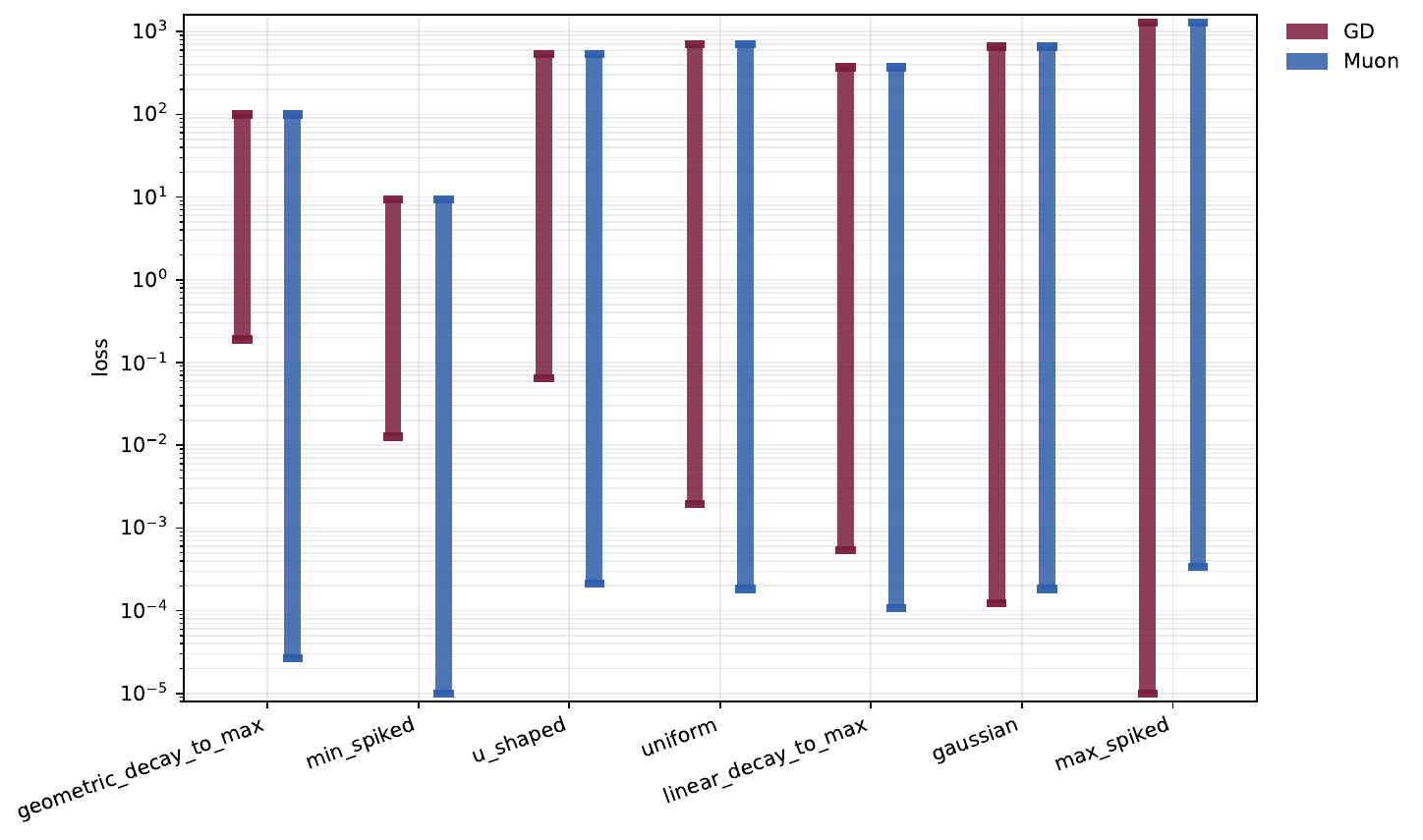}
  \caption{
  Bar plot of absolute initial and final loss levels after $T=500$ iterations under a vanishing learning-rate schedule (same setting as \Cref{fig:improvement-bars-vanishing}).
  Bars are not shifted and therefore reflect both the starting loss scale and the final loss achieved.
  For numerical readability, for the \texttt{max\_spiked} family we clip \GD{} losses below $10^{-5}$ when plotting. 
    Spectrum families are ordered from left to right by increasing loss reduction achieved by \GD{} relative to its starting loss (while \Muon{} achieves a comparable order-of-magnitude decrease across families).
  }
  \label{fig:improvement-bars-vanishing-not-aligned}
\end{figure}

\subsection{Sample trajectories and robustness checks}
\label{app:controlled-spectrum-trajectories}

We report representative sample trajectories (one random initialization $W_0$)
for each spectrum family under two settings:
(i) constant-step optimizers with multiple \Muon{} variants
(with or without Nesterov-style momentum and with exact or Newton--Schulz approximate projection), and
(ii) vanishing step sizes. 
See Figures~\ref{fig:sample-grid-spikes}--\ref{fig:sample-grid-ushape} for representative trajectories across spectrum shapes, shown as paired panels (constant LR vs.\ vanishing LR) for the same underlying quadratic instance.

These representative trajectories support the qualitative picture of
\Cref{sec:setup-conditioning}. 
Across spectrum families, the relative ordering between \GD{} and \Muon{}
variants is stable under standard ablations of \Muon{}'s components
(momentum and approximate projection). 
Using vanishing learning-rate schedules primarily affects late-stage dynamics
by eventually breaking grid confinement for \Muon{}.
However, this typically occurs after \GD{} has already converged,
and it does not reverse the relative performance within the fixed iteration
budget considered here.
\Cref{tab:vanishing-exact-nomom,tab:vanishing-exact-mom,tab:vanishing-inexact-nomom,tab:vanishing-inexact-mom}
compare \GD{} (best across constant learning rate schedules) to several \Muon{} variants with vanishing learning rates,
starting from the exact-projection, no-momentum baseline and successively
turning on Newton--Schulz approximation and momentum.
Across spectrum families, the dominant ranking between \GD{} and \Muon{}
is preserved: spectrum shapes that favor (resp.\ disfavor) \Muon{} under
constant step sizes remain favorable (resp.\ unfavorable) under these
ablations, with only minor quantitative changes in win rates. 
The same pattern holds when comparing variants of \Muon{} with \emph{constant} 
step sizes, against \GD{} (not shown here for brevity). 
Overall, these results reinforce the conclusion that conditioning alone does
not explain the relative performance of \GD{} and \Muon{} variants on these quadratics, and that
finer spectral structure plays a decisive role.

\begin{table}[H]
\centering
\small
\setlength{\tabcolsep}{3pt}
\renewcommand{\arraystretch}{1.15}
\caption{Win rates for \Muon{} (exact projection, no momentum, \textbf{vanishing learning rate})
vs.\ \GD{}, comparing the best loss achieved up to time $t$.
All spectra share the same endpoints $(s_{\min},s_{\max})$ and condition number $\kappa$.}
\label{tab:vanishing-exact-nomom}
\begin{tabular}{lccc}
\toprule
kind & $t=T/10$ & $t=T/2$ & $t=T$ \\
\midrule
\texttt{max\_spiked}              & 0   & 0   & 0   \\
\texttt{min\_spiked}              & 1   & 1   & 1   \\
\texttt{uniform}                & 0   & 0   & 0   \\
\texttt{gaussian}               & 0   & 0   & 0   \\
\texttt{linear\_decay\_to\_max} & 0   & 0   & 0.05\\
\texttt{u\_shaped}       & 0   & 0   & 1   \\
\texttt{geometric\_decay\_to\_max}          & 1   & 1   & 1   \\
\bottomrule
\end{tabular}
\end{table}

\begin{table}[H]
\centering
\small
\setlength{\tabcolsep}{3pt}
\renewcommand{\arraystretch}{1.15}
\caption{Win rates for \Muon{} (exact projection \textbf{with momentum}, \textbf{vanishing learning rate})
vs.\ \GD{}, comparing the best loss achieved up to time $t$.}
\label{tab:vanishing-exact-mom}
\begin{tabular}{lccc}
\toprule
kind & $t=T/10$ & $t=T/2$ & $t=T$ \\
\midrule
\texttt{max\_spiked}              & 0   & 0   & 0   \\
\texttt{min\_spiked}              & 1   & 1   & 1   \\
\texttt{uniform}                & 0   & 0   & 0   \\
\texttt{gaussian}               & 0   & 0   & 0   \\
\texttt{linear\_decay\_to\_max} & 0   & 0   & 0.03\\
\texttt{u\_shaped}       & 0   & 0   & 1   \\
\texttt{geometric\_decay\_to\_max}          & 1   & 1   & 1   \\
\bottomrule
\end{tabular}
\end{table}

\begin{table}[H]
\centering
\small
\setlength{\tabcolsep}{3pt}
\renewcommand{\arraystretch}{1.15}
\caption{Win rates for \Muon{} (\textbf{Newton--Schulz projection}, no momentum, \textbf{vanishing learning rate})
vs.\ \GD{}, comparing the best loss achieved up to time $t$.}
\label{tab:vanishing-inexact-nomom}
\begin{tabular}{lccc}
\toprule
kind & $t=T/10$ & $t=T/2$ & $t=T$ \\
\midrule
\texttt{max\_spiked}              & 0 & 0 & 0 \\
\texttt{min\_spiked}              & 1 & 1 & 1 \\
\texttt{uniform}                & 0 & 0 & 0 \\
\texttt{gaussian}               & 0 & 0 & 0 \\
\texttt{linear\_decay\_to\_max} & 0 & 0 & 0 \\
\texttt{u\_shaped}       & 0 & 0 & 1 \\
\texttt{geometric\_decay\_to\_max}          & 1 & 1 & 1 \\
\bottomrule
\end{tabular}
\end{table}

\begin{table}[H]
\centering
\small
\setlength{\tabcolsep}{3pt}
\renewcommand{\arraystretch}{1.15}
\caption{Win rates for \Muon{} (\textbf{Newton--Schulz projection} \textbf{with momentum}, \textbf{vanishing learning rate})
vs.\ \GD{}, comparing the best loss achieved up to time $t$.}
\label{tab:vanishing-inexact-mom}
\begin{tabular}{lccc}
\toprule
kind & $t=T/10$ & $t=T/2$ & $t=T$ \\
\midrule
\texttt{max\_spiked}              & 0 & 0 & 0 \\
\texttt{min\_spiked}              & 1 & 1 & 1 \\
\texttt{uniform}                & 0 & 0 & 0 \\
\texttt{gaussian}               & 0 & 0 & 0 \\
\texttt{linear\_decay\_to\_max} & 0 & 0 & 0.31\\
\texttt{u\_shaped}       & 0 & 0 & 1 \\
\texttt{geometric\_decay\_to\_max}          & 1 & 1 & 1 \\
\bottomrule
\end{tabular}
\end{table}

\begin{figure}[H]
\centering

\begin{subfigure}[t]{0.49\linewidth}
  \centering
  \includegraphics[width=\linewidth]{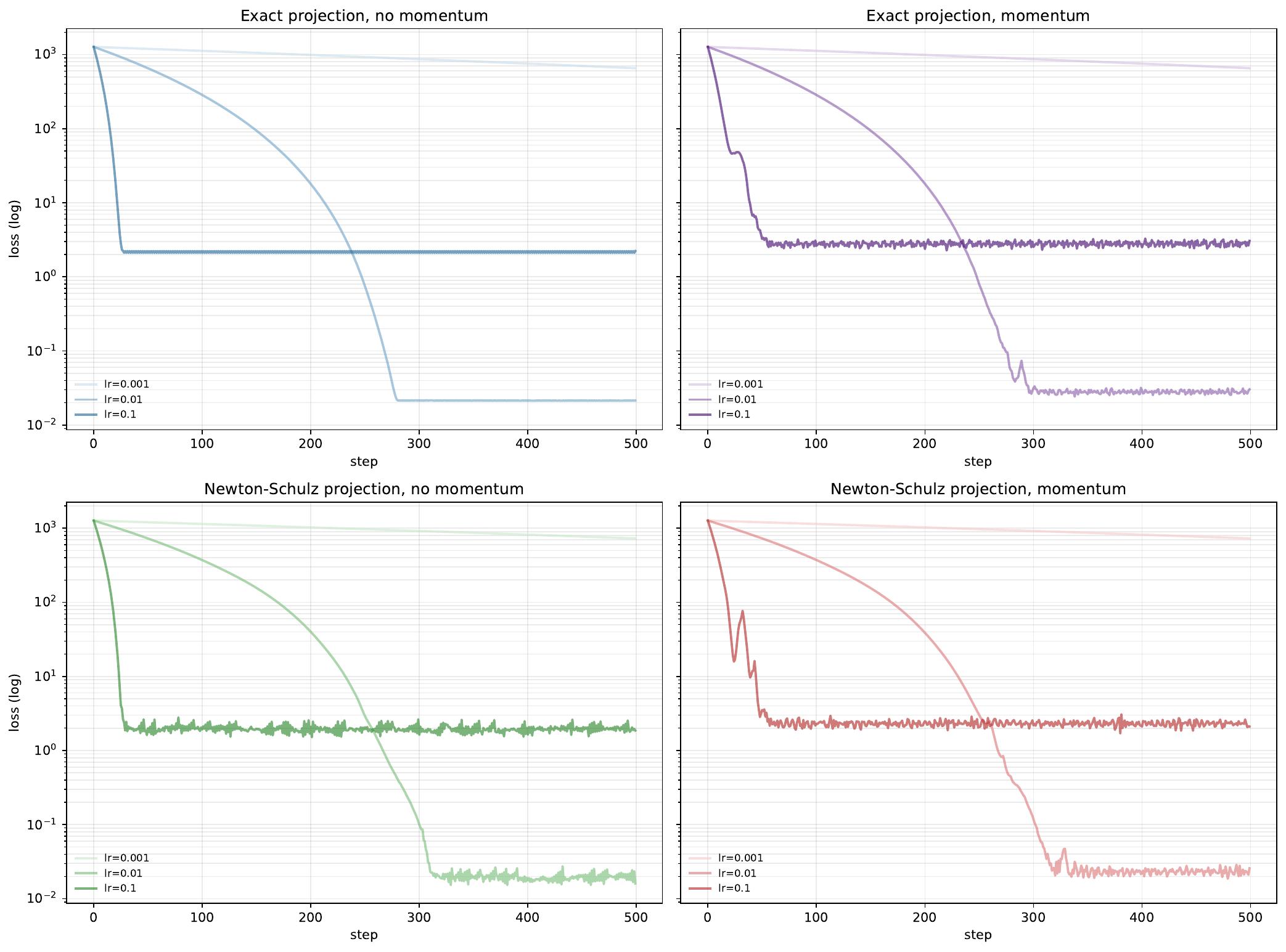}
  \caption{\texttt{max\_spiked} (const LR)}
\end{subfigure}\hfill
\begin{subfigure}[t]{0.49\linewidth}
  \centering
  \includegraphics[width=\linewidth]{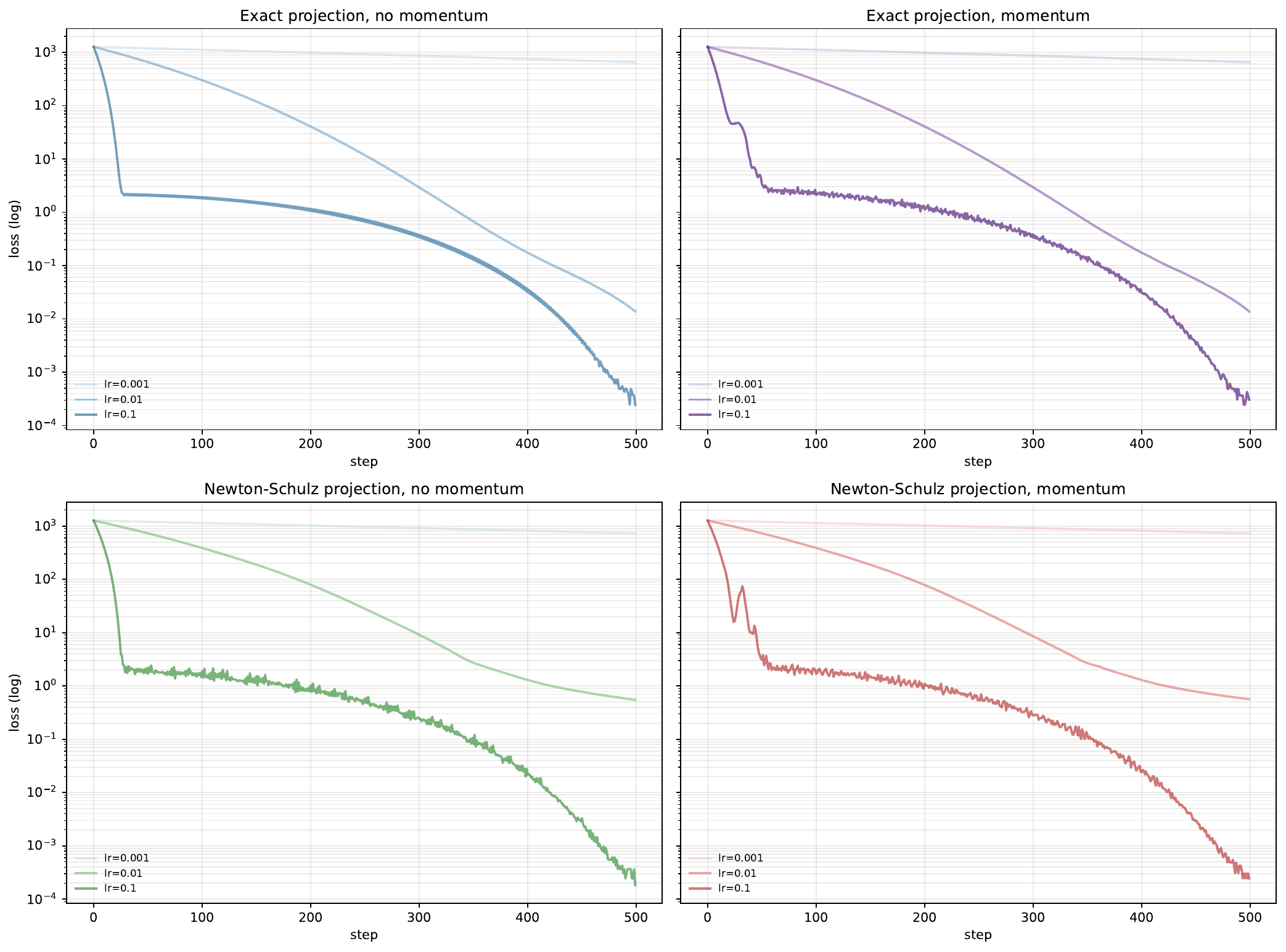}
  \caption{\texttt{max\_spiked} (vanish LR)}
\end{subfigure}

\vspace{0.6em}

\begin{subfigure}[t]{0.49\linewidth}
  \centering
  \includegraphics[width=\linewidth]{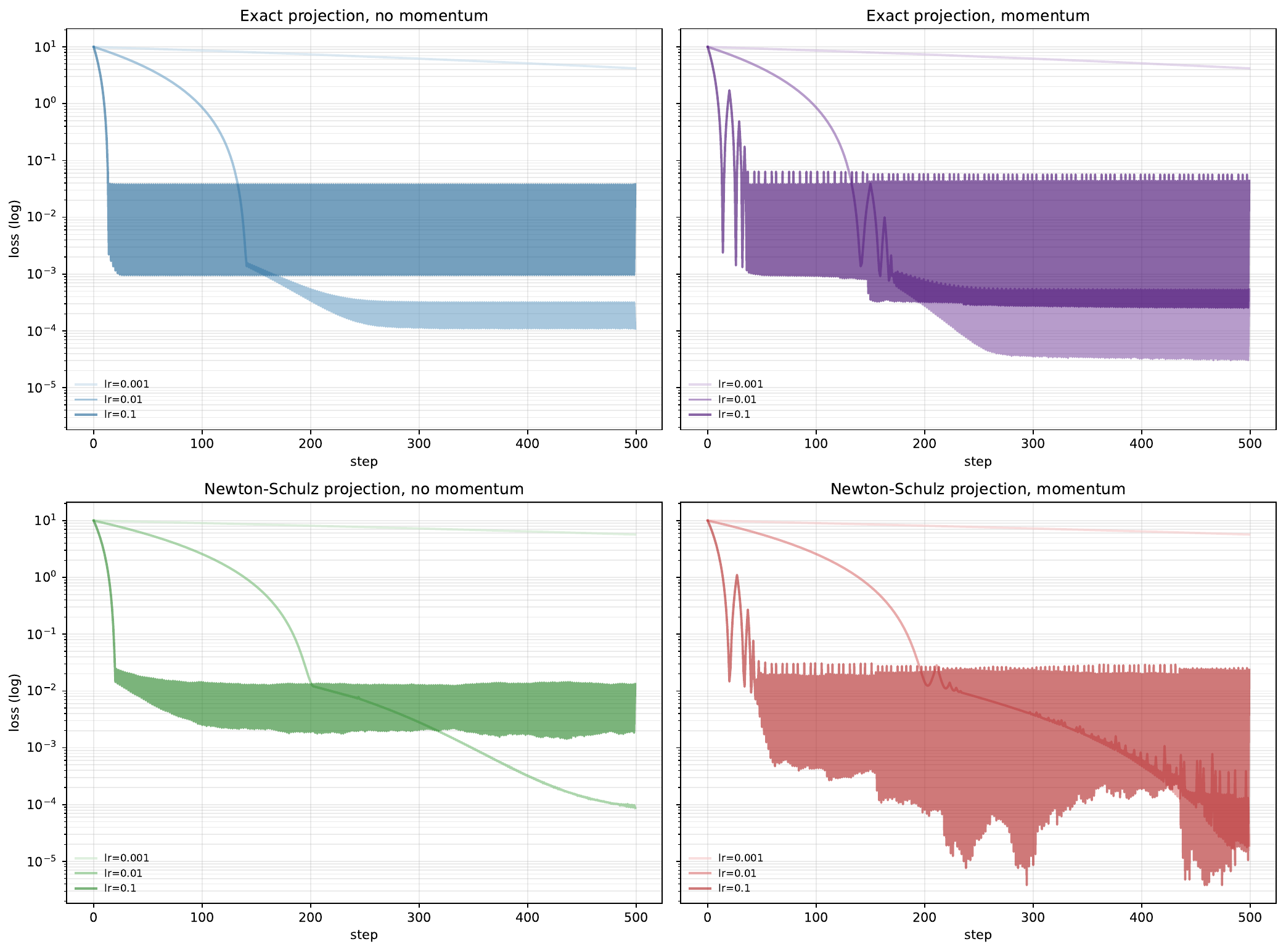}
  \caption{\texttt{min\_spiked} (const LR)}
\end{subfigure}\hfill
\begin{subfigure}[t]{0.49\linewidth}
  \centering
  \includegraphics[width=\linewidth]{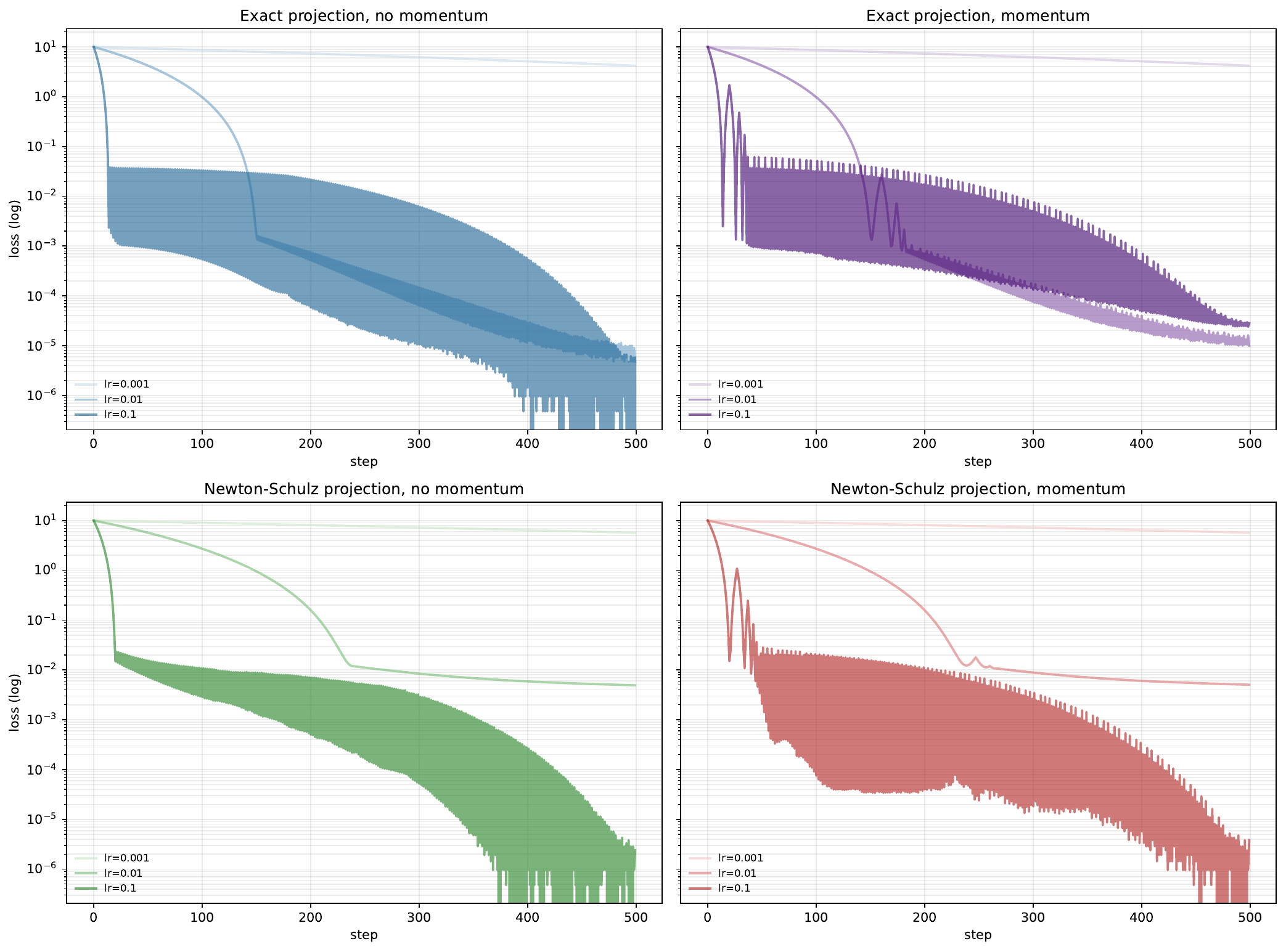}
  \caption{\texttt{min\_spiked} (vanish LR)}
\end{subfigure}

\caption{Sample trajectories (one random $W_0$) comparing \Muon{} variants. Left: constant learning rate (LR). Right: vanishing LR.}
\label{fig:sample-grid-spikes}
\end{figure}

\begin{figure}[H]
\centering

\begin{subfigure}[t]{0.49\linewidth}
  \centering
  \includegraphics[width=\linewidth]{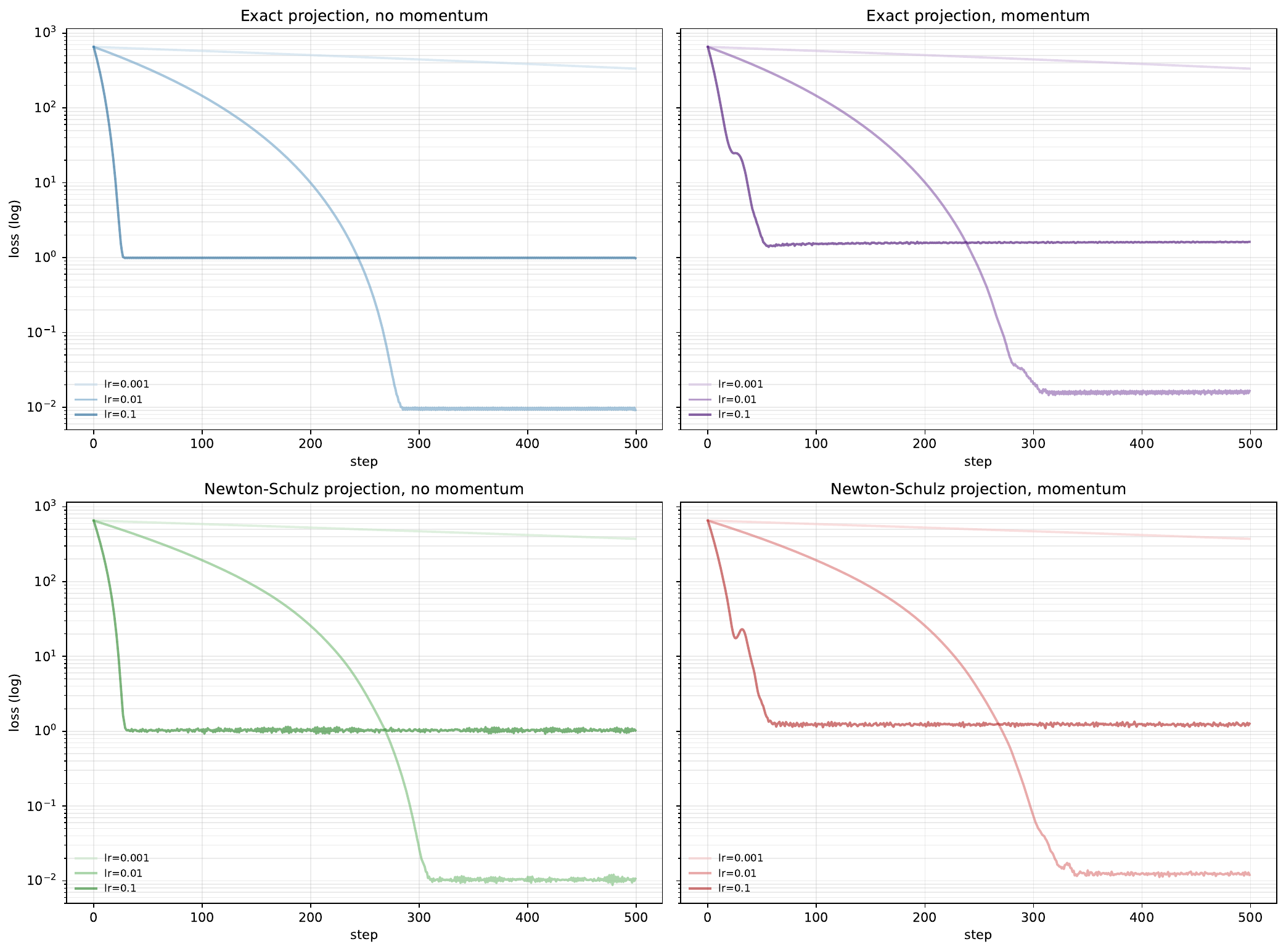}
  \caption{\texttt{gaussian} (const LR)}
\end{subfigure}\hfill
\begin{subfigure}[t]{0.49\linewidth}
  \centering
  \includegraphics[width=\linewidth]{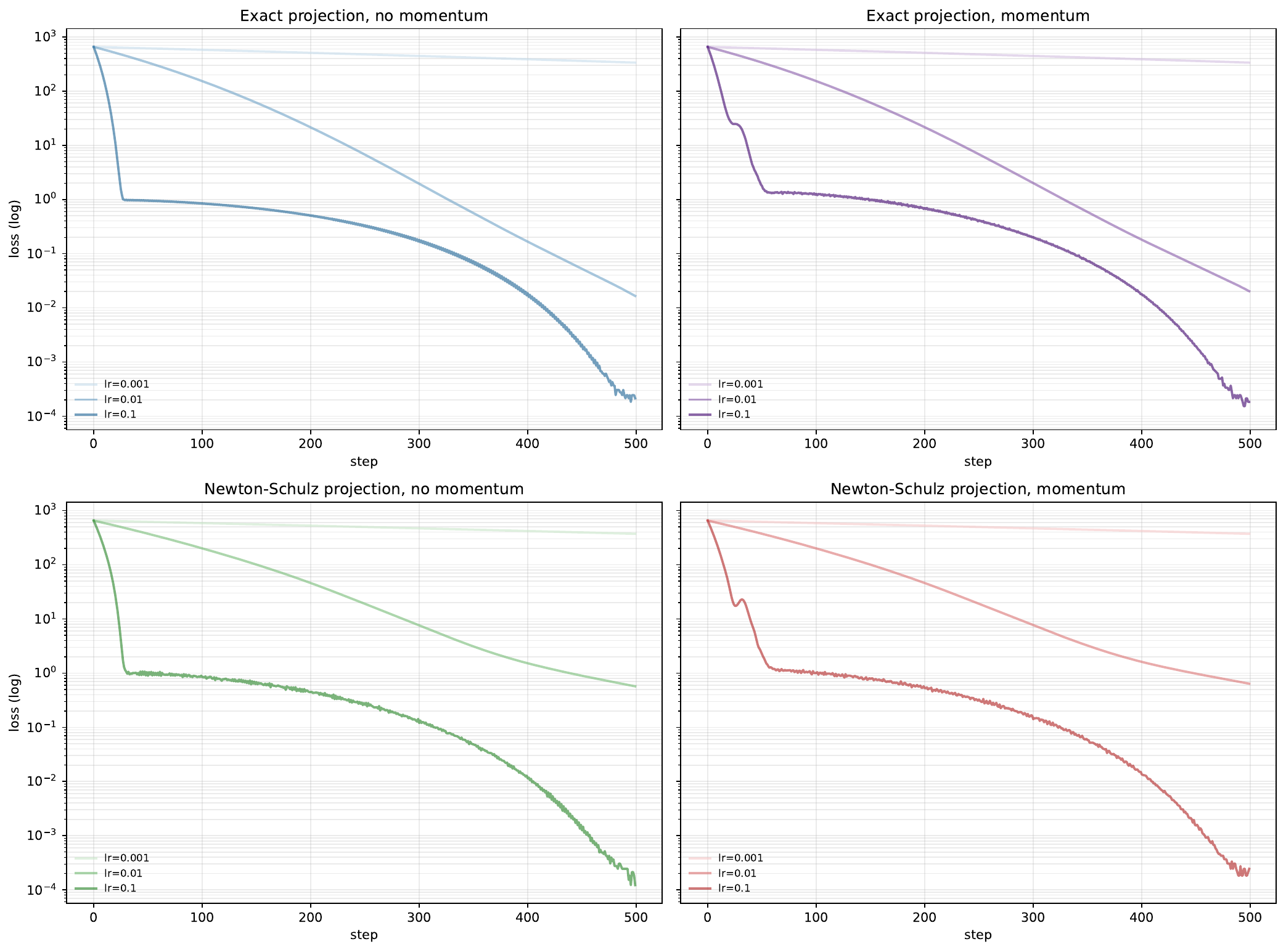}
  \caption{\texttt{gaussian} (vanish LR)}
\end{subfigure}

\vspace{0.6em}

\begin{subfigure}[t]{0.49\linewidth}
  \centering
  \includegraphics[width=\linewidth]{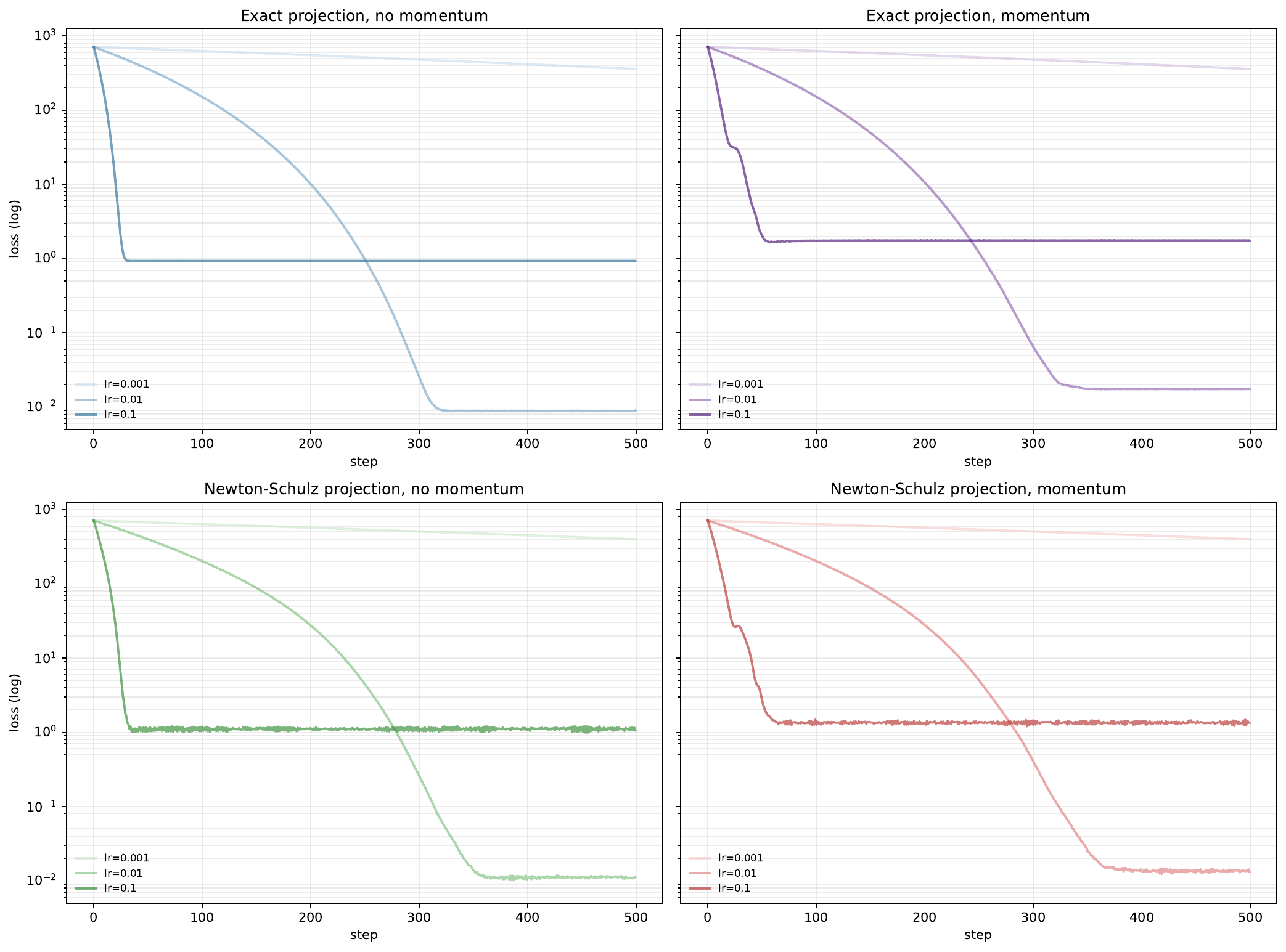}
  \caption{\texttt{uniform} (const LR)}
\end{subfigure}\hfill
\begin{subfigure}[t]{0.49\linewidth}
  \centering
  \includegraphics[width=\linewidth]{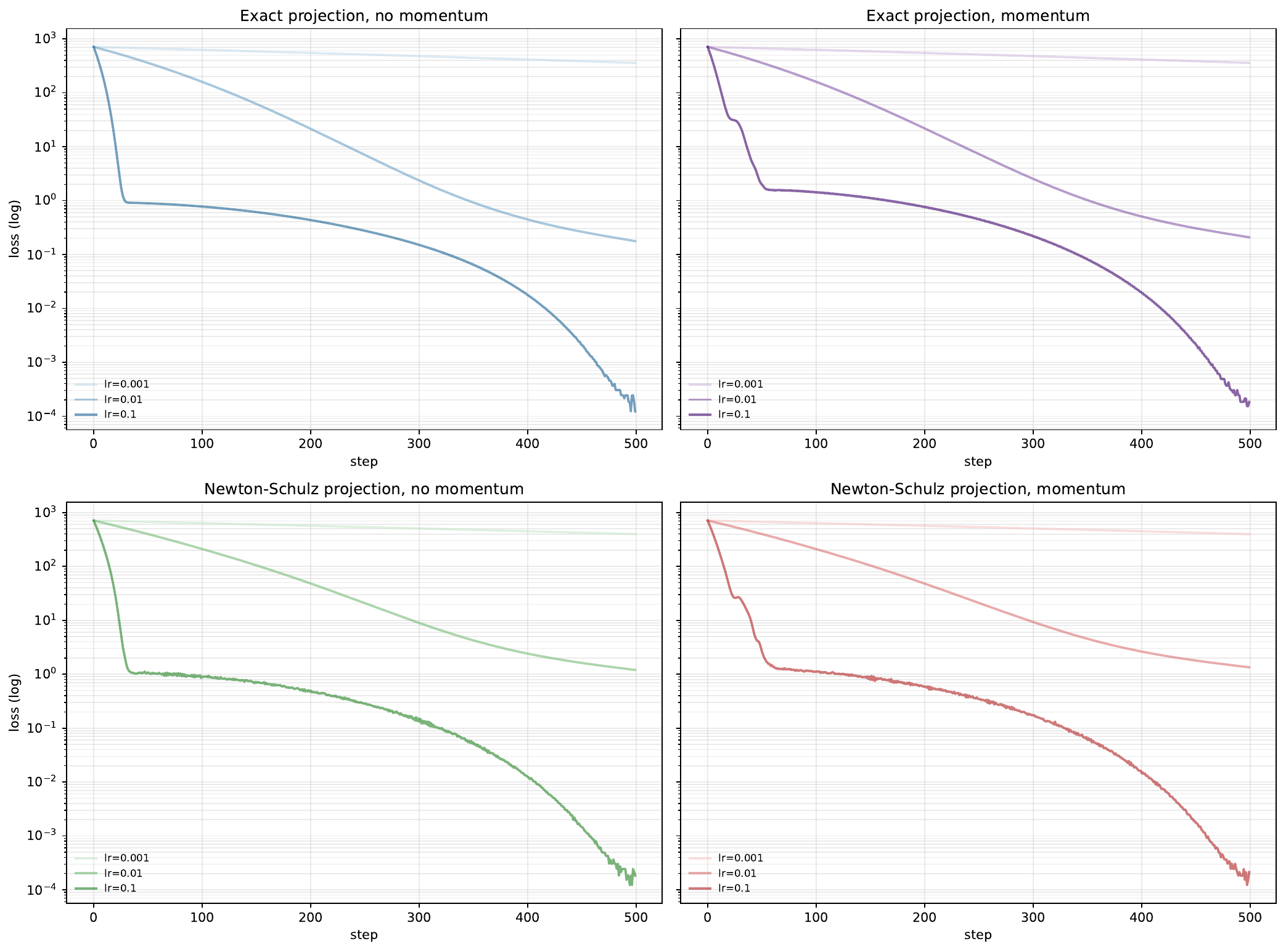}
  \caption{\texttt{uniform} (vanish LR)}
\end{subfigure}

\caption{Sample trajectories for \texttt{gaussian} and \texttt{uniform} spectra (same setting as \Cref{fig:sample-grid-spikes}).}
\label{fig:sample-grid-gauss-unif}
\end{figure}

\begin{figure}[H]
\centering

\begin{subfigure}[t]{0.49\linewidth}
  \centering
  \includegraphics[width=\linewidth]{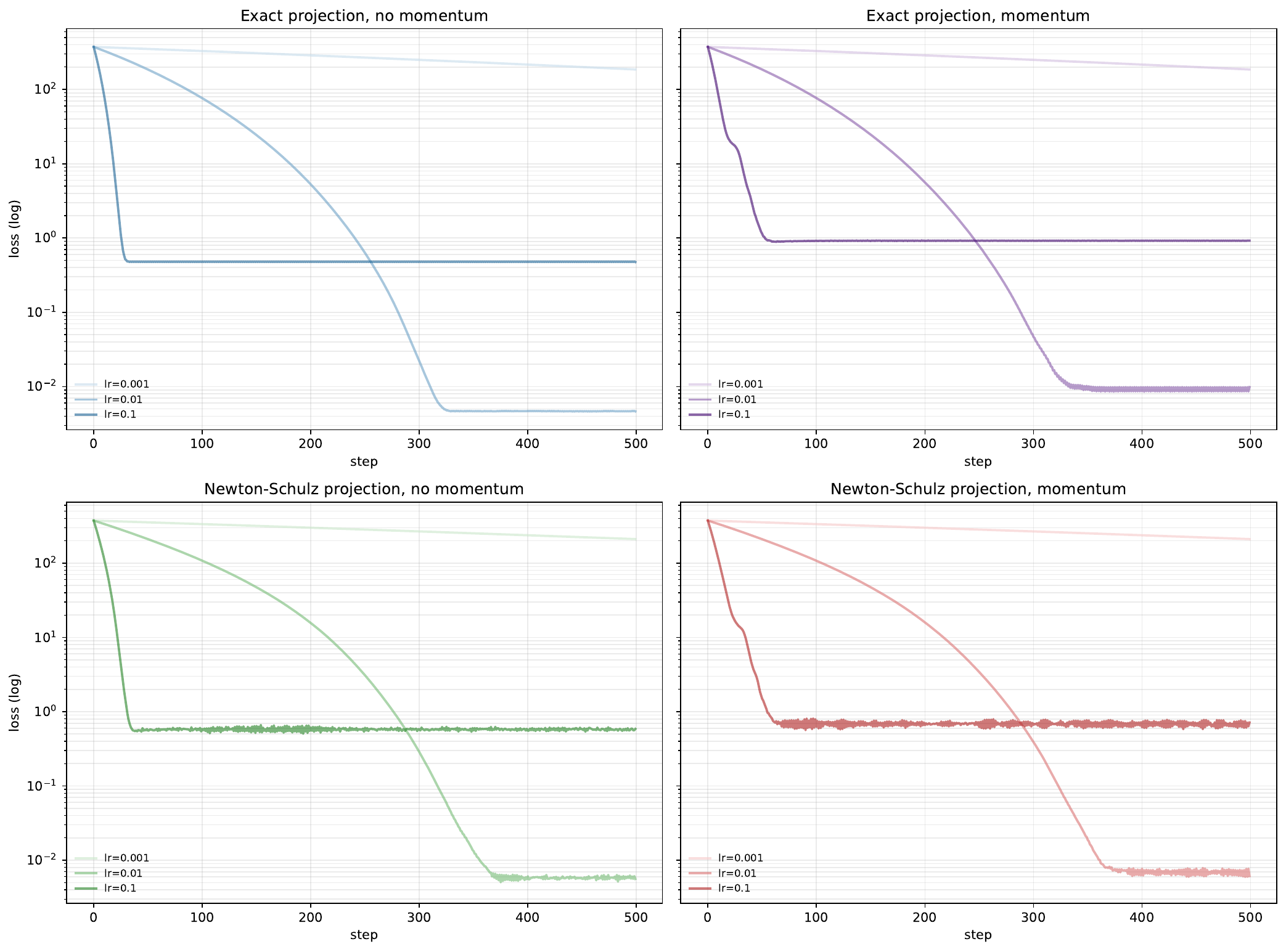}
  \caption{\texttt{linear\_decay\_to\_max} (const LR)}
\end{subfigure}\hfill
\begin{subfigure}[t]{0.49\linewidth}
  \centering
  \includegraphics[width=\linewidth]{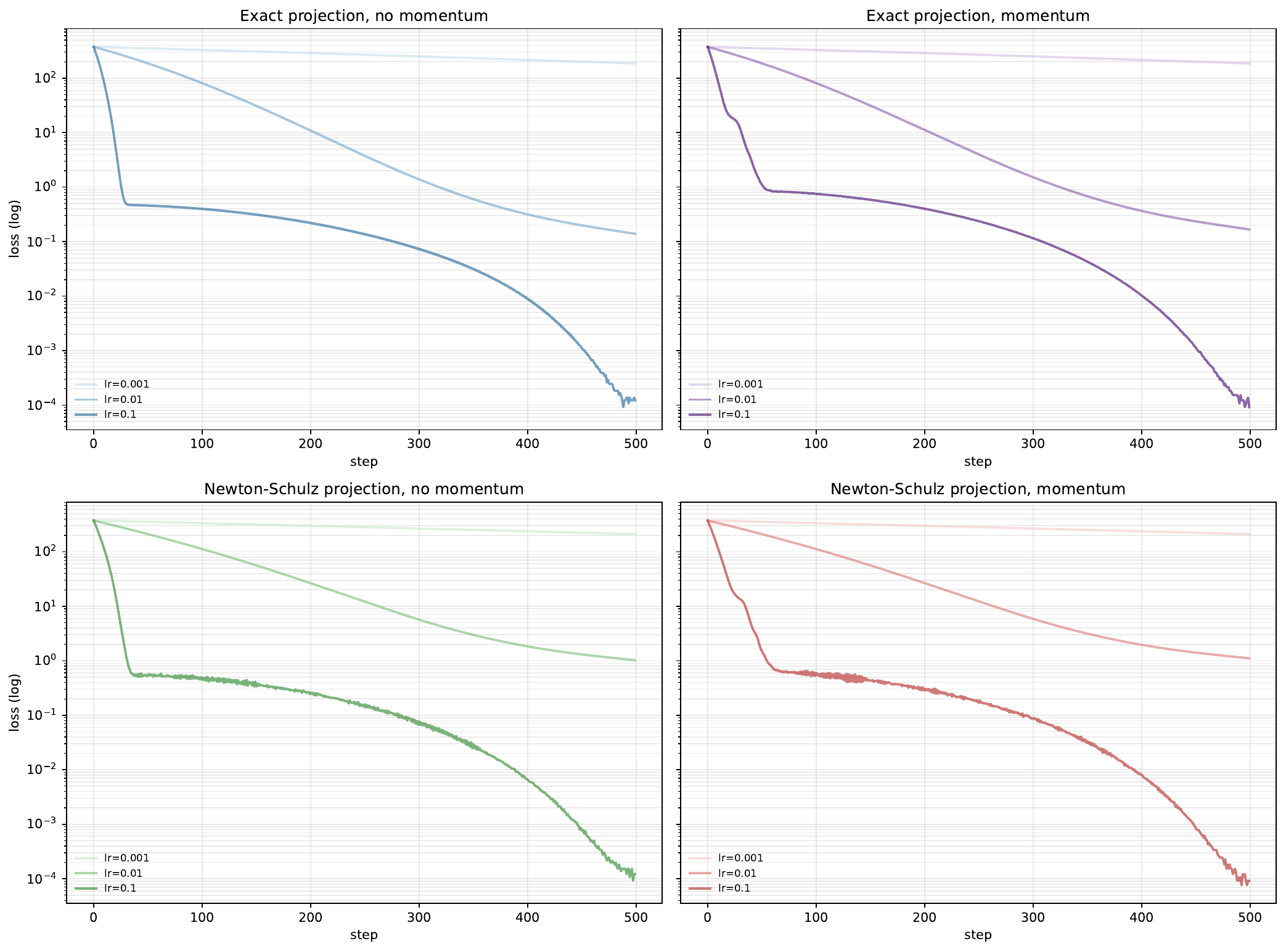}
  \caption{\texttt{linear\_decay\_to\_max} (vanish LR)}
\end{subfigure}

\vspace{0.6em}

\begin{subfigure}[t]{0.49\linewidth}
  \centering
  \includegraphics[width=\linewidth]{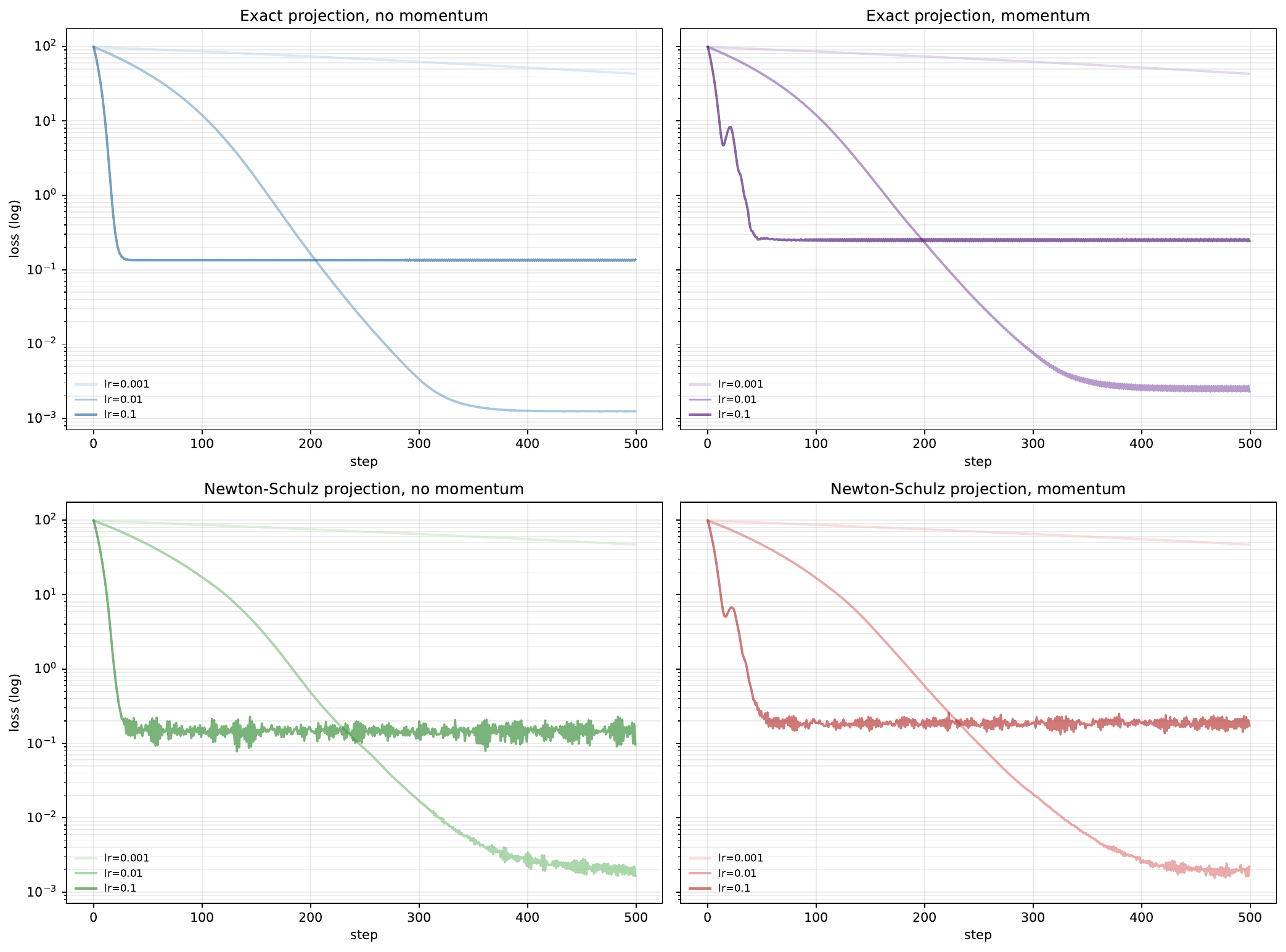}
  \caption{\texttt{geometric\_decay\_to\_max} (const LR)}
\end{subfigure}\hfill
\begin{subfigure}[t]{0.49\linewidth}
  \centering
  \includegraphics[width=\linewidth]{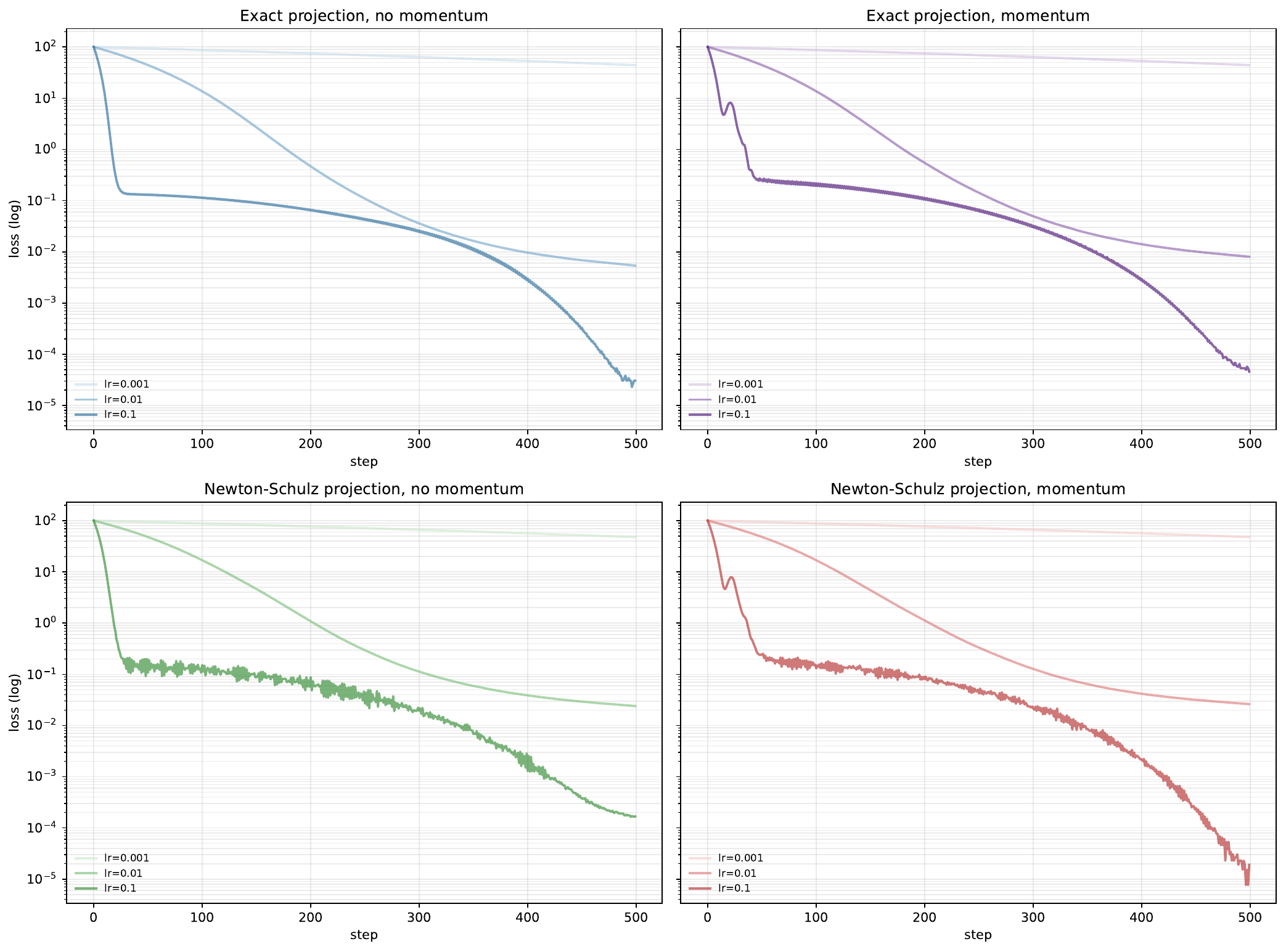}
  \caption{\texttt{geometric\_decay\_to\_max} (vanish LR)}
\end{subfigure}

\caption{Sample trajectories for decaying-spectrum families (same setting as \Cref{fig:sample-grid-spikes}).}
\label{fig:sample-grid-decays}
\end{figure}

\begin{figure}[H]
\centering

\begin{subfigure}[t]{0.49\linewidth}
  \centering
  \includegraphics[width=\linewidth]{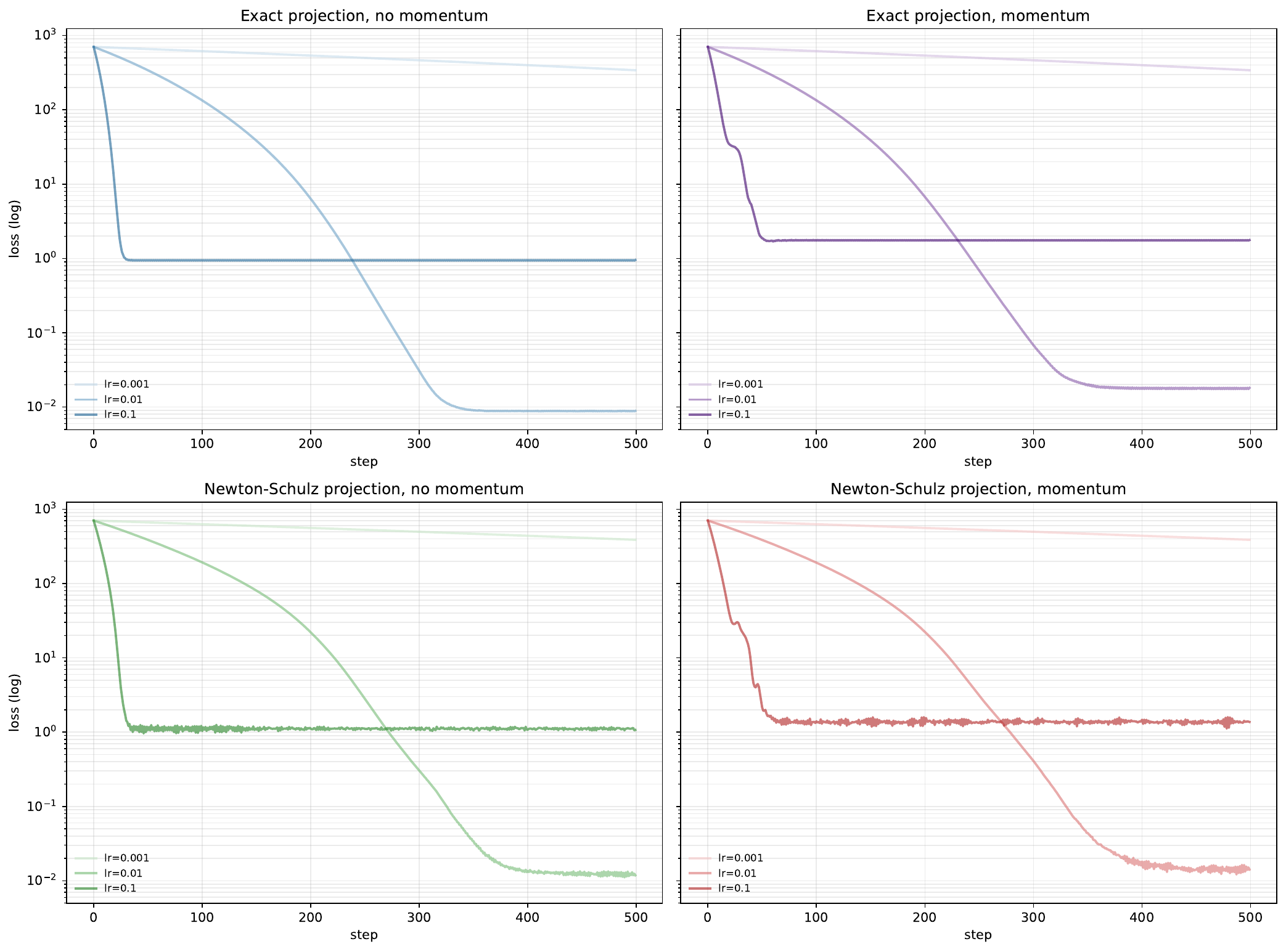}
  \caption{\texttt{u\_shaped} (const LR)}
\end{subfigure}\hfill
\begin{subfigure}[t]{0.49\linewidth}
  \centering
  \includegraphics[width=\linewidth]{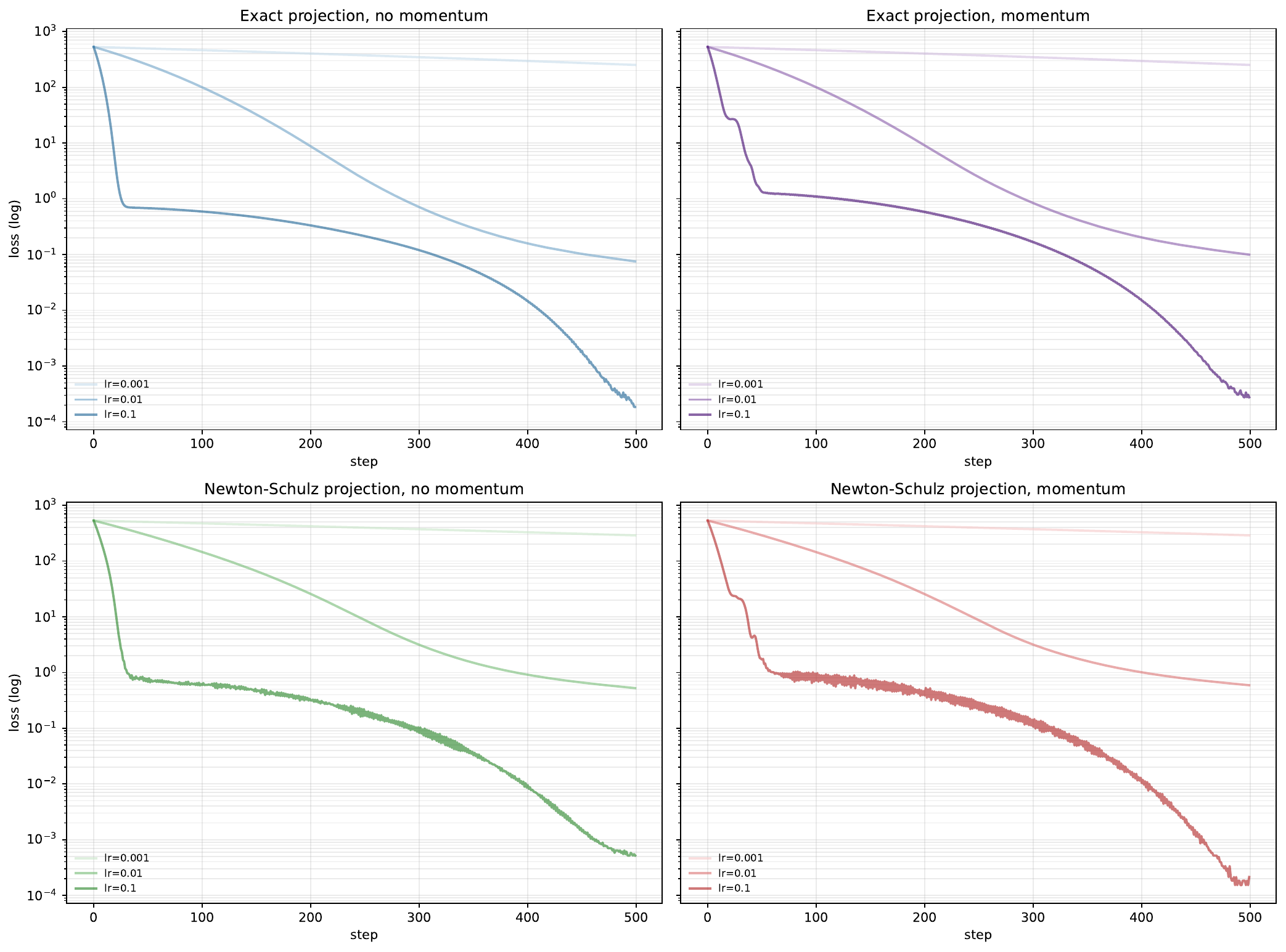}
  \caption{\texttt{u\_shaped} (vanish LR)}
\end{subfigure}

\caption{Sample trajectories for the \texttt{u\_shaped} spectrum (same setting as \Cref{fig:sample-grid-spikes}).}
\label{fig:sample-grid-ushape}
\end{figure}

\subsection{Gradient-conditioning diagnostics}
\label{app:grad-conditioning}

Some practitioner discussions refer to ``conditioning of the gradients along training'' rather than Hessian conditioning.
We report the averaged gradient condition numbers in our controlled-spectrum experiments in Figures~\ref{fig:grad-cond-spikes-ushape}--\ref{fig:grad-cond-geom-unif-linear-gaussian}, together with the averaged loss trajectories. 
The qualitative conclusion matches \Cref{sec:setup-conditioning}. 
Across spectrum families, similar gradient-conditioning profiles 
along classical \GD{} 
can correspond to qualitatively different benefits when switching to \Muon{}.
For instance, the condition number profiles for \texttt{geometric\_decay\_to\_max} and \texttt{uniform} spectra 
are 
similarly decreasing from $10^8$ to $10^7$ (first two rows in \Cref{fig:grad-cond-geom-unif-linear-gaussian}), 
yet switching to \Muon{} is only beneficial for the \texttt{geometric\_decay\_to\_max} case (\Cref{tab:exact-muon-vs-gd-win}). 
This suggests that the gradient conditioning measure alone does not explain the relative speed of \GD{} and \Muon{} on these quadratics.

\begin{figure}[H]
\centering

\includegraphics[width=0.95\linewidth]{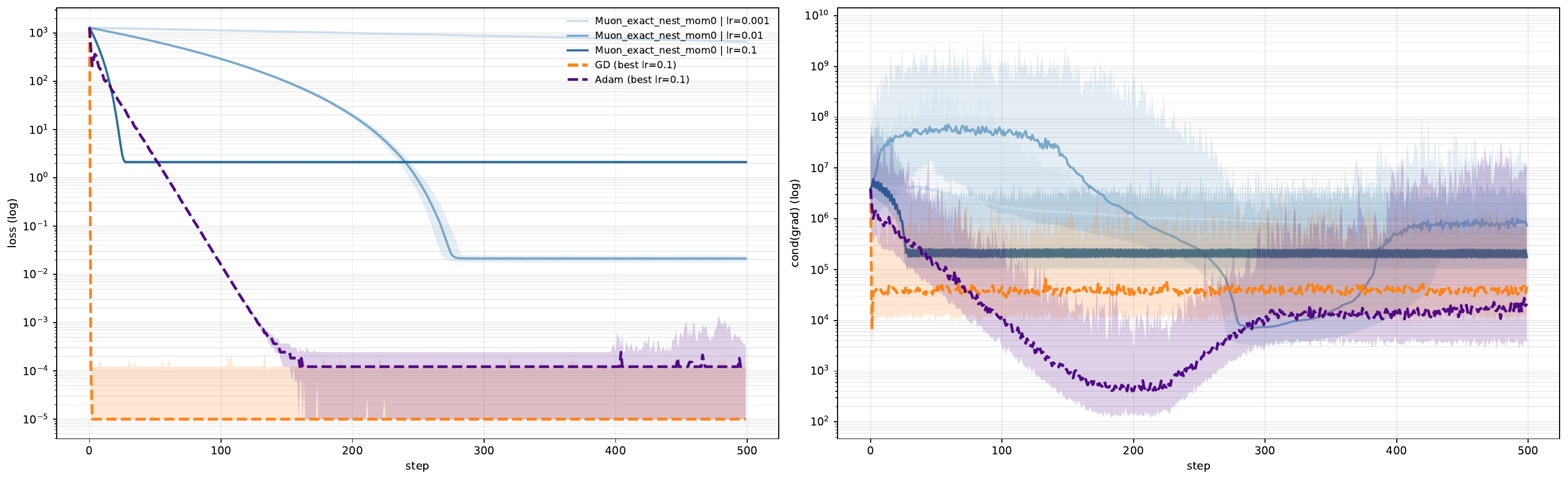}

\vspace{0.8em}

\includegraphics[width=0.95\linewidth]{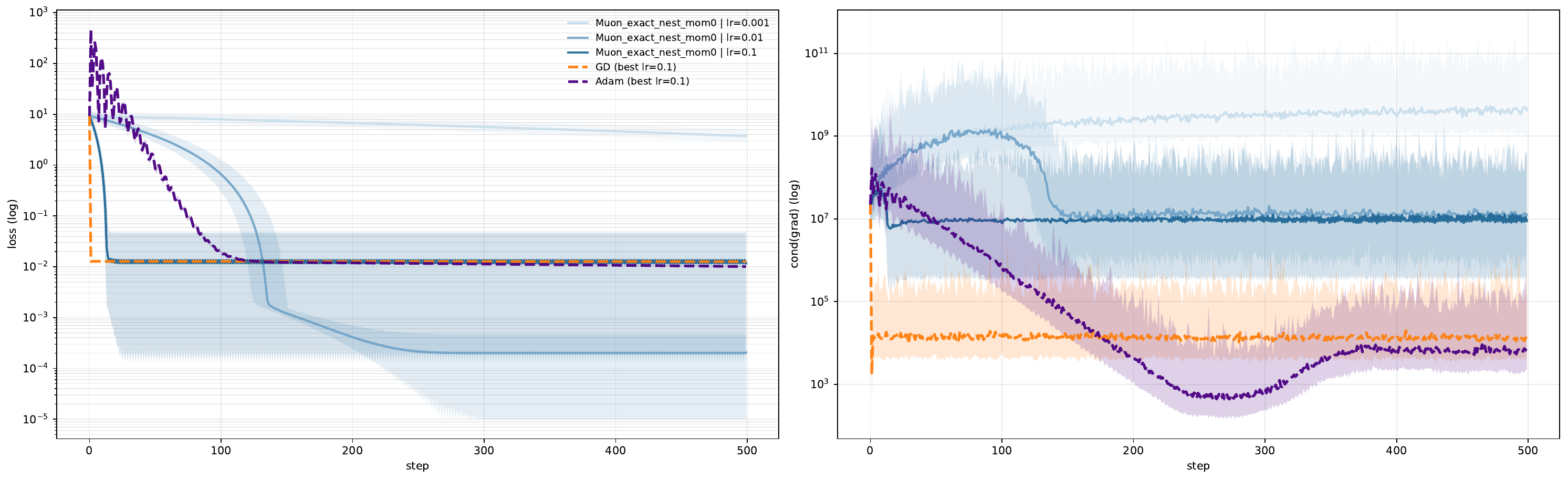}

\vspace{0.8em}

\includegraphics[width=0.95\linewidth]{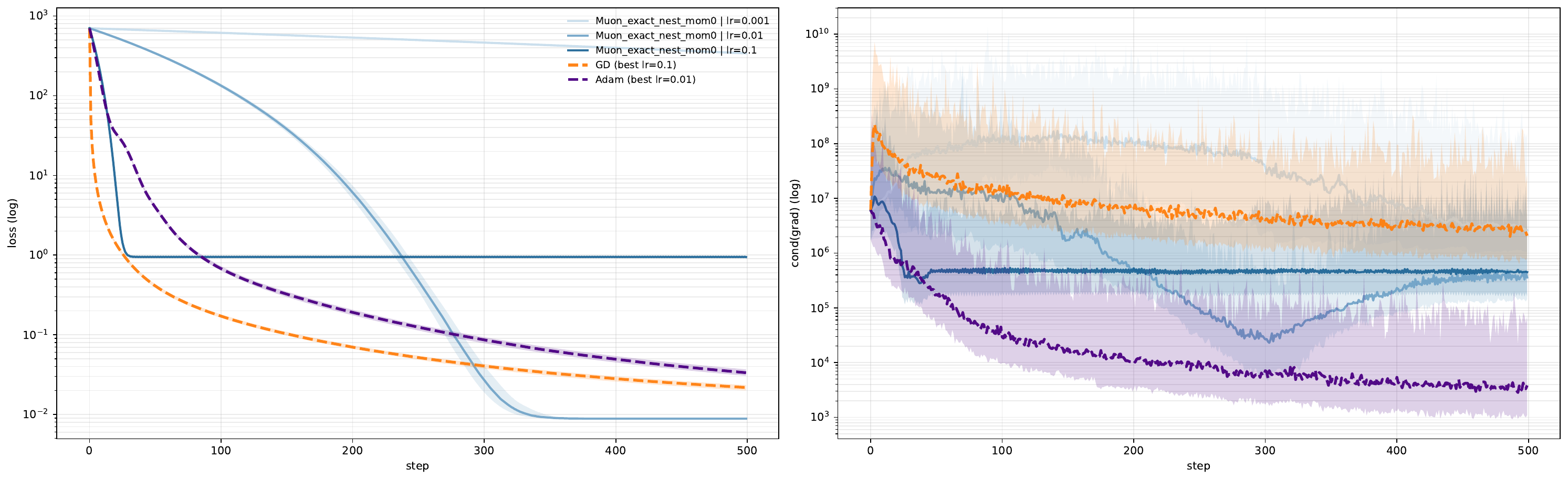}

\caption{
Averaged trajectories for \texttt{max\_spiked} (top), \texttt{min\_spiked} (middle), and \texttt{u\_shaped} (bottom) spectra.
Each panel shows the loss curves (left) and the gradient conditioning measure along the trajectory (right), averaged over random initializations.
}
\label{fig:grad-cond-spikes-ushape}
\end{figure}

\begin{figure}[H]
\centering

\includegraphics[width=0.95\linewidth]{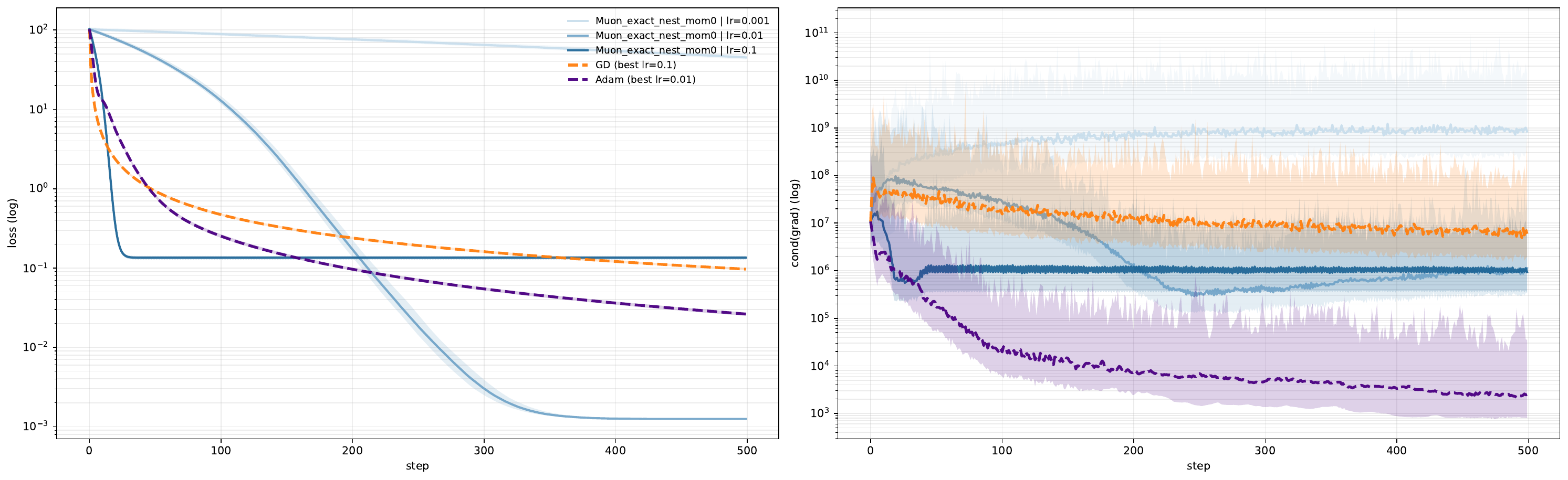}

\vspace{0.8em}

\includegraphics[width=0.95\linewidth]{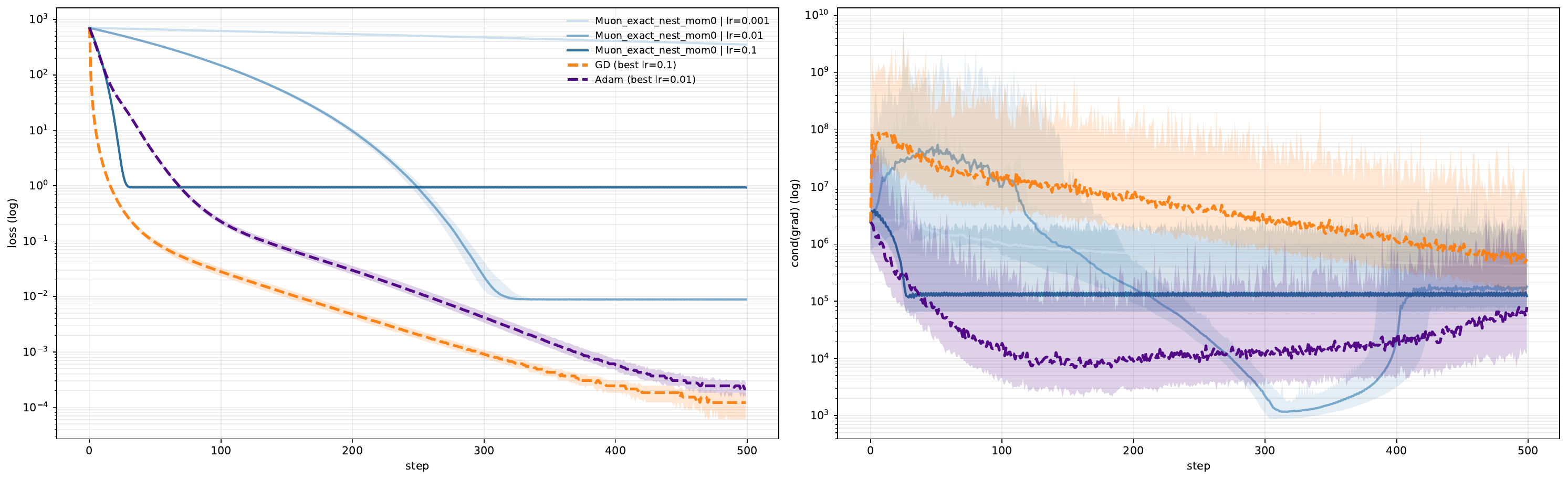}

\vspace{0.8em}

\includegraphics[width=0.95\linewidth]{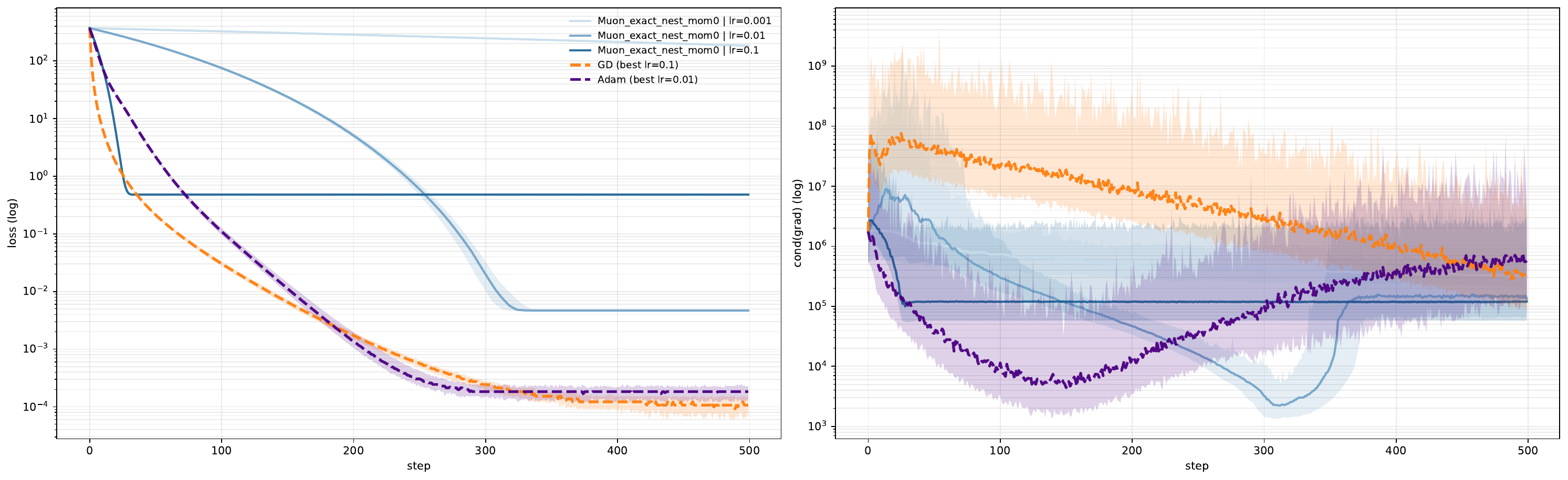}

\vspace{0.8em}

\includegraphics[width=0.95\linewidth]{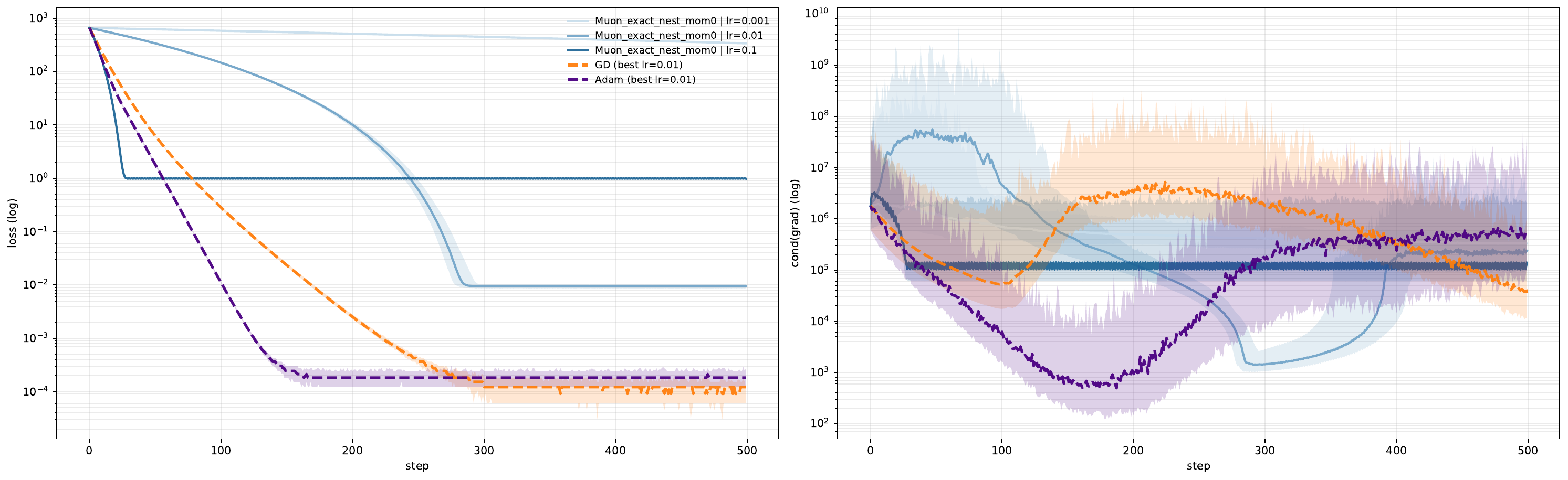}

\caption{
Averaged trajectories for \texttt{geometric\_decay\_to\_max} (top),
\texttt{uniform} (second),
\texttt{linear\_decay\_to\_max} (third),
and \texttt{gaussian} (bottom) spectra.
Each panel shows the loss curves (left) and the gradient conditioning measure along the trajectory (right), averaged over random initializations.
Gradient conditioning alone does not predict which method converges faster.
}
\label{fig:grad-cond-geom-unif-linear-gaussian}
\end{figure}

\section{Noisy Sign Dynamics Results}
\label{app:noisy-sign-proofs}

In this appendix, 
we analyze the one-dimensional noisy sign dynamics
of \Cref{eq:noisy-sign}.
We use a different notation than in the main text:
time is indexed by $n$ instead of $t$,
and the state of the process at time $n$ with noise level $\sigma\ge 0$
is denoted by $w_n^{(\sigma)}$ rather than $\sv_t^{(\sigma)}$.

\begin{proposition}[Noise breaks the deterministic grid trap]
\label{prop:noise-breaks-grid}
Consider the one-dimensional noisy sign dynamics with step size $\stepsize>0$
and noise level $\sigma\ge 0$:
\[
  w_{n+1}^{(\sigma)}
  = w_n^{(\sigma)} - \stepsize\bigl(\sign(w_n^{(\sigma)}) + \sigma \xi_{n+1}\bigr),
  \qquad n \ge 0,
\]
where $(\xi_n)_{n\ge1}$ are i.i.d.\ $\mathcal{N}(0,1)$ random variables and $w_0^{(\sigma)}\in\R$ is fixed. 

Fix $\eps>0$ and define the hitting time
\begin{align}
  T_\eps^{(\sigma)} := \inf\{n \ge 0 : |w_n^{(\sigma)}| \le \eps\}.
  \label{eq:hittingtimefoo}
\end{align}

Assume $\sigma>0$. 
Then, for any initialization $w_0\in\R$,
\[
  \PP\bigl(T_\eps^{(\sigma)} < \infty\bigr) = 1.
\]

By contrast, for the deterministic dynamics ($\sigma = 0$), if the grid
$w_0 + \stepsize\Z$ does not intersect $[-\eps,\eps]$, then
$T_\eps^{(0)} = \infty$ for all such initializations.
\end{proposition}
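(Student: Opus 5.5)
The plan is to show that, when $\sigma>0$, the chain enters $[-\eps,\eps]$ with a probability bounded below uniformly over a compact set of starting points, that it returns to this compact set infinitely often, and then to conclude by a conditional Borel--Cantelli (Lévy) argument or a geometric-trials argument using the strong Markov property. Write $w_n=w_n^{(\sigma)}$ and let $\cF_n=\sigma(\xi_1,\dots,\xi_n)$. Then $(w_n)$ is a time-homogeneous Markov chain. Given $w_n=x$, the next state $w_{n+1}$ is Gaussian with mean $x-\stepsize\sign(x)$ and standard deviation $\stepsize\sigma>0$.

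\textbf{Step 1 (uniform one-step lower bound on a compact set).} Fix $R>0$, to be chosen in Step 2. For $|x|\le R$ the mean $x-\stepsize\sign(x)$ lies in $[-R-\stepsize,R+\stepsize]$. The Gaussian density of $w_{n+1}$ is therefore bounded below on $[-\eps,\eps]$ by $(2\pi)^{-1/2}(\stepsize\sigma)^{-1}\exp\bigl(-(R+\stepsize+\eps)^2/(2\stepsize^2\sigma^2)\bigr)$. Integrating over $[-\eps,\eps]$ gives
\[
p:=\inf_{|x|\le R}\PP\bigl(|w_{n+1}|\le\eps \,\big|\, w_n=x\bigr)>0 .
\]

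\textbf{Step 2 (recurrence to $[-R,R]$).} This is the main obstacle: I must show that $|w_n|\le R$ happens infinitely often almost surely. I would use the drift of $V(x)=|x|$. For $|x|>R$ with $R$ large, $|w_{n+1}|=\bigl||x|-\stepsize-\stepsize\sigma\xi\bigr|$, where $\xi$ is a fresh standard Gaussian. Hence
\[
\E\bigl[|w_{n+1}|\,\big|\,w_n=x\bigr]\le |x|-\stepsize+2\E\bigl[(\stepsize\sigma\xi-(|x|-\stepsize))_+\bigr],
\]
and the last term tends to $0$ as $|x|\to\infty$. Choosing $R$ large enough that it is at most $\stepsize/2$, we get $\E[V(w_{n+1})\mid w_n=x]\le V(x)-\stepsize/2$ whenever $|x|>R$. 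Let $\tau$ be the first entrance time into $[-R,R]$ after any given time $m$. The stopped process $V(w_{n\wedge\tau})+(\stepsize/2)(n\wedge\tau)$ is then a nonnegative supermartingale. Taking expectations gives $\E[\tau-m]\le 2\E|w_m|/\stepsize<\infty$, which is finite because $|w_m|\le|w_0|+m\stepsize+\stepsize\sigma\sum_{k\le m}|\xi_k|$. So $\tau<\infty$ almost surely from every time $m$, and $[-R,R]$ is visited infinitely often almost surely.

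\textbf{Step 3 (conclusion).} Let $\tau_1<\tau_2<\cdots$ be the successive visit times to $[-R,R]$, which are all finite almost surely by Step 2. By the strong Markov property and Step 1, on the event $\{T_\eps^{(\sigma)}>\tau_k\}$ the conditional probability, given $\cF_{\tau_k}$, that $|w_{\tau_k+1}|\le\eps$ is at least $p$. Induction gives $\PP(T_\eps^{(\sigma)}>\tau_k+1)\le(1-p)^k\to0$, so $\PP(T_\eps^{(\sigma)}<\infty)=1$.

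\textbf{Deterministic case ($\sigma=0$).} Every update has the form $\pm\stepsize$, so $w_n\in w_0+\stepsize\Z$ for all $n$, as noted after \eqref{eq:1d-sign-setup}. If this grid misses $[-\eps,\eps]$, no iterate can satisfy $|w_n|\le\eps$, so $T_\eps^{(0)}=\infty$.
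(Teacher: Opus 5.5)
Your proof is correct and has the same skeleton as the paper's. The drift step is the same: $|x|-\stepsize$ plus twice a positive-part term that vanishes by dominated convergence, giving $\delta=\stepsize/2$. So is the stopped supermartingale $|w_{n\wedge\tau}|+\delta\,(n\wedge\tau)$, and the conclusion is likewise a repeated-trials argument via the strong Markov property.

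The genuine difference is the step that gives a uniform success probability from $[-R,R]$. The paper builds a multi-step ``contraction window'': on the event that about $R/c$ consecutive noises satisfy $|\xi_k|\le M$, the iterate is carried into $B=[-\eps-C,\eps+C]$, and only then is a Gaussian landing probability used. You show this detour is unnecessary. The transition is Gaussian with mean in $[-R-\stepsize,R+\stepsize]$, so its density is bounded below on $[-\eps,\eps]$ uniformly over $|x|\le R$, and a single step already succeeds with probability $p>0$. This is the paper's landing argument applied directly with $B=[-R,R]$. Your explicit density bound also avoids the paper's continuity-plus-compactness step, which needs care at $x=0$ where $\sign$ jumps.

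What the paper's window buys is only quantitative. It mirrors the drift-then-land mechanism and, done carefully, gives a per-attempt probability exponentially small in $R$ rather than in $R^2/(\stepsize\sigma)^2$. Nothing quantitative is needed for an almost-sure statement, so your route is the more economical one.

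Two smaller differences are also sound. You get infinitely many returns to $[-R,R]$ from every deterministic time $m$ using $\E|w_m|<\infty$, whereas the paper restarts at random times via the strong Markov property. Your induction over $\{T_\eps^{(\sigma)}>\tau_k\}\in\cF_{\tau_k}$, which works because $\tau_{k-1}+1\le\tau_k$, is a clean rendering of the iterated conditioning that the paper writes as a product of conditional probabilities.
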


\begin{proof}[Proof idea for \Cref{prop:noise-breaks-grid}]
  (Full proof in \Cref{sec:noise-breaks-grid}.)
The proof combines a drift-to-compact argument
with a uniform ``attempt'' mechanism:
once the process enters a fixed neighborhood of the origin, 
there is a state-uniform positive probability to land in $[-\eps,\eps]$ 
within a bounded number of steps, 
and repeated returns to that neighborhood 
(which happen almost surely because of the drift)
guarantee eventual success.
\end{proof}

\begin{proposition}[Small- and large-noise have long hitting times]
\label{prop:small-large-noise-long-hitting}
Assume the deterministic iteration
$
  w_{n+1}^{(0)} = w_n^{(0)} - \stepsize \sign(w_n^{(0)})
$
satisfies
$
  |w_n^{(0)}| \ge \eps + \Delta, \forall n\ge 0,
$
for some margin $\Delta>0$; that is, the step grid misses the target interval.
Consider, for $\sigma\ge 0$,
\[
  w_{n+1}^{(\sigma)}
  = w_n^{(\sigma)} - \stepsize\bigl(\sign(w_n^{(\sigma)}) + \sigma\xi_{n+1}\bigr),
  \qquad w_0^{(\sigma)} = w_0,
\]
where $(\xi_n)$ are i.i.d.\ standard Gaussian random variables. 

Let $T_\eps^{(\sigma)}$ denote the hitting time of $[-\eps,\eps]$ as in~\eqref{eq:hittingtimefoo}. 
For any positive integer $N$, we have
\[
  \lim_{\sigma\to 0}
  \PP\left(T_\eps^{(\sigma)} > N\right)
  =
  \lim_{\sigma\to \infty}
  \PP\left(T_\eps^{(\sigma)} > N\right)
  = 1.
\]
\end{proposition}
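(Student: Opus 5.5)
Since $N$ is fixed, both limits reduce to statements about the finitely many random variables $w_0^{(\sigma)},\dots,w_N^{(\sigma)}$. It suffices to show that for each fixed $n\le N$, $\PP(|w_n^{(\sigma)}|\le\eps)\to 0$ in both regimes. A union bound then gives $\PP(T_\eps^{(\sigma)}\le N)\le\sum_{n=0}^N \PP(|w_n^{(\sigma)}|\le\eps)\to0$. (For $n=0$ the term is zero, since $|w_0|\ge\eps+\Delta$.)

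\textbf{Small noise ($\sigma\to0$).} We couple all noise levels on the same Gaussian sequence $(\xi_n)$ and show by induction on $n\le N$ that $w_n^{(\sigma)}\to w_n^{(0)}$ almost surely. The map $w\mapsto w-\stepsize\sign(w)$ is continuous away from $0$. The deterministic iterates satisfy $|w_n^{(0)}|\ge\eps+\Delta>0$, so they stay at distance at least $\eps+\Delta$ from the discontinuity. Hence, if $|w_n^{(\sigma)}-w_n^{(0)}|<\eps+\Delta$, the signs agree, and
\[
w_{n+1}^{(\sigma)}-w_{n+1}^{(0)} = \bigl(w_n^{(\sigma)}-w_n^{(0)}\bigr) - \stepsize\sigma\xi_{n+1}.
\]
On the event where these signs agree up to time $n$, this gives the explicit bound $|w_n^{(\sigma)}-w_n^{(0)}|\le \stepsize\sigma\sum_{k\le n}|\xi_k|$. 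Consequently
\[
\PP\Bigl(\exists n\le N:\ |w_n^{(\sigma)}|\le\eps\Bigr)\le \PP\Bigl(\stepsize\sigma\textstyle\sum_{k\le N}|\xi_k|\ge\Delta\Bigr),
\]
and the right-hand side tends to $0$ as $\sigma\to0$. This is where the margin $\Delta$ enters, and it is exactly what makes the discontinuity of $\sign$ harmless.

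\textbf{Large noise ($\sigma\to\infty$).} For $n\ge1$, condition on $\cF_{n-1}=\sigma(\xi_1,\dots,\xi_{n-1})$. Given $\cF_{n-1}$, the variable $w_n^{(\sigma)} = w_{n-1}^{(\sigma)}-\stepsize\sign(w_{n-1}^{(\sigma)}) - \stepsize\sigma\xi_n$ is Gaussian with variance $\stepsize^2\sigma^2$, because $\xi_n$ is independent of $\cF_{n-1}$. A Gaussian density with standard deviation $\stepsize\sigma$ is bounded by $1/(\sqrt{2\pi}\stepsize\sigma)$, whatever its mean. Therefore
\[
\PP\bigl(|w_n^{(\sigma)}|\le\eps\mid\cF_{n-1}\bigr)\le \frac{2\eps}{\sqrt{2\pi}\,\stepsize\sigma}.
\]
Taking expectations and summing over $n\le N$ gives $\PP(T_\eps^{(\sigma)}\le N)\le 2N\eps/(\sqrt{2\pi}\stepsize\sigma)\to0$. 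This step does not need $\Delta$ at all.

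The only delicate point is the small-noise coupling, namely controlling the sign discontinuity. The margin assumption handles it directly, so I do not expect a real obstacle. One can also note that $N$ fixed is essential, because the bounds in both regimes grow with $N$.
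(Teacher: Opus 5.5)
Your proof is correct and follows essentially the same route as the paper. The large-noise part is identical: a union bound plus the uniform bound $2\eps/(\sqrt{2\pi}\stepsize\sigma)$ on the conditional Gaussian probability. The small-noise part uses the same coupling and the same margin-based sign-agreement induction, differing only in how the deviation is bounded. You use $\stepsize\sigma\sum_{k\le N}|\xi_k|$ with threshold $\Delta$, so Markov's inequality gives an $O(\sigma)$ rate. The paper uses the exact deviation $-\stepsize\sigma S_n$ with threshold $\Delta/2$ and Kolmogorov's maximal inequality, which gives $O(\sigma^2)$.
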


\begin{proof}[Proof idea for \Cref{prop:small-large-noise-long-hitting}]
  (Full proof in \Cref{sec:small-large-noise-long-hitting}.)
Informally, for very small noise the process shadows 
the deterministic two-cycle for a long time before 
accumulated fluctuations are large enough 
to sufficiently escape the lattice and hit the target interval. 
For very large noise, 
the update becomes dominated by the random term $\sigma\xi_{n+1}$, and the chain behaves like a 
rapidly diffusing random walk whose steps are so large that it frequently overshoots the small target interval $[-\eps,\eps]$.  
\end{proof}

\subsection{Proof of \Cref{prop:noise-breaks-grid}} \label{sec:noise-breaks-grid}

The case $\sigma=0$ is clear since the dynamics live on the grid
$w_0 + \stepsize\Z$, so if that grid misses $[-\eps,\eps]$ then the process never hits that interval. 
Now, fix $\sigma>0$.

\paragraph{Step 1: a drift-to-compact lemma.}
We show that there exists $R>0$ such that, for any starting point $w_0^{(\sigma)}$,
the hitting time
\[
\tau_R:=\inf\{n\ge0:\ |w_n^{(\sigma)}|\le R\}
\]
is almost surely finite.

It suffices to prove that there exist constants $R>0$ and $\delta>0$ such that, for all $|w|\ge R$,
\begin{equation}
\label{eq:drift-compact}
\E\!\left[\,|w_1^{(\sigma)}|-|w_0^{(\sigma)}| \,\middle|\, w_0^{(\sigma)}=w\right]\le -\delta.
\end{equation}
Indeed, let $\cF_n:=\sigma(\xi_1,\dots,\xi_n)$ be the natural filtration, 
and consider the stopped process
\[
Z_n := |w_{\min(n, \tau_R)}^{(\sigma)}|+\delta \min(n,\tau_R).
\]
We claim that  $\E\!\left[ Z_{n+1}\mid \cF_n\right]\le Z_n$ for all $n\ge0$, 
i.e., $(Z_n)_{n\ge0}$ is a supermartingale with respect to $(\cF_n)_{n\ge0}$. 

To see this, consider the two cases:
\begin{itemize}
  \item On the event $\{\tau_R\le n\}$, the process is already stopped, so $Z_{n+1}=Z_n$ 
  and 
  \[
  \E\!\left[ Z_{n+1} 1_{\{\tau_R\le n\}} \mid \cF_n\right] = \E\!\left[ Z_{n} 1_{\{\tau_R\le n\}} \mid \cF_n\right] = Z_n 1_{\{\tau_R\le n\}}.
    \] 
  since $\{\tau_R \le n\}$ is $\cF_n$-measurable. 
  \item On the event $\{\tau_R> n\}$, we have $\min(n+1,\tau_R)=n+1$ 
  so that pointwise
  \[
  Z_{n+1}-Z_n = 1_{\{\tau_R> n\}} \Bigl(\,|w_{n+1}^{(\sigma)}|-|w_n^{(\sigma)}|+\delta\Bigr).
  \]
  Taking conditional expectation given $\cF_n$ and using that $1_{\{\tau_R> n\}}$ is $\cF_n$-measurable,
  \[
\E[Z_{n+1}-Z_n\mid \cF_n]
=
1_{\{\tau_R> n\}} \Bigl(\E[\,|w_{n+1}^{(\sigma)}|-|w_n^{(\sigma)}| \mid \cF_n] + \delta\Bigr).
  \]
  Define the one-step update map
\[
\Phi(x,u) := x - \stepsize\bigl(\sign(x)+\sigma u\bigr),
\]
so that $w_{n+1}^P=\Phi(w_n,\xi_{n+1})$, where we omit the superscript $(\sigma)$ for brevity. 

Let $g:\R\to\R$ be any measurable function with $\E|g(w_{n+1})|<\infty$.
Since $w_n$ is $\cF_n$-measurable, and $\xi_{n+1}$ is independent of $\cF_n$ with the same distribution as $\xi_1$,
we have:
\begin{equation}
\label{eq:kernel-step}
\E\!\left[g(w_{n+1})\mid \cF_n\right]
=
\psi_g(w_n)
\quad\text{a.s.},
\qquad
\psi_g(x):=\E\!\left[g(\Phi(x,\xi_1))\right] = \E\!\left[g(w_1)\mid w_0=x\right].
\end{equation}
Applying \eqref{eq:kernel-step} with $g(y)=|y|$, we get
\[
\E\!\left[|w_{n+1}|-|w_n|\mid \cF_n\right]
=
\E\!\left[|w_{1}| -|w_0|\mid w_0=w_n\right]
\]
The event $\{n<\tau_R\}$ is $\cF_n$-measurable and implies $|w_n|>R$.
Therefore, on $\{n<\tau_R\}$ we may apply \eqref{eq:drift-compact} at the (random) starting point $w=w_n$:
\[
\E\!\left[\,|w_1|-|w_0| \,\middle|\, w_0=w_n\right] \le -\delta.
\]
This yields
\[
\mathbf 1_{\{n<\tau_R\}}\E\!\left[\,|w_{n+1}|-|w_n| \,\middle|\, \cF_n\right]
\le
-\delta\,\mathbf 1_{\{n<\tau_R\}}.
\]
\end{itemize}
Combining the two cases, we get $\E\!\left[ Z_{n+1}\mid \cF_n\right]\le Z_n$. 
Taking expectations yields $\E[Z_{n+1}]\le \E[Z_n]\le \cdots \le \E[Z_0] = |w_0^{(\sigma)}|$.
In particular, $\delta\,\E[\min(n,\tau_R)]\le \E[Z_n]\le |w_0^{(\sigma)}|$ for all $n\ge0$. 
Letting $n\to\infty$ and using monotone convergence gives 
$\E[\tau_R] = \lim_{n\to\infty} \E[ \min(n,\tau_R)]
\le |w_0^{(\sigma)}|/\delta<\infty$,
hence $\PP(\tau_R<\infty)=1$. 

We now verify \eqref{eq:drift-compact}. We keep omitting the superscript $(\sigma)$ for brevity. 
Let $\Delta:=\stepsize(1+\sigma\xi_1)$. By symmetry it suffices to consider starting at $w>0$, in which case
$w_1=w-\Delta$ and
\[
|w_1|=|w-\Delta|=(w-\Delta)+2(\Delta-w)_+,
\qquad (a)_+:=\max(0,a).
\]
Taking expectations over the centered noise $\xi_1$, 
we have  $\E[\Delta]=\stepsize$ and thus
\begin{equation}
\label{eq:abs-decomp}
\E[|w_1| \mid w_0=w]= w-\stepsize + 2\,\E[(\Delta-w)_+].
\end{equation}
We claim that $\E[(\Delta-w)_+]\to 0$ as $w\to\infty$.
For $w\ge1$, define $g_w(\Delta):=(\Delta-w)_+$. Then $g_w(\Delta)\downarrow 0$ pointwise if $w\to\infty$ and
$0\le g_w(\Delta)\le \Delta_+$, where $\E[\Delta_+]<\infty$ since $\Delta$ is Gaussian.
By dominated convergence, $\E[(\Delta-w)_+]\to0$.

Therefore we may choose $R$ large enough so that for all $w\ge R$,
$2\,\E[(\Delta-w)_+]\le \stepsize/2$. Plugging this into \eqref{eq:abs-decomp} yields
\[
\E[|w_1|\mid w_0=w]\le w-\stepsize/2 \qquad (\forall w\ge R),
\]
and by symmetry the same bound holds for $w\le -R$.
Thus \eqref{eq:drift-compact} holds with $\delta:=\stepsize/2$.

\paragraph{Step 2: a uniform bounded-noise contraction window.}  
We now show that starting from any $|w_0^{(\sigma)}|\le R$, 
there is a positive probability of hitting $[-\eps,\eps]$ in a fixed number of steps 
which does not depend on the starting point. 

Pick any $M>0$ such that $c:=\stepsize(1-\sigma M)>0$.  
Fix $n\ge0$. On the event $\{|\xi_{n+1}|\le M\}$ we have 
\begin{equation}
\label{eq:one-step-bounds}
  w_{n+1}^{(\sigma)} \le w_n^{(\sigma)} - c \ \text{ if } w_n^{(\sigma)} > 0,
  \qquad
  w_{n+1}^{(\sigma)} \ge w_n^{(\sigma)} + c \ \text{ if } w_n^{(\sigma)} < 0,
\end{equation}
i.e., the process moves towards zero by at least $c$ in that case. 
In $L$ steps, this ensures a movement of at least $Lc$ towards zero. 
If $L> R/c$, starting from $|w_0^{(\sigma)}|\le R$ and with
$|\xi_1|,\dots,|\xi_L|\le M$, the process must cross 
the level $\pm \eps$ for some
$n\le L$. 
At the crossing time, 
since the one-step move is at most
\begin{equation}
\label{eq:step-bound}
  |w_{n}^{(\sigma)} - w_{n-1}^{(\sigma)}|
  = \stepsize|1+\sigma\xi_n|
  \le \stepsize(1+\sigma M) =: C,
\end{equation}
the pre-crossing iterate lies in $[-\eps-C,\eps+C]$. 
We now show that from there, there is a positive probability of landing in $[-\eps,\eps]$ at the next step. 

Define the compact interval $B:=[-\eps-C,\eps+C]$. For any $x\in B$, the one-step transition is
\[
w_1^{(\sigma)} = x - \stepsize\sign(x) - \stepsize\sigma\xi_1,
\]
so $w_1^{(\sigma)}$ has a Gaussian density with variance $(\stepsize\sigma)^2$.
Hence $\PP_x(w_1^{(\sigma)}\in[-\eps,\eps])>0$ for every $x\in B$.
Moreover, $x\mapsto \PP_x(w_1^{(\sigma)}\in[-\eps,\eps])$ is continuous
(it can be expressed in terms of Gaussian CDFs), so by compactness,
\[
p' := \inf_{x\in B}\PP_x\bigl(w_1^{(\sigma)}\in[-\eps,\eps]\bigr) > 0.
\]

Consequently, defining $L_R := \lceil R/c \rceil$, $p := \PP(|\xi_1|\le M,\dots,|\xi_{L_R-1}|\le M)>0$, and
\[
  q := p^{L_R}p' \in (0,1),
\]
we just showed that on the event
$\{|\xi_1|\le M,\dots,|\xi_{L_R-1}|\le M\}$, starting from any $x$ with $|x|\le R$,
we reach $[-\eps,\eps]$ with probability at least $q$ in $L_R$ steps:
\begin{equation}
\label{eq:one-attempt-success}
\inf_{|x|\le R}  
\PP\Bigl(\exists n\in\{1,\dots,L_R\}:\ w_n^{(\sigma)}\in[-\eps,\eps] \mid w_0^{(\sigma)}=x\Bigr)
  \ge q.
\end{equation}

\paragraph{Step 3: successive attempts and independence.} 
We now combine the two previous steps to show that
$T_\eps^{(\sigma)}<\infty$ almost surely: 
we repeatedly wait for the process to return 
to $[-R,R]$ (which happens a.s.\ by Step 1),
then we make an attempt of length $L_R$ to hit $[-\eps,\eps]$. 
Each attempt succeeds with probability at least $q$ 
independently of the past, so eventually one attempt must succeed a.s. 

Define the successive \emph{attempt start times} $(S_k)_{k\ge 0}$ and end times
$T_k:=S_k+L_R$ recursively by
\[
  S_0 := \tau_R,
  \qquad
  S_{k+1} := \inf\{n\ge T_k:\ |w_n^{(\sigma)}|\le R\},
  \qquad
  T_k := S_k+L_R.
\]

\medskip
\noindent\textbf{Claim: $S_k<\infty$ a.s.\ for all $k$.}
We prove this by induction.
Step~1 gives $S_0=\tau_R<\infty$ a.s.
Assume $S_k<\infty$ a.s.\ for some $k$. Then $T_k=S_k+L_R<\infty$ a.s. as well.
On the event $\{T_k<\infty\}$, apply the strong Markov property at time $T_k$:
conditionally on $\cF_{T_k}$
(the sigma-algebra generated by $(\xi_1,\dots,\xi_{T_k})$ \citep[Def 12.8]{LeGall2022}), 
the post-$T_k$ chain
$(w_{T_k+n}^{(\sigma)})_{n\ge0}$ has the same law as the chain started from
$w_{T_k}^{(\sigma)}$. In particular,
\[
\PP\!\left(S_{k+1}<\infty \,\middle|\, \cF_{T_k}\right)
=
\PP_{w_{T_k}^{(\sigma)}}(\tau_R<\infty)
\qquad\text{a.s. on }\{T_k<\infty\}.
\]
Since Step~1 holds for \emph{any} initial condition, we have
$\PP_x(\tau_R<\infty)=1$ for every $x\in\R$, hence
\[
\PP\!\left(S_{k+1}<\infty \,\middle|\, \cF_{T_k}\right)=1
\qquad\text{a.s.}
\]
Taking expectations yields $\PP(S_{k+1}<\infty)=1$, completing the induction.

\medskip
To conclude the proof, we now show that at least one attempt succeeds a.s. 
The argument essentially relies on the conditional independence of the attempts $A_k$ defined below. 

For each $k$, consider the event that the $k$-th attempt succeeds:
\[
  A_k := \Bigl\{\exists n\in\{S_k+1,\dots,T_k\}:\ w_n^{(\sigma)}\in[-\eps,\eps]\Bigr\}.
\]
It depends only on $(\xi_{S_k+1},\dots,\xi_{T_k})$ and on $w_{S_k}^{(\sigma)}$.
Conditioned on $\cF_{S_k}$, the noises $(\xi_{S_k+1},\dots,\xi_{T_k})$ are
independent of $\cF_{S_k}$ and i.i.d.\ Gaussian, hence by Strong Markov property and \eqref{eq:one-attempt-success},
\begin{equation}
\label{eq:conditional-success}
  \PP(A_k \mid \cF_{S_k}) \ge q \qquad \text{a.s.}
\end{equation}
Note that $\PP(A_k\mid \cF_{S_k})$ is an $\cF_{S_k}$-measurable random variable (a conditional success probability given the past 
up to time $S_k$). 
Therefore,
\[
  \PP\Bigl(\bigcap_{k=0}^N A_k^c\Bigr)
  = \E\Bigl[\prod_{k=0}^N \PP(A_k^c \mid \cF_{S_k})\Bigr]
  \le (1-q)^{N+1}.
\]
Letting $N\to\infty$ yields
\[
  \PP\Bigl(\forall k\ge 0,\ A_k^c\Bigr)=0,
\]
so with probability one, some attempt succeeds, i.e., $T_\eps^{(\sigma)}<\infty$
almost surely. This proves $\PP(T_\eps^{(\sigma)}<\infty)=1$.

\subsection{Proof of \Cref{prop:small-large-noise-long-hitting}} \label{sec:small-large-noise-long-hitting}

We treat the small-noise and large-noise regimes separately.

\paragraph{Small-noise regime \texorpdfstring{$\sigma\to 0$}{sigma→0}.}
Fix $N>0$, consider $S_n := \sum_{k=1}^n \xi_k$, and define the event
\[
  A_\sigma
  := \left\{
       \max_{1\le n\le N}
       \bigl|\sigma S_n\bigr|
       \le \frac{\Delta}{2\stepsize}
     \right\}.
\]
We claim that on $A_\sigma$ the noisy 
and deterministic iterates have the same sign
up to time $N$ if they start from the same initialization:
\begin{equation}
\label{eq:dev-identity}
  w_n^{(\sigma)}-w_n^{(0)} = -\stepsize\sigma S_n
  \qquad \text{for all } n\le N.
\end{equation}
Indeed, argue by induction. For $n=0$ it is trivial.
Assume the signs agree up to time $n-1$. Then \eqref{eq:dev-identity} holds at time $n-1$,
so on $A_\sigma$ we have
\[
  |w_{n-1}^{(\sigma)}-w_{n-1}^{(0)}|
  \le \frac{\Delta}{2}.
\]
Since $|w_{n-1}^{(0)}|\ge \eps+\Delta$ by assumption, this implies
$\sign(w_{n-1}^{(\sigma)})=\sign(w_{n-1}^{(0)})$, completing the induction.

Therefore, on $A_\sigma$, for all $n\le N$,
\[
  |w_n^{(\sigma)}|
  \ge |w_n^{(0)}| - \stepsize\sigma |S_n|
  \ge (\eps+\Delta) - \frac{\Delta}{2}
  = \eps + \frac{\Delta}{2}
  > \eps,
\]
so $T_\eps^{(\sigma)}>N$ on $A_\sigma$, and thus
\[
  \PP(T_\eps^{(\sigma)} \le N) \le \PP(A_\sigma^c).
\]
By Kolmogorov's inequality and $\E[\xi_1^2]=1$,
\[
  \PP(A_\sigma^c)
  = \PP\left(
      \max_{1\le n\le N} |S_n|
      > \frac{\Delta}{2\stepsize\sigma}
    \right)
  \le \frac{N}{(\Delta/(2\stepsize\sigma))^2}
  = O(\sigma^2),
\]
which tends to $0$ as $\sigma\to 0$. Hence
\[
  \PP(T_\eps^{(\sigma)} > N)\longrightarrow 1
  \quad \text{as } \sigma\to 0.
\]

\paragraph{Large-noise regime \texorpdfstring{$\sigma\to\infty$}{sigma→∞}.}
Fix $N>0$. By a union bound,
\[
  \PP(T_\eps^{(\sigma)} \le N)
  \le \sum_{k=1}^N \PP\bigl(|w_k^{(\sigma)}|\le \eps\bigr).
\]
Let $\cF_{k-1}$ be the natural filtration up to time $k-1$. Since
\[
  w_k^{(\sigma)}
  = w_{k-1}^{(\sigma)}
    - \stepsize\sign\bigl(w_{k-1}^{(\sigma)}\bigr)
    - \stepsize\sigma\xi_k,
\]
we have, conditionally on $\cF_{k-1}$,
\[
  w_k^{(\sigma)} \big| \cF_{k-1}
  \sim \mathcal N\left(
    w_{k-1}^{(\sigma)} - \stepsize\sign(w_{k-1}^{(\sigma)}),
    \ \stepsize^2\sigma^2
  \right).
\]
Thus, almost surely, the conditional probability $\PP\bigl(|w_k^{(\sigma)}|\le \eps \mid \cF_{k-1}\bigr)$ 
(a random variable conditioned on the past up to time $k-1$) can be expressed in terms of the Gaussian density as
\[
  \PP\bigl(|w_k^{(\sigma)}|\le \eps \mid \cF_{k-1}\bigr)
  = \int_{-\eps}^{\eps}
    \frac{1}{\stepsize\sigma\sqrt{2\pi}}
    \underbrace{\exp\left(-\frac{(y-m)^2}{2\stepsize^2\sigma^2}\right)}_{\le 1} dy
  \le \frac{2\eps}{\stepsize\sigma\sqrt{2\pi}},
\]
where $m=w_{k-1}^{(\sigma)}-\stepsize\sign(w_{k-1}^{(\sigma)})$ is random. 
Taking expectations yields
\[
  \PP\bigl(|w_k^{(\sigma)}|\le \eps\bigr)
  \le \frac{2\eps}{\stepsize\sigma\sqrt{2\pi}}.
\]
Hence the result:
\[
  \PP(T_\eps^{(\sigma)} \le N)
  \le N\frac{2\eps}{\stepsize\sigma\sqrt{2\pi}}
  \xrightarrow[\sigma\to\infty]{} 0.
\]

\section{Experiments on Noisy Sign Dynamics}

\subsection{1D toy model experiments}
\label{app:noisy-sign-xps}
\Cref{fig:median-T} has been generated as follows. 
We simulate \eqref{eq:noisy-sign} 
with i.i.d.\ standard Gaussian noise $\xi_t\sim\mathcal{N}(0,1)$ and parameters 
\[
  \stepsize = 0.1,\qquad
  \eps = \frac{\stepsize}{5},\qquad
  \sv_0 = 10\stepsize + 1.3\eps.
\]
The choice of $\sv_0$ ensures that the exact sign dynamics never hit $[-\eps,\eps]$.
For each noise level $\sigma$ on a logarithmic grid (from $10^{-3}$ to $10$) 
we run $n_{\mathrm{samples}}=10^4$ independent trajectories, 
cap the simulation at $n_{\max}=1000$ steps, and measure 
$\tilde T\coloneqq\min(T_\eps^{(\sigma)}, n_{\max})$. 
We compute the median of $\tilde T$ over the $n_{\mathrm{samples}}$ trajectories, 
and its $[2.5\%, 97.5\%]$ empirical quantile interval.
These are plotted in \Cref{fig:median-T} (the quantile interval is shaded in light blue). 
\subsection{Newton--Schulz approximate error vs.\ exact projection on controlled-spectrum quadratics}
\label{app:controlled-spectrum-ns}

\Cref{tab:ns-vs-exact-lossratios} complements \Cref{tab:ns-vs-exact-win} by reporting 
the best-loss ratios for \Muon{} with Newton--Schulz projection (no momentum) 
vs.\ \Muon{} with exact projection (no momentum) 
on the same controlled-spectrum quadratic instances as in \Cref{tab:ns-vs-exact-win}.
It shows that Newton--Schulz \Muon{} attains a loss within a factor $2$ 
of exact-projection \Muon{} on most spectrum families and time horizons, 
except on \texttt{min\_spiked}, where Newton-Schulz is significantly better.

\begin{table}[H]
\centering
\small
\setlength{\tabcolsep}{3pt}
\renewcommand{\arraystretch}{1.15}
\caption{ Mean $\pm$ 1.96 standard error (over initializations) of best-loss for \Muon{} with exact projection, divided by best-loss for \Muon{} with Newton--Schulz projection (no momentum, constant step size). 
Values $>1$ favor Newton--Schulz.  
This can be read as ``Newton--Schulz \Muon{} is better by a factor of \texttt{value} on average''.}
\label{tab:ns-vs-exact-lossratios}
\begin{tabular}{lccc}
\toprule
kind & $t=T/10$ & $t=T/2$ & $t=T$ \\
\midrule
\texttt{max\_spiked} & $0.61 \pm 0.016488$ & $0.48 \pm 0.025829$ & $1.15 \pm 0.020648$ \\
\texttt{min\_spiked} & $1.32 \pm 0.275357$ & $8.58 \pm 3.246216$ & $33.18 \pm 8.962830$ \\
\texttt{uniform} & $0.36 \pm 0.003587$ & $0.71 \pm 0.012810$ & $0.71 \pm 0.003194$ \\
\texttt{gaussian} & $0.40 \pm 0.003579$ & $0.55 \pm 0.023125$ & $0.81 \pm 0.005050$ \\
\texttt{linear\_decay\_to\_max} & $0.33 \pm 0.002264$ & $0.82 \pm 0.004592$ & $0.78 \pm 0.003515$ \\
\texttt{u\_shaped} & $0.46 \pm 0.003740$ & $0.42 \pm 0.012617$ & $0.71 \pm 0.004944$ \\
\texttt{geometric\_decay\_to\_max} & $0.58 \pm 0.007069$ & $0.21 \pm 0.006506$ & $0.83 \pm 0.008606$ \\
\bottomrule
\end{tabular}
\end{table}

\section{Exact line search on a quadratic: formulas and additional plots}
\label{app:exact-1d-line-search}

This appendix records the exact line-search expressions used in \Cref{sec:line-search} and provides additional trajectory-level plots for the same quadratic instance.

\paragraph{Quadratic model.}
We consider the quadratic loss
\begin{equation}
\loss(W)
\;=\;
\frac12 \langle W, A W\rangle,
\qquad A \succ 0,
\label{eq:quad-app}
\end{equation}
with Frobenius inner product $\langle X,Y\rangle=\tr(X^\top Y)$.
The unique minimizer is $W_\star=0$, and the gradient is
\begin{equation}
\nabla \loss(W) = A W .
\label{eq:grad-app}
\end{equation}

\paragraph{Exact line search along a direction.}
Fix an iterate $W$ and a direction $D$.
Along the ray $W-\alpha D$,
\begin{equation}
\loss(W-\alpha D)
=
\loss(W)
-
\alpha \langle \nabla \loss(W), D\rangle
+
\frac12 \alpha^2 \langle D, A D\rangle .
\label{eq:quad-line}
\end{equation}
Since $A\succ 0$, we have $\langle D, A D\rangle>0$ for any $D\neq 0$, and the exact line-search minimizer over $\alpha\in\R$ is
\begin{equation}
\alpha^\star(D)
=
\frac{\langle \nabla \loss(W), D\rangle}{\langle D, A D\rangle}.
\label{eq:alpha-star}
\end{equation}
Plugging \eqref{eq:alpha-star} into \eqref{eq:quad-line} yields the exact one-step decrease
\begin{equation}
\Delta(D)
=
\loss(W)-\loss(W-\alpha^\star(D)D)
=
\frac{\langle \nabla \loss(W), D\rangle^2}{2\langle D, A D\rangle}.
\label{eq:delta-app}
\end{equation}

\paragraph{Directions and greedy policy.}
As in \Cref{sec:line-search}, we compare:
\[
D_{\rm GD}(W)=\nabla \loss(W),
\qquad
D_{\rm St}(W)=\proj(\nabla \loss(W)),
\]
where $\proj(G)=UV^\top$ denotes the polar factor of $G=U\Svnn V^\top$.
The greedy policy selects, at each iterate, 
\[
D_{\rm greedy}(W)=
\begin{cases}
D_{\rm GD}(W), & \text{if }\Delta(D_{\rm GD}(W))\ge \Delta(D_{\rm St}(W)),\\
D_{\rm St}(W), & \text{otherwise.}
\end{cases}
\]
and then takes the corresponding (exact line-search) step.
In the experiments reported in \Cref{sec:line-search}, the greedy policy always selects the Stiefel direction along its trajectory,
even though the \GD{} trajectory is far better overall.

\paragraph{Additional trajectory-level plots.}
\Cref{fig:counterexample-quad-grad-dist} reports the gradient norm
$\|\nabla \loss(W_t)\|_{\mathrm{F}}$ and the distance to the minimizer
$\|W_t-W_\star\|_{\mathrm{F}}$
for the same quadratic instance as in \Cref{fig:counterexample-quad-main}.
Consistent with the objective gap results in \Cref{fig:counterexample-quad-main},  
the one-step superiority of the greedy policy does not translate 
into better end-to-end convergence in terms of these metrics either.

\begin{figure}[H]
  \centering
  \includegraphics[width=0.48\textwidth]{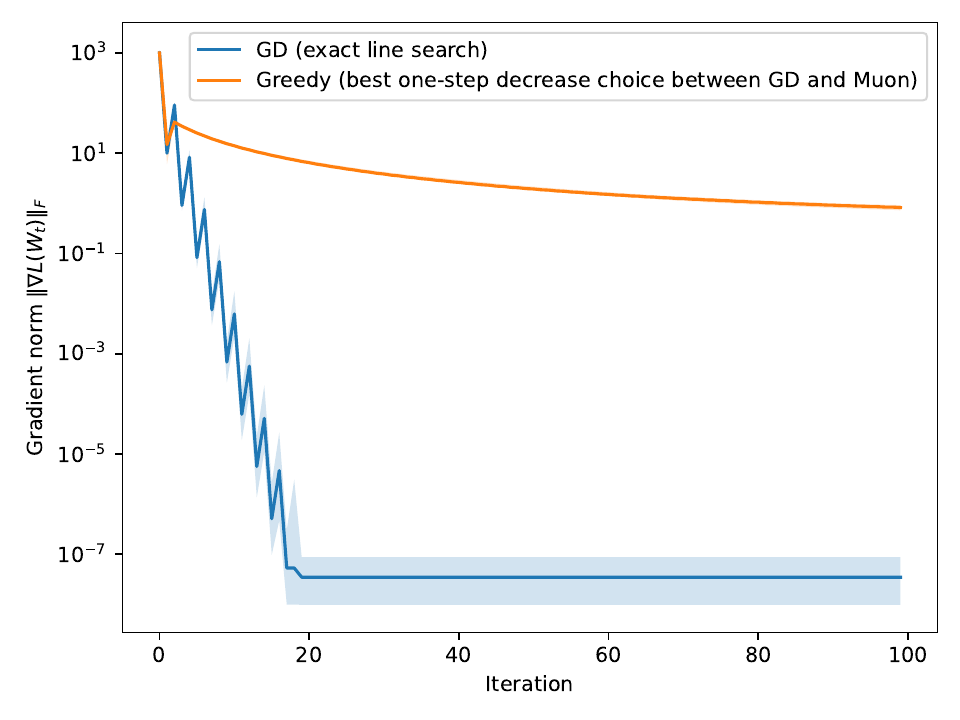}
  \hfill
  \includegraphics[width=0.48\textwidth]{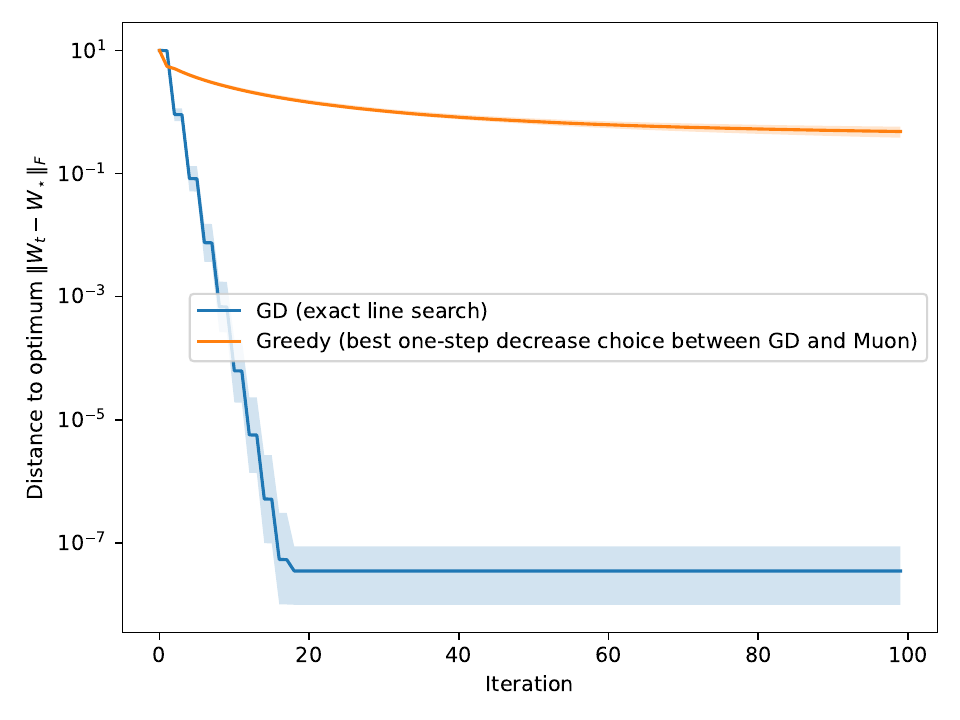}
  \caption{
    Gradient norm $\|\nabla \loss(W_t)\|_{\mathrm{F}}$ (left) and distance to the minimizer $\|W_t-W_\star\|_{\mathrm{F}}$ (right)
    for the same quadratic instance as in \Cref{fig:counterexample-quad-main}.
    Results are averaged over $100$ random initializations. 
    Shaded bands contain the central $95\%$ of runs.
  }
  \label{fig:counterexample-quad-grad-dist}
\end{figure}

\end{document}